\newtheorem{theorem}{Theorem}[section]
\newtheorem{conjecture}{Conjecture}
\newtheorem{proposition}[theorem]{Proposition}
\newtheorem{lemma}[theorem]{Lemma}
\newtheorem{corollary}[theorem]{Corollary}
\newtheorem{fact}{Fact}
\newtheorem{property}{Property}
\newtheorem*{gallai-conj}{Gallai's Conjecture (1966)}
\newtheorem*{hajos-conj}{Hajós's Conjecture (1968)}
\newcommand{\pn}{{\rm pn}}
\newcommand{\cn}{{\rm cn}}
\newcommand{\D}{\mathcal{D}}
\newcommand{\lift}[1]{\widehat{#1}}
\newcommand{\floor}[1]{\lfloor #1 \rfloor}
\tikzset{emp/.style={double distance = 0.3ex}}
\tikzset{oriented/.style={->,shorten >= 1.5pt}}
\def\D{\mathcal{D}}
\DeclareFontFamily{U}{tipa}{}
\DeclareFontShape{U}{tipa}{m}{n}{<->tipa10}{}
\newcommand{\arc@char}{{\usefont{U}{tipa}{m}{n}\symbol{62}}}%
\newcommand{\arc}[1]{\mathpalette\arc@arc{#1}}
\newcommand{\arc@arc}[2]{%
  \sbox0{$\m@th#1#2$}%
  \vbox{
    \hbox{\resizebox{\wd0}{\height}{\arc@char}}
    \nointerlineskip
    \box0
  }%
}
\newcounter{claimcount}
\newenvironment{claim}{\medskip\refstepcounter{claimcount}\noindent\textbf{Claim \arabic{claimcount}.}}{}
\newcommand{\Note}[2][]
{}
\title{On Gallai's and Haj\'os' Conjectures for graphs with treewidth at most $3$}
\author{F. Botler\textsuperscript{1} \hspace{.5cm} M. Sambinelli\textsuperscript{2} \hspace{.5cm}
	R. S. Coelho\textsuperscript{3} \hspace{.5cm} O. Lee\textsuperscript{2}\\
	{\footnotesize \textsuperscript{1}Facultad de Ciencias F\'isicas y Matematicas}\vspace{-.2cm}\\
	{\footnotesize Universidad de Chile}\\
	{\footnotesize \textsuperscript{2}Instituto de Computação}\vspace{-.2cm}\\
	{\footnotesize Universidade Estadual de Campinas}\\
	{\footnotesize \textsuperscript{3}Instituto Federal do Norte de Minas Gerais}}
\begin{document}

\maketitle

\begin{abstract}
	A path (resp. cycle) decomposition of a graph \(G\) is a set of edge-disjoint paths (resp. cycles) of \(G\)
	that covers the edge set of \(G\).
	Gallai (1966) conjectured that every graph on \(n\) vertices admits a path decomposition
	of size at most \(\lfloor (n+1)/2\rfloor\),
	and Haj\'os (1968) conjectured that every Eulerian graph on \(n\) vertices admits a cycle decomposition
	of size at most \(\lfloor (n-1)/2\rfloor\).
	Gallai's Conjecture was verified for many classes of graphs.
	In particular,
	Lovász (1968) verified this conjecture for graphs with at most one vertex of even degree,
	and Pyber (1996) verified it for graphs in which every cycle contains a vertex of odd degree.
	Haj\'os' Conjecture, on the other hand, was verified only for graphs with maximum degree~\(4\)
	and for planar graphs.
	In this paper, we verify Gallai's and Haj\'os' Conjectures for graphs with treewidth at most $3$.
	Moreover, we show that the only graphs with treewidth at most~\(3\)
	that do not admit a path decomposition of size at most \(\lfloor n/2\rfloor\)
	are isomorphic to \(K_3\) or \(K_5-e\).
	Finally, we use the technique developed in this paper to present
	new proofs for Gallai's and Haj\'os' Conjectures for graphs with maximum degree at most~\(4\),
	and for planar graphs with girth at least~\(6\).

\end{abstract}

\let\thefootnote\relax\footnote{
This research has been partially supported by CNPq
Projects (Proc. 477203/2012-4 and {456792/2014-7}), Fapesp Project
(Proc. 2013/03447-6).
F. Botler is partially supported by CAPES (Proc. 1617829),
Millenium Nucleus Information and Coordination in Networks (ICM/FIC RC 130003),
and CONICYT/FONDECYT/POSTDOCTORADO 3170878.
M. Sambinelli is supported by CNPq (Proc. 141216/2016-6).
O. Lee is supported by CNPq (Proc. 311373/2015-1 and 477692/2012-5).
  e-mails:
  fbotler@ime.usp.br (F. Botler),
  msambinelli@ic.unicamp.br (M. Sambinelli),
  rcoelho@ime.usp.br (R. S. Coelho),
  lee@ic.unicamp.br (O. Lee)}

\section{Introduction}\label{sec:introduction}

In this paper, all graphs considered are simple, i.e., contain no loops or multiple edges.
A \emph{decomposition} \(\D\) of a graph \(G\) is a set \(\{H_1,\ldots,H_k\}\)
of edge-disjoint subgraphs of \(G\) that cover the edge set of \(G\).
We say that \(\D\) is a \emph{path} (resp. \emph{cycle}) \emph{decomposition} if~\(H_i\) is a path (resp. cycle) for \(i=1,\ldots,k\).
We say that a path (resp. cycle) decomposition~\(\D\) of a graph (resp. an Eulerian graph)~\(G\) is \emph{minimum}
if for any path (resp. cycle) decomposition~\(\D'\) of~\(G\) we have \(|\D| \leq |\D'|\).
The size of a minimum path (resp. cycle) decomposition
is called the \emph{path number} (resp. \emph{cycle number}) of \(G\),
and is denoted by \(\pn(G)\) (resp. \(\rm{cn}(G)\)).
In this paper, we focus in the following conjectures concerning minimum path and cycle numbers of graphs
(see \cite{Bondy14,Lovasz68}).

\begin{conjecture}[Gallai, 1966]\label{conj:gallai}
	If \(G\) is a connected graph,
	then \(\pn(G)\leq \big\lfloor \frac{|V(G)|+1}{2} \big\rfloor\).
\end{conjecture}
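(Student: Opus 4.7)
The plan is to proceed by strong induction on $n = |V(G)|$, combining three ingredients that together subsume the known partial results on this conjecture: direct removal of low-degree vertices, Lov\'asz's decomposition theorem followed by a cycle-absorption step, and identification of reducible local configurations. The base cases for small $n$ are immediate by inspection. For the inductive step, write $T_o(G)$ for the set of odd-degree vertices; since every path in a decomposition of a graph with at least one edge contributes two odd-degree endpoints to $G$, any path decomposition has size at least $|T_o(G)|/2$, so the upper bound $\lfloor (n+1)/2\rfloor$ becomes tightest precisely when $|T_o(G)|$ is large.

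I would first dispose of the case $\delta(G)\le 2$. If $v$ has degree $1$, apply induction to the connected components of $G-v$ and append the pendant edge to the path ending at its neighbour, or open a new one-edge path without exceeding the budget. If $v$ has degree $2$, suppress $v$ (or delete $v$ when suppression would create a multi-edge or disconnect $G$), apply induction, and re-insert the two incident edges by concatenation or by splitting them between two existing paths, tracking the parity change carefully. For $\delta(G)\ge 3$ with $|T_o(G)|$ large, Lov\'asz's theorem already yields a decomposition into at most $\lfloor n/2\rfloor$ paths and cycles; if every cycle in that decomposition meets $T_o(G)$, each such cycle can be opened at a vertex of $T_o(G)$ and absorbed into the incident path, producing a pure path decomposition of the required size. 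This subsumes Pyber's theorem.

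The main obstacle is the residual case: $\delta(G)\ge 3$, $|T_o(G)|$ is very small or zero, and no cycle produced by the Lov\'asz step is accessible from $T_o(G)$. Here the only available strategy is to locate a \emph{reducible configuration}, that is, a subgraph $H\subseteq G$ such that $G - E(H)$ admits, by induction, a path decomposition with a prescribed endpoint set which can then be extended across $E(H)$ without increasing the total count. No universal choice of $H$ is known: every published attack on Gallai's Conjecture, including the one developed in this paper, invokes an auxiliary structural hypothesis (bounded treewidth, bounded maximum degree, or planarity with large girth) that guarantees such a canonical $H$ exists. I therefore expect this last case to be where any plan for the full conjecture genuinely stalls, and its resolution to require a new structural insight rather than a sharper amalgamation of the three ingredients above.
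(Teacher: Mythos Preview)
This statement is Gallai's Conjecture, which the paper lists as an \emph{open conjecture} and does not prove. The paper's contribution is to verify it only for restricted classes (partial $3$-trees, maximum degree~$4$, planar graphs of girth~$\ge 6$), and you yourself concede in your final paragraph that your plan ``genuinely stalls'' in the residual case of minimum degree~$\ge 3$ with few or no odd-degree vertices. So what you have written is not a proof but an honest survey of the standard attack lines, ending in exactly the impasse that keeps the conjecture open.

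Even before the acknowledged impasse, the reductions you describe for $\delta(G)\le 2$ are not airtight. For a vertex $v$ of degree~$1$ with neighbour $u$: if $d_{G-v}(u)$ is even, no path in a minimum decomposition of $G-v$ need end at $u$, and ``opening a new one-edge path'' adds one to the count; when $n$ is even this overshoots $\lfloor(n+1)/2\rfloor=n/2$. Similarly, the Lov\'asz-plus-absorption step does not quite ``subsume Pyber's theorem'': Lov\'asz gives $\lfloor n/2\rfloor$ paths and cycles, but opening a cycle at an odd vertex and ``absorbing it into the incident path'' is only guaranteed to preserve the path count if that odd vertex is actually an \emph{endpoint} of some path in the decomposition, which is precisely the content of the careful argument Pyber supplies and you do not reproduce. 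These are not fatal to the overall picture you paint---they are all repairable in the restricted settings where the conjecture has been proved---but they do show that the ``easy'' half of your induction already requires more work than you indicate, and the ``hard'' half remains, by your own admission, unresolved.
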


\begin{conjecture}[Haj\'os, 1968]\label{conj:hajos}
	If \(G\) is an Eulerian graph,
	then \(\cn(G)\leq \big\lfloor \frac{|V(G)|-1}{2} \big\rfloor\).
\end{conjecture}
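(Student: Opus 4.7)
The plan is to proceed by induction on the number of edges of $G$, aiming to show $\cn(G) \leq \lfloor (n-1)/2 \rfloor$ where $n = |V(G)|$. The base case is when $G$ is a single cycle, in which case $\cn(G) = 1 \leq \lfloor (n-1)/2 \rfloor$ for all $n \geq 3$. For the inductive step, the central idea is to extract a cycle $C$ whose removal yields an Eulerian graph $G' = G - E(C)$ that is either (i) strictly smaller in vertex count (because $C$ isolates at least one vertex) or (ii) amenable to a decomposition that can absorb $C$ without adding to the count, by merging $C$ with another cycle of $G'$ at a shared vertex into a single closed trail that further decomposes favourably.

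First, I would dispose of low-degree reductions. If $G$ contains a vertex $v$ of degree $2$, suppressing $v$ produces a smaller Eulerian (multi)graph on $n-1$ vertices; parallel edges arising from suppression are peeled off as $2$-cycles (or as triangles after a further local move), and the inductive hypothesis on the reduced graph lifts back to a decomposition of $G$ of the desired size. More generally, any cut vertex or $2$-edge-cut splits $G$ into Eulerian blocks whose decompositions combine linearly, and a short inductive bookkeeping on the parts gives the bound. This should reduce the problem to the case where $G$ is essentially $4$-edge-connected with minimum degree at least $4$.

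The heart of the argument would then be a structural lemma guaranteeing, in any such $G$, the existence of a cycle $C$ that either passes through a degree-$4$ vertex using both of its edge-pairs (isolating it in $G'$ and giving case (i)) or that can be chosen together with a second cycle $C'$ of $G'$ sharing a vertex $w$, so that rerouting at $w$ replaces $\{C,C'\}$ by a single cycle in a suitable subdecomposition. The slack between the target $\lfloor (n-1)/2 \rfloor$ and the naive estimate $\lfloor n/2 \rfloor$ is precisely the budget that such merges must repay; controlling this budget is the standard mechanism behind the known partial results for maximum degree $4$ and for planar graphs.

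The main obstacle — and the reason this conjecture remains open in full generality — is the case of dense, highly symmetric Eulerian graphs with large minimum degree and no short cycles whose removal yields useful reductions, the extremal examples being odd complete graphs $K_n$, which attain the bound via a Hamiltonian decomposition and therefore leave no room for error. In such graphs no single cycle removal produces either an isolated vertex or an obvious merging opportunity, and one must instead identify a globally coordinated collection of cycles that jointly account for the vertex reduction. I expect the key difficulty to lie in formulating an invariant (perhaps involving the parity of degrees in $G'$ or the number of degree-$2$ vertices created by $C$) that remains strong enough to drive induction on all $4$-edge-connected Eulerian graphs, rather than only on the structurally restricted classes handled to date.
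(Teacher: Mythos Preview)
The statement you are attempting to prove is a \emph{conjecture}; the paper does not prove it in general and explicitly notes that it has been verified only for graphs with maximum degree~\(4\) and for planar graphs, adding in this paper the case of treewidth at most~\(3\). There is therefore no ``paper's own proof'' to compare against.

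Your proposal is not a proof but a strategy outline, and you yourself correctly identify where it breaks down. The ``structural lemma'' you posit in the inductive step --- that in any \(4\)-edge-connected Eulerian graph of minimum degree at least~\(4\) one can always find a cycle \(C\) that either isolates a vertex or admits a merging that saves one cycle in the count --- is precisely the missing ingredient, and no such lemma is known. As you note, odd complete graphs meet the bound exactly, so any inductive reduction must be tight, leaving no slack to absorb a weak step. The reductions you describe for degree-\(2\) vertices, cut vertices, and \(2\)-edge-cuts are standard and appear (in various forms) in the paper's proofs for the restricted classes, but they do not by themselves reach the general case. In short, your write-up is an honest assessment of the state of the problem rather than a proof; the gap is the unproven structural lemma, which is essentially a restatement of the conjecture's core difficulty.
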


In 1968, Lov\'asz~\cite{Lovasz68} proved that a graph with \(n\) vertices can be decomposed
into at most \(\lfloor n/2\rfloor\) paths and cycles.
The next theorem is a directed consequence of this result.

\begin{theorem}[Lov\'asz, 1968]\label{thm:lovasz}
	If \(G\) is a graph with \(n\) vertices and contains at most one vertex of even degree,
	then \(\pn(G) = \lfloor n/2\rfloor\).
\end{theorem}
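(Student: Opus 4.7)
The plan is to derive the theorem directly from Lov\'asz's decomposition of every $n$-vertex graph into at most $\lfloor n/2\rfloor$ paths and cycles, by pairing that upper bound with a parity-based lower bound that forces the cycles to vanish.

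The key ingredient is an endpoint-counting lemma: in any decomposition $\mathcal{D}$ of $G$ into paths and cycles, if $p_v$ denotes the number of paths of $\mathcal{D}$ having $v$ as an endpoint, then $\deg_G(v) \equiv p_v \pmod{2}$. This holds because each cycle contributes $0$ or $2$ to $\deg_G(v)$, while each path contributes $0$, $1$, or $2$, with the odd contribution occurring exactly when $v$ is an endpoint. Consequently, every odd-degree vertex of $G$ must be an endpoint of at least one path of $\mathcal{D}$.

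Now I would count the odd-degree vertices of $G$. Since that number is always even and $G$ has at most one vertex of even degree, $G$ has exactly $2\lfloor n/2\rfloor$ odd-degree vertices ($n$ if $n$ is even, $n-1$ if $n$ is odd). Since each path has two endpoints, the lemma yields that any decomposition of $G$ into paths and cycles contains at least $\lfloor n/2\rfloor$ paths. Applied to a path decomposition this gives $\pn(G) \geq \lfloor n/2\rfloor$. For the reverse inequality, I would invoke Lov\'asz's theorem to obtain a decomposition $\mathcal{D}$ with $|\mathcal{D}| \leq \lfloor n/2\rfloor$; combining the two bounds forces $\mathcal{D}$ to consist of exactly $\lfloor n/2\rfloor$ paths and no cycles, which gives $\pn(G) \leq \lfloor n/2\rfloor$ as well.

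The subtle step, and what makes the deduction truly ``direct'', is the parity argument in the presence of cycles, which allows Lov\'asz's mixed decomposition to collapse automatically into a pure path decomposition. Without it one would need an ad hoc procedure to remove cycles, and such a procedure is awkward because concatenating a cycle with a path at a common internal vertex does not in general produce a simple path. With the parity observation in place, no such surgery is needed.
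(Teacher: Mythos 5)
Your proof is correct, and it supplies exactly the argument the paper leaves implicit: the paper states this theorem without proof as a ``direct consequence'' of Lov\'asz's decomposition of any $n$-vertex graph into at most $\lfloor n/2\rfloor$ paths and cycles, and your parity/endpoint-counting lemma is the standard way to make that deduction rigorous. No gaps; the lower bound $p_v\equiv \deg_G(v)\pmod 2$ and the count of $2\lfloor n/2\rfloor$ odd-degree vertices together force the mixed decomposition to be a path decomposition of the claimed size.
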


Pyber~\cite{Pyber96} extended Theorem~\ref{thm:lovasz} as follows.

\begin{theorem}[Pyber, 1996]\label{thm:pyber}
	If \(G\) is a graph with \(n\) vertices in which every cycle contains a vertex of odd degree,
	then \(\pn(G)\leq\lfloor n/2\rfloor\).
\end{theorem}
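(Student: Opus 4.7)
The plan is to establish Theorem~\ref{thm:pyber} by strengthening the conclusion of Theorem~\ref{thm:lovasz} in a way already implicit in Lov\'asz's proof. The strengthened statement is: every graph $G$ on $n$ vertices admits a decomposition into at most $\lfloor n/2\rfloor$ paths and cycles in which every cycle in the decomposition lies inside the subgraph induced by the vertices of even degree in $G$. Once this is in hand, the hypothesis of Theorem~\ref{thm:pyber}---that every cycle of $G$ contains a vertex of odd degree---forces any such decomposition to contain no cycles at all, so what remains is a path decomposition of size at most $\lfloor n/2\rfloor$.

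To extract the strengthening, I would argue by an extremal choice. Let $\mathcal{D}$ be a decomposition of $G$ into at most $\lfloor n/2\rfloor$ paths and cycles, chosen so that first the total size and then the number of cycles passing through a vertex of odd degree are minimum. Suppose, for contradiction, that some cycle $C \in \mathcal{D}$ contains a vertex $v$ with $\deg_G(v)$ odd. A local parity count at $v$---each cycle of $\mathcal{D}$ through $v$ and each path of $\mathcal{D}$ using $v$ internally contribute $2$ to $\deg_G(v)$, while each path ending at $v$ contributes $1$---forces the number of paths of $\mathcal{D}$ having $v$ as an endpoint to be odd, so at least one such path $P \in \mathcal{D}$ exists.

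The crux is then a surgery that replaces $\{P,C\}$ in $\mathcal{D}$ by at most two paths, contradicting the extremality. When $V(P) \cap V(C) = \{v\}$, splitting $C$ at $v$ into two arcs $C_1,C_2$ and replacing $\{P,C\}$ by $\{P \cup C_1,\, C_2\}$ produces two paths, and the cycle count strictly drops. The main obstacle---and the heart of Pyber's theorem---is the general case where $V(P)\cap V(C)$ contains a vertex other than $v$: the naive concatenation then visits some vertex twice and fails to be a simple path. To handle this, I would pick the vertex $u$ of $V(P)\cap V(C)$ closest to the free endpoint $w$ of $P$ along $P$, so that the prefix $P_1$ of $P$ from $w$ to $u$ meets $V(C)$ only at $u$; splitting $C$ at $u$ into arcs $C_1,C_2$, the concatenation $P_1\cup C_1$ is then a simple path. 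The remaining piece $P_2 \cup C_2$ (with $P_2$ the suffix of $P$) is a strictly simpler subgraph, either already a path or a shorter cycle on which the same analysis can be recursed. The most delicate part will be verifying that this recursive surgery terminates and never inflates the size of the decomposition, which is precisely where the assumption that every cycle of $G$ meets an odd-degree vertex must be invoked to force progress at each step.
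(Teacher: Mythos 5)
The paper does not prove this statement at all: Theorem~\ref{thm:pyber} is quoted from the literature with a citation to \cite{Pyber96} and is used later as a black box (e.g.\ at the end of the proof of Theorem~\ref{theorem:treewidth-most-three}). So there is no internal proof to compare against; your proposal must stand on its own as a proof of Pyber's theorem.

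Your high-level reduction is the correct one and is indeed how the theorem is obtained: strengthen Theorem~\ref{thm:lovasz} to a decomposition into at most $\lfloor n/2\rfloor$ paths and cycles in which every cycle lies in the subgraph induced by the even-degree vertices of $G$, and observe that the hypothesis then forbids any cycle in the decomposition. The parity count at an odd-degree vertex $v$ (cycles and internal passages of paths contribute $2$ each, path-ends contribute $1$, so an odd number of paths of $\mathcal{D}$ end at $v$) is also correct. The genuine gap is at the surgery step, which is precisely the content of Pyber's theorem. A cycle cannot be ``split at $u$ into two arcs'': deleting one edge $uy$ of $C$ at $u$ yields a single $u$--$y$ path $C_1$ plus the edge $uy$, and while $P_1\cup C_1$ is a path (since $P_1$ meets $C$ only at $u$), the leftover $P_2+uy$ is a path only when $y\notin V(P_2)$; in general it is neither a path nor a cycle (it may be a cycle with a pendant path attached, or worse), so it cannot be returned to $\mathcal{D}$, and your extremal framework --- which only speaks of paths and cycles --- gives you nothing to recurse on. Moreover, nothing in the sketch establishes that the proposed recursion terminates, that $|\mathcal{D}|$ never increases, or that the number of offending cycles strictly drops; you acknowledge this but do not resolve it, and this is exactly where Pyber's published argument does several pages of nontrivial global work rather than a local one-step exchange. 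As written, the proposal is a correct plan whose key lemma remains unproven, so it does not yet constitute a proof of the statement.
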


In 2005, Fan~\cite{Fan05} extended Theorem~\ref{thm:lovasz} even more, but Conjecture~\ref{conj:gallai} is still open.
Recently, one of the authors~\cite{BotlerJimenez2015+} verified Conjecture~\ref{conj:gallai}
for a family of even regular graphs with high girth condition,
and Jim\'enez and Wakabayashi~\cite{JimenezWakabayashi14+} verified it
for a family of triangle-free graphs.
For more results concerning Conjecture~\ref{conj:gallai},
we refer the reader to~\cite{DeKo00,Donald80,FavaronKouider88,GengFangLi15}.
Although these conjectures seems very similar,
Conjecture~\ref{conj:hajos} was only verified
for graphs with maximum degree~\(4\)~\cite{GranvilleMoisiadis87}
and for planar graphs~\cite{Seyffarth92}.

The technique presented in this paper showed to be useful
to deal with both Conjectures~\ref{conj:gallai} and~\ref{conj:hajos}.
Let \(G\) be a counter-example for Conjecture~\ref{conj:gallai}
with a minimum number of vertices.
Our technique consists in finding a subgraph \(H\) of \(G\)
such that, for some positive integer \(r\),
the graph \(G'=G-E(H)\) contains at most \(|V(G)|-2r\) non-isolated vertices, and \(\pn(H)\leq r\).
Moreover, we show how to obtain \(H\) in such a way that for every component~\(C'\) of~\(G'\)
we have \(\pn(C') \leq \lfloor |V(C')|/2\rfloor\).
Therefore, the decomposition~\(\D\) of \(G\) obtained by joining minimum path
decompositions of \(H\) and \(G'\)
is such that \(|\D|\leq\lfloor |V(G)|/2\rfloor\).
The graph~\(H\) is called an \emph{\(r\)-reducing subgraph} and is discussed in Section~\ref{sec:reducing}.
As a byproduct of our main result (Theorem~\ref{theorem:treewidth-most-three}),
the only graphs with treewidth at most \(3\) and path number exactly \((n+1)/2\) are isomorphic to
\(K_3\) and \(K_5^-\) (the graph obtained from \(K_5\) by removing exactly one edge).
For Conjecture~\ref{conj:hajos} the procedure is analogous.

The main contributions of this paper are the following.
We verify Conjecture~\ref{conj:gallai} for graphs with treewidth at most \(3\),
graphs with maximum degree at most \(4\),
and planar graphs with girth at least~\(6\).
In fact, for all these cases, we prove the Conjecture~\ref{conj:strong-gallai}, which is a strengthening of Conjecture~\ref{conj:gallai}.
Also, we verify Conjecture~\ref{conj:hajos} for graphs with treewidth at most \(3\)
and present a new proof for the case of graphs with maximum degree at most~\(4\).
\begin{conjecture}\label{conj:strong-gallai}
	Let \(G\) be a connected graph with \(n\) vertices.
	If \(|E(G)| \leq (n-1)\big\lfloor{n}/{2} \big\rfloor\), then \(\pn(G) \leq \big\lfloor {n}/{2} \big\rfloor\).
        Otherwise, \(pn(G) = \big\lceil {n}/{2} \big\rceil\).
\end{conjecture}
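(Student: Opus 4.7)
The plan is to attack Conjecture~\ref{conj:strong-gallai} by strong induction on \(n=|V(G)|\), combining a direct counting argument for the dense case with the \emph{reducing subgraph} technique advertised in the introduction for the sparse case. In the dense regime \(|E(G)|>(n-1)\lfloor n/2\rfloor\), every path of \(G\) has at most \(n-1\) edges, so any path decomposition has size at least \(\lceil|E(G)|/(n-1)\rceil\geq\lfloor n/2\rfloor+1=\lceil n/2\rceil\); combined with Gallai's upper bound \(\pn(G)\leq\lceil n/2\rceil\) (accumulated along the induction), this forces equality. The substantive task is the sparse regime, where one must sharpen Gallai's ceiling bound and prove \(\pn(G)\leq\lfloor n/2\rfloor\).

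For the sparse case, I would seek inside \(G\) an \(r\)-reducing subgraph \(H\): a subgraph with \(\pn(H)\leq r\) such that \(G'\Def G-E(H)\) has at most \(n-2r\) non-isolated vertices and every component \(C'\) of \(G'\) is again a sparse instance, so that induction yields \(\pn(C')\leq\lfloor|V(C')|/2\rfloor\). Pasting the decompositions of \(H\) and of the components of \(G'\) then gives
\[
\pn(G)\leq r+\sum_{C'}\left\lfloor\frac{|V(C')|}{2}\right\rfloor\leq r+\left\lfloor\frac{n-2r}{2}\right\rfloor=\left\lfloor\frac{n}{2}\right\rfloor.
\]
Natural candidates for \(H\) are short paths hanging off a low-degree vertex, a pair of edges at a vertex of degree \(2\), or small trees sitting on a bounded separator; the precise choice depends on the local structure of \(G\).

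The main obstacle, and the reason the conjecture remains open in full generality, is producing such an \(H\) in every graph while controlling the residues. Two pitfalls must be avoided: a component \(C'\) of \(G'\) could fall in the exceptional dense regime on which the sparse bound fails (the authors' main theorem identifies these as \(K_3\) and \(K_5^-\) in the treewidth-\(3\) setting), or removing \(E(H)\) could leave small components whose edge counts cross the \((|V(C')|-1)\lfloor|V(C')|/2\rfloor\) threshold. The choice of \(H\) must therefore be coupled with a local analysis ruling out these bad residues, which is tractable under bounded treewidth, bounded maximum degree, or a high-girth planarity hypothesis---precisely the settings in which the paper establishes Conjecture~\ref{conj:strong-gallai}. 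A base case listing, by inspection, the small extremal graphs \(K_3\) and \(K_5^-\) with explicit decompositions, completes the scheme.
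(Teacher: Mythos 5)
The statement you are proving is labelled a \emph{conjecture} in the paper, and the paper contains no proof of it in general: the authors only \emph{verify} it for restricted classes (partial $3$-trees in Theorem~\ref{theorem:treewidth-most-three}, maximum degree $4$ in Theorem~\ref{theorem:max4}, planar graphs of girth at least $6$ in Theorem~\ref{thm:gallai-planar-girth6}). Your proposal is likewise not a proof but a strategy outline, and you concede as much when you write that ``the main obstacle, and the reason the conjecture remains open\dots is producing such an $H$ in every graph.'' The entire content of the sparse case --- find an $r$-reducing subgraph $H$, recurse on the components of $G-E(H)$, and paste --- is exactly Lemma~\ref{lemma:reducing} together with the programme described in the paper's introduction; the hard part, which is constructing $H$ while controlling the exceptional residues ($K_3$, $K_5^-$, and quasi-complete components), is precisely what occupies Sections~\ref{sec:treewidth} and~\ref{sec:max4} under extra structural hypotheses and is not supplied here for general $G$.

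There is also a concrete circularity in your dense case. The counting lower bound is fine: if $|E(G)|>(n-1)\lfloor n/2\rfloor$ (which forces $n$ odd, since for $n$ even the threshold is $\binom{n}{2}$), then $\pn(G)\geq \lfloor n/2\rfloor+1=\lceil n/2\rceil$. But the matching upper bound $\pn(G)\leq\lceil n/2\rceil=\lfloor (n+1)/2\rfloor$ for these quasi-complete graphs is exactly Gallai's Conjecture~\ref{conj:gallai} for them, which is open; it is not ``accumulated along the induction,'' because your inductive hypothesis in the sparse regime only yields the bound $\lfloor |V(C')|/2\rfloor$ for \emph{sparse} instances and says nothing about dense ones. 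You would need a separate explicit decomposition of every graph obtained from $K_n$ ($n$ odd) by deleting at most $\lfloor n/2\rfloor-1$ edges into $(n+1)/2$ paths; this is plausible but is an independent claim you neither prove nor cite. As written, the proposal establishes neither branch of the conjecture beyond what the paper already proves for its special classes.
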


Extended abstracts of parts of this work~\cite{BoSa-LAGOS,BoSaCoLe-ETC} were accepted to LAGOS~2017 and 
to the Brazilian Computer Society Conference (CSBC 2017).
While writing this paper, we learned that
Bonamy and Perrett~\cite{BonamyPerrett16+} verified Conjecture~\ref{conj:gallai} for graphs with maximum degree \(5\).
However, since the advance of the state-of-the-art of Conjecture~\ref{conj:gallai} in this direction is very recent,
we believe that both techniques and proofs are important to the literature.

This work is organized as follows.
In Section~\ref{sec:reducing}, we define reducing subgraphs,
present some technical lemmas,
and confirm Conjecture~\ref{conj:gallai} for planar graphs with girth at least \(6\).
In Section~\ref{sec:treewidth}, we settle Conjectures~\ref{conj:gallai} and~\ref{conj:hajos} for graphs with treewidth at most~\(3\)
and, in Section~\ref{sec:max4}, we present new proofs for Conjectures~\ref{conj:gallai} and~\ref{conj:hajos}
for graphs with maximum degree at most \(4\).
Finally, in Section~\ref{sec:conclusion}, we give some concluding remarks.

\subsection*{Notation}
The basic terminology and notation used in this paper are standard (see, e.g.~\cite{Di10}).
All graphs considered here are finite and have no loops nor multiple edges.
Let \(G=(V,E)\) be a graph.
A path \(P\) in \(G\) is a sequence of distinct vertices \(P = v_0v_1\cdots v_\ell\)
such that~\(v_iv_{i+1}\in E\).
It is also convenient to refer to a path \(P=v_0v_1\cdots v_\ell\)
as the subgraph of~\(G\) induced by the edges~\(v_iv_{i+1}\), for~\(i=1,\ldots,\ell-1\).
For ease of notation, given an edge \(xy\),
we denote by \(G+xy\) the graph \((V\cup\{x,y\},E\cup\{xy\})\), and by \(G-xy\) the graph \((V,E\setminus\{xy\})\).
If \(E'\) is a set of edges,
then \(G+E'\) (resp. \(G-E'\)) denotes \(G+\sum_{e\in E'} e\) (resp. \(G-\sum_{e\in E'}e\)).
Given two (not necessarily disjoint) graphs \(G\) and \(H\),
we use \(G+H\) to denote the graph \(\big(V(G)\cup V(H),E(G)\cup E(H)\big)\).
We write \(G_1 \simeq G_2\) (resp. \(G_1 \not\simeq G_2\)) to denote
that \(G_1\) is isomorphic (resp. non-isomorphic) to \(G_2\).
Given a set \(U\) and an element \(e\), we define \(U + e = U \cup \{e\}\) and \(U - e = U \backslash \{e\}\).

In this paper, we frequently count the isolated vertices \(V_i\) in a graph \(G\) after the removal of the edges of a subgraph \(H\subset G\).
To avoid the introduction of more notation,
when clear from context,
the graph \(G-E(H)\) denotes
either the graph \(\big(V(G),E(G)-E(H)\big)\) or the graph \(\big(V(G)-V_i,E(G)-E(H)\big)\).

Let \(G\) be a connected graph.
We say that a set \(S\subseteq E(G)\) of edges of \(G\)
is an \emph{edge separator} if \(G-S\) is disconnected.
If \(S\) is a minimal edge separator, i.e.,
\(S\) is an edge separator, but \(S'\) is not an edge separator
for every \(S'\subset S\) with \(S'\neq S\),
we say that \(S\) is an \emph{edge-cut}.

The figures in this paper are depicted as follows.
Solid edges and full vertices illustrate edges and vertices that are present in the graph,
while dashed edges and empty vertices illustrate edges and vertices that may be in the graph,
and loosely dotted edges illustrate edges that are not present in the graph.
Straight edges illustrate simple edges, while snake edges illustrate paths with possible internal vertices.

\begin{figure}[h]
\floatbox[{\capbeside\thisfloatsetup{capbesideposition={right,center},capbesidewidth=10cm}}]{figure}[\FBwidth]
{\caption{	the vertices $a$, $c$, $d$ illustrate full/present vertices;
		the vertex $b$ illustrates an empty/possible vertex;
		the edge $ad$ illustrates a solid/present edge;
		the edges $ab$, $bc$, and $bd$ illustrate dashed/possible edges;
		the edge $ac$ illustrates a loosely dotted/non-present edge;
		the edge $cd$ illustrates a snake edge/path.
	}\label{fig:test}}
{\scalebox{.8}{\input{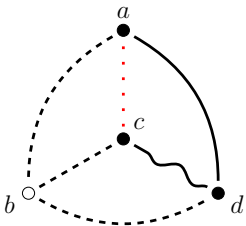}}}
\end{figure}

\section{Reducing subgraphs}\label{sec:reducing}

In this section, we define reducing subgraphs and present some results that allow us to deal with them
(see Lemma~\ref{lemma:reducing+triangles+K5-}).

Let \(G\) be a graph and let \(H\)  be a subgraph of \(G\).
Given a positive integer~\(r\),
we say that~\(H\) is an \emph{\(r\)-reducing subgraph of} \(G\)
if \(G-E(H)\) has at least \(2r\) isolated vertices
and \(\pn(H)\leq r\).
If \(H\) is a \(1\)-reducing subgraph of \(G\), then we say that~\(H\)
is a \emph{reducing path} of~\(G\).
We say that \(H\) is a \emph{reducing subgraph of} \(G\)
if \(H\) is an \(r\)-reducing subgraph of \(G\) for some positive integer \(r\).
We say that a graph \(G\) with \(n\) non-isolated vertices is a \emph{Gallai graph}
if \(\pn(G) \leq \lfloor n/2\rfloor\).
Note that, in this case,~\(G\) is also an \(\lfloor n/2\rfloor\)-reducing subgraph of itself.
The next lemma formalizes the relation between Gallai graphs and \(r\)-reducing subgraphs.
We also observe that since \(\pn(K_3) = 2\) and \(\pn(K_5) = \pn(K_5^-) = 3\),
the graphs \(K_3\), \(K_5\) and \(K_5^-\) are not Gallai graphs.

\begin{lemma}\label{lemma:reducing}
	Let \(G\) be a graph, and let \(H\subseteq G\) be a reducing subgraph of \(G\).
	If \(G-E(H)\) is a Gallai graph, then \(G\) is a Gallai graph.
\end{lemma}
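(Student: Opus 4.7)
The plan is to prove this by combining a minimum path decomposition of $H$ with one of $G' := G - E(H)$ and then showing that a simple vertex-count bookkeeping forces the resulting decomposition to satisfy the Gallai bound.

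First I would set up notation. Let $n$ denote the number of non-isolated vertices of $G$, and let $n'$ denote the number of non-isolated vertices of $G'$. Since $H$ is an $r$-reducing subgraph of $G$ by hypothesis, there is a positive integer $r$ with $\pn(H) \leq r$ such that $G'$ has at least $2r$ isolated vertices that were not isolated in $G$. The key counting observation is then $n' \leq n - 2r$: every vertex non-isolated in $G'$ has an edge in $G$ and so was non-isolated in $G$, and the $2r$ vertices guaranteed by the reducing property leave the non-isolated vertex set of $G'$.

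Next I would invoke the Gallai-graph hypothesis on $G'$ to deduce
\[
  \pn(G') \;\leq\; \left\lfloor \tfrac{n'}{2}\right\rfloor \;\leq\; \left\lfloor \tfrac{n-2r}{2}\right\rfloor \;=\; \left\lfloor \tfrac{n}{2}\right\rfloor - r.
\]
Since $E(G) = E(H) \sqcup E(G')$, the union of a minimum path decomposition of $H$ and a minimum path decomposition of $G'$ is a path decomposition of $G$, yielding $\pn(G) \leq \pn(H) + \pn(G') \leq r + \lfloor n/2\rfloor - r = \lfloor n/2\rfloor$, which is exactly the definition of $G$ being a Gallai graph.

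There is no real obstacle here; the only delicate point is the bookkeeping of isolated vertices under the convention that $G - E(H)$ may or may not retain the isolated vertices. I would handle this by making explicit that the $2r$ isolated vertices delivered by the $r$-reducing property are vertices that had positive degree in $G$, so they are removed from the count $n'$ but counted in $n$, which is precisely what makes the inequality $n' \leq n - 2r$ hold.
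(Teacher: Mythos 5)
Your proposal is correct and follows essentially the same route as the paper's proof: bound $\pn(G-E(H))$ by $\lfloor n/2\rfloor - r$ using the $2r$ isolated vertices guaranteed by the reducing property, then take the union of minimum path decompositions of $H$ and $G-E(H)$. Your explicit bookkeeping of the inequality $n' \leq n-2r$ is merely a more detailed rendering of a step the paper leaves implicit.
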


\begin{proof}
	Let \(G\) and \(H\) be as in the statement,
	where \(H\) is an \(r\)-reducing subgraph of \(G\).
	By the definition of \(r\)-reducing subgraph,
	\(G-E(H)\) has at least \(2r\) isolated vertices,
	and since \(G-E(H)\) is a Gallai graph,
	\(\pn(G-E(H)) \leq \lfloor (n-2r)/2\rfloor = \lfloor n/2\rfloor - r\).
	Thus, there is a path decomposition \(\D'\) of \(G-E(H)\)
	with size at most \(\lfloor n/2\rfloor - r\).
	Since~\(H\) is an \(r\)-reducing subgraph,
	there is a path decomposition \(\D_H\) of \(H\) with size at most \(r\), and hence
	\(\D'\cup\D_H\) is a path decomposition of \(G\)
	with size at most \(\lfloor n/2\rfloor\).
\end{proof}

Now we are able to verify Conjecture~\ref{conj:gallai} for planar graphs with girth at least~\(6\).
For that, we use the fact that every connected planar graph with \(n\) vertices and girth at least~\(6\),
contains at least three vertices of degree at most~\(2\).
Indeed, let \(G\) be a connected planar graph with girth at least \(6\),
and let \(n'\) be the number of vertices of \(G\) with degree at most~\(2\).
Since every vertex of \(G\) has degree at least \(1\),
if \(n' \leq 2\), then \(2|E(G)| = \sum_{v\in V(G)} d(v) \geq n' + 3(n-n') = 3n -2n' \geq 3n-4\).
On the other hand, since \(G\) has girth at least~\(6\),
the boundary walk of each face of \(G\) contains at least \(6\) edges.
Thus, \(2|E(G)| \geq 6f\), where \(f\) is the number of faces of \(G\).
By Euler's formula, we have \(n+f-|E(G)| = 2\),
which implies \(6 = 3n+3f-3|E(G)|  \leq 2|E(G)| + 4 + |E(G)| - 3|E(G)| = 4\),
a contradiction.

\begin{theorem}\label{thm:gallai-planar-girth6}
	Every connected planar graph with girth at least \(6\)
	is a Gallai graph.
\end{theorem}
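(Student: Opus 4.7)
My plan is to prove the theorem by induction on $n = |V(G)|$. The base cases ($n \leq 2$) are immediate, and because girth at least $6$ excludes $K_3$ and $K_5^-$ (the two non-Gallai small graphs identified after Lemma~\ref{lemma:reducing}), no obstruction arises. For the inductive step I would appeal to the fact just proved: $G$ contains three vertices $v_1, v_2, v_3$ of degree at most $2$. The goal is to locate a reducing subgraph $H$ of $G$, almost always a reducing path, and then invoke Lemma~\ref{lemma:reducing}. Each component of $G - E(H)$ is a connected planar graph with girth at least $6$ on fewer than $n$ vertices, hence a Gallai graph by the inductive hypothesis; summing decompositions across components (using $\sum \lfloor n_i/2 \rfloor \leq \lfloor \sum n_i/2 \rfloor$) shows that $G - E(H)$ is itself Gallai, and Lemma~\ref{lemma:reducing} yields the conclusion.

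The construction of $H$ proceeds by a case analysis on how the chosen low-degree vertices interact. \emph{Case~A:} some pair $v_i, v_j$ is adjacent. Here I would take the edge $v_iv_j$ and extend it on each side by the (at most one) remaining neighbor of $v_i$ or $v_j$; since the girth is at least $6$, no coincidence such as $v_i' = v_j'$ is possible (it would create a triangle or $4$-cycle), so the result is a simple path whose removal isolates both $v_i$ and $v_j$, giving a $1$-reducing subgraph. \emph{Case~B:} some pair $v_i, v_j$ shares a neighbor $u$ but is not adjacent. Then the walk $v_i' v_i u v_j v_j'$ (truncated appropriately if some $v_\ell$ is a leaf) is a simple path for the same girth-based reasons (notably $v_i' = v_j'$ would produce a forbidden $4$-cycle), and removing it isolates $v_i$ and $v_j$.

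\emph{Case~C:} the three vertices are pairwise non-adjacent and have pairwise disjoint neighborhoods. Here the plan is to use the connectedness of $G$ to exhibit a simple path that has two of the low-degree vertices $v_i, v_j$ either as endpoints (if they are leaves) or as interior vertices (if they have degree $2$), so that every edge incident to $v_i, v_j$ lies on the path; removing such a path isolates both. Concretely, I would start from a simple path in $G$ between $v_i$ and $v_j$ and attempt to prepend/append the remaining incident edge at each low-degree endpoint. If the neighbor one wishes to attach already lies on the path, the path is rerouted by truncating at that neighbor and then passing through $v_i$ (or $v_j$) to its other neighbor, which completes the construction.

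The main obstacle is exactly this last case: verifying in full generality that a simple path using all edges at both $v_i$ and $v_j$ exists. I expect to handle it by a careful sub-analysis according to whether $v_i$ or $v_j$ is a cut vertex of $G$, using the observation that a degree-$2$ cut vertex separates its two neighbors and therefore forces any path between $v_i$ and $v_j$ to traverse each such vertex through both of its edges automatically. Should any residual sub-case resist the $1$-reducing construction, the backup is to assemble a $2$-reducing subgraph from two vertex-disjoint paths through two different $v_i$'s, supplemented with further edges that are forced to appear by the extreme sparsity (average degree strictly less than $3$) of a planar graph of girth at least $6$, so that four vertices become isolated on removal.
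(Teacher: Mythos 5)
Your overall skeleton — induction on \(n\), locate a reducing path through two of the three guaranteed vertices of degree at most \(2\), apply Lemma~\ref{lemma:reducing} to the components of \(G-E(H)\) — is exactly the paper's strategy, and your Cases~A and~B are handled correctly. The problem is that Case~C, which you yourself identify as ``the main obstacle,'' is left open: the rerouting recipe is only sketched, the sub-analysis by cut vertices is deferred, and the ``backup'' \(2\)-reducing construction is not carried out. Since Case~C is the generic situation (two low-degree vertices that are far apart), this is a genuine gap rather than a routine verification: as written, you have not exhibited a reducing subgraph in the main case, so the induction does not close.

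The single observation that repairs this — and that simultaneously subsumes your Cases~A and~B, making the case split unnecessary — is to take \(P'\) to be a \emph{shortest} path between the two chosen vertices \(u\) and \(v\), as the paper does. Shortestness alone already forces the extra neighbor \(a'\) of \(u\) (the one not on \(P'\)) to lie off \(V(P')\): if \(a'\in V(P')\), the subpath \(P'(u,a')\) would be a shortest \(u\)--\(a'\) path, hence of length \(\mathrm{dist}(u,a')=1\), making \(a'\) the successor of \(u\) on \(P'\) and \(ua'\) an edge of \(P'\), a contradiction; the same holds for the extra neighbor \(b'\) of \(v\). The girth hypothesis is then needed only to rule out \(a'=b'\): a common neighbor forces \(\mathrm{dist}(u,v)\le 2\), so \(P'\) together with \(ua'\) and \(va'\) would close a cycle of length at most \(4\). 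Hence \(P=P'+S\) is automatically a simple path isolating \(u\) and \(v\), with no rerouting and no appeal to cut vertices or to \(2\)-reducing subgraphs. I recommend you replace the ad hoc path of Case~C by a shortest path and record these two observations explicitly; with that change your argument becomes complete and coincides with the paper's proof.
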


\begin{proof}
	Suppose, for a contradiction, that the statement does not hold,
	and let \(G\) be a counter-example for the statement with a minimum number of vertices.
	As observed above, \(G\) contains at least three vertices of degree at most \(2\).
	Let \(P'\) be a shortest path in \(G\) joining two of these vertices, say \(u\) and \(v\), and
	let \(S\) be the set of edges of \(G-E(P')\) incident to \(u\) or \(v\).
        Put~\(P = P'+S\).
	Since \(u\) and \(v\) have degree at most \(2\),
	\(S\) contains at most one edge incident to each~\(u\) and~\(v\),
        and since \(G\) has girth at least \(6\),
        \(P\) is a path.
	Moreover, \(u\) and \(v\) are isolated in \(G' = G - E(P)\)
	and hence~\(P\) is a reducing path.
        Note that each component of \(G'\) is a planar graph with girth at least~\(6\).
	By the minimality of \(G\), each component of \(G'\) is a Gallai graph,
        hence \(G'\) is a Gallai graph.
        Therefore, by Lemma~\ref{lemma:reducing}, \(G\) is a Gallai graph,
	a contradiction.
\end{proof}

Now let \(G\) be a Gallai graph on \(n\) non-isolated vertices
and let \(\D\) be a minimum path decomposition of \(G\).
Since each path in \(G\) contains at most \(n-1\),
we have \(|E(G)| = \sum_{P\in\D}|E(P)| \leq \sum_{P\in\D}(n-1) \leq \lfloor n/2\rfloor(n-1)\).
Note that this inequality holds whenever~\(n\) is even.
In the case where \(n\) is odd, we say that a graph \(G\) with \(n\)
vertices is \emph{quasi-complete} (or an \emph{odd semi-clique} -- see~\cite{BonamyPerrett16+})
if \(|E(G)| > \lfloor n/2\rfloor (n-1)\).
Therefore, it is clear that no quasi-complete graph is a Gallai graph.
The quasi-complete graphs are precisely the graphs obtained from~\(K_n\) (with \(n\) odd)
by removing at most \(\lfloor n/2\rfloor -1\) edges.
A direct implication of our main result is
that the only non-Gallai partial \(3\)-tree
are the quasi-complete partial \(3\)-trees, i.e., the complete graph \(K_3\) and \(K_5^-\).
Our main technique consist in finding reducing subgraphs of given graphs.
The following results allow us to construct \(r'\)-reducing subgraphs from \(r\)-reducing subgraphs
and graphs isomorphic to \(K_3\), \(K_5\), and \(K_5^-\).
Note that the following results require only the graph \(G\) to be connected
and need no other property as, for example, being a partial \(3\)-tree.

The proof of the next lemma follows the proof of Lemma 3.2 in~\cite{BotlerJimenez2015+}.
Given a cycle \(C\) in a graph \(G\),
a \emph{chord} of \(C\) is an edge in \(G-E(C)\) that joins two distinct vertices of \(C\).
Given a path \(P = x_0x_1\cdots x_k\), if \(i<j\), we denote by \(P(x_i,x_j)\)
the subpath \(x_ix_{i+1}\cdots x_j\).

\begin{lemma}\label{lemma:cycle+path}
	Let \(G\) be a connected graph that admits a decomposition into a path \(P\) and a cycle \(C\).
	\begin{enumerate}[(i)]
	\item if \(P\) contains at most one chord of \(C\),
			then \(G\) admits a decomposition into two paths \(P_1\) and \(P_2\)
			such that \(P_1\) contains exactly one edge of \(C\); and
	\item if \(C\) has length at most \(5\) and
			\(P\) contains at most three chords of \(C\),
			then \(\pn(G) = 2\).
	\end{enumerate}
\end{lemma}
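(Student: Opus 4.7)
The plan for both parts is to construct the required decomposition by opening the cycle \(C\) at one edge and splicing the resulting arc into the path \(P=x_0x_1\cdots x_k\).

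For part (i), the basic construction picks a vertex \(v\in V(P)\cap V(C)\) (non-empty by connectivity) and a \(C\)-neighbor \(u\) of \(v\), setting \(P_1 = P(x_0,v) + vu\) and \(P_2 = (C-vu) + P(v,x_k)\). Then \(P_1\) contains exactly the \(C\)-edge \(vu\), and together \(P_1,P_2\) cover \(E(G)\). For these to be simple paths, I would choose \(v\) to be the \emph{last} vertex of \(V(P)\cap V(C)\) along \(P\)---so that the tail \(P(v,x_k)\) avoids \(V(C)\setminus\{v\}\)---and \(u\) a \(C\)-neighbor of \(v\) not lying on \(P(x_0,v)\). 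The main obstacle is the case where both \(C\)-neighbors of \(v\) lie on \(P(x_0,v)\), and the symmetric attempt with the first intersection \(v_1\) (using the reversed orientation of \(P\)) also fails. In this residual configuration the at most one chord hypothesis yields enough structural freedom to construct \(P_1,P_2\) by a more intricate recipe, in which \(P_1\) uses one \(C\)-edge as a crossover and then follows a detour of \(P\) in reverse, ending at an interior detour vertex, while \(P_2\) combines the remaining \(C\)-arcs with the leftover \(P\)-segments into a single path.

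For part (ii), the lower bound \(\pn(G)\ge 2\) is immediate because \(G\) contains the cycle \(C\). For \(\pn(G)\le 2\), when \(P\) has at most one chord part (i) applies directly. Otherwise \(P\) has \(2\) or \(3\) chords and \(|V(C)|\in\{4,5\}\) (triangles admit no chord), so only finitely many configurations arise up to symmetry. The plan is to carry out a case analysis on \(|V(C)|\) and the cyclic arrangement of the chords, constructing \(P_1,P_2\) explicitly in each configuration. A useful tool is the \emph{chord swap}---replacing a chord \(xy\in E(P)\) by one of the two \(C\)-arcs from \(x\) to \(y\) yields a new path-plus-cycle decomposition with one fewer chord---which in many cases reduces to part (i). The main obstacle is the few sub-configurations where no chord swap is safe (it would create a vertex repetition in the modified path); in those, the small length of \(C\) allows a direct ad hoc construction of \(P_1\) and \(P_2\).
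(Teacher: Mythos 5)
Your overall strategy---open \(C\) at one edge incident to an extremal vertex of \(V(P)\cap V(C)\) and splice it onto a piece of \(P\)---is the same as the paper's, and your basic construction for part (i) is a mirror image of the paper's (the paper works at the first intersection \(z_0\), you at the last). The genuine gap is that in both parts you stop exactly where the work begins. In part (i), the residual case (both \(C\)-neighbours of the splice vertex lie on \(P\), at either end) is dismissed with ``the at most one chord hypothesis yields enough structural freedom,'' but the specific mechanism is missing. The paper's Claim is: if a \(C\)-neighbour \(z_i\) of \(z_0\) lies on \(P\) but the segment of \(P\) from the previous intersection \(z_{i-1}\) to \(z_i\) has length at least \(2\), one can delete the last edge \(z'z_i\) of that segment and take \(P_1=z'\cdots z_0z_i\cdots x_k\); and, crucially, a length-one such segment is itself a chord of \(C\), so ``at most one chord'' forbids both approach segments (to \(y_1\) and to \(y_l\)) from being short. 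Without this observation you have no argument that your residual case is tractable---it is this accounting of short segments as chords, not generic ``freedom,'' that closes case (i).

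In part (ii) your chord-swap is a genuinely different reduction from the paper's (which never swaps, but reruns its Claim and then analyses the positions of the chord endpoints along \(P\)), and it is sound as far as it goes: a safe swap shortens the cycle and strictly decreases the chord count, so two safe swaps land you in part (i). But the configurations in which no swap is safe are not a few degenerate leftovers; for \(C_5\) with three chords every arc of every chord can contain an interior vertex of \(P\), and these are exactly the cases on which the paper spends the bulk of its proof, exhibiting about five distinct explicit decompositions distinguished by whether \(z_iz_j\in E(P)\), whether \(y_2\in V(P)\), and the order of the chord endpoints along \(P\). Asserting that ``the small length of \(C\) allows a direct ad hoc construction'' there is not a proof: the paper's tightness example (\(K_5^-\) decomposes into a path carrying four chords of a \(C_5\), yet \(\pn(K_5^-)=3\)) shows that these constructions exist only because of the bound of three chords, so they must actually be produced, not presumed.
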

\begin{proof}
	Let \(G\), \(P\), and \(C\) be as in the statement.
	Let \(P = x_0x_1 \cdots x_k\) and \(C = y_0 \cdots y_ly_0\).
	Let \(z_0, \ldots, z_s\) be the vertices in \(V(C) \cap V(P)\)
	in the order that they appear in \(x_0 \cdots x_k\), and
	suppose without loss of generality that \(z_0 = y_0\).
	
	\begin{claim}\label{claim:cycle-path}
		If \(\{y_1,y_l\}\not\subset V(P)\) or 
		\(P(z_{i-1},z_i)\) has length at least~\(2\) for some \(z_i \in \{y_1, y_l\}\),
		then \(G\) can be decomposed into two paths \(P_1\) and \(P_2\) such that
		\(P_1\) contains exactly one edge of \(C\).
	\end{claim}
	\begin{proof}
        If there exists a vertex \(y \in \{y_1, y_{l}\}\) such that \(y \notin V(P)\),
        then \(P_1 = P - E(P(x_0, z_0)) + z_0y\) and \(P_2 = G - E(P_1)\) decompose \(G\) as desired (see Figure~\ref{fig:cycle-path-claim1}).
	If~\(P(z_{i-1}, z_i)\) has length at least \(2\) for some \(z_i \in \{y_1, y_l\}\), 
	then there is a neighbor \(z'\) of~\(z_i\) in~\(P(z_{i-1}, z_i)\).
        Note that \(z'\) is not a vertex of \(C\).
	Thus, \(P_1 = P -E(P(x_0,z_0)) - z'z_i + z_0y\) and \(P_2 = G - E(P_1)\) decompose \(G\) as desired (see Figure~\ref{fig:cycle-path-claim2}).
        \end{proof}

	\begin{figure}[h]
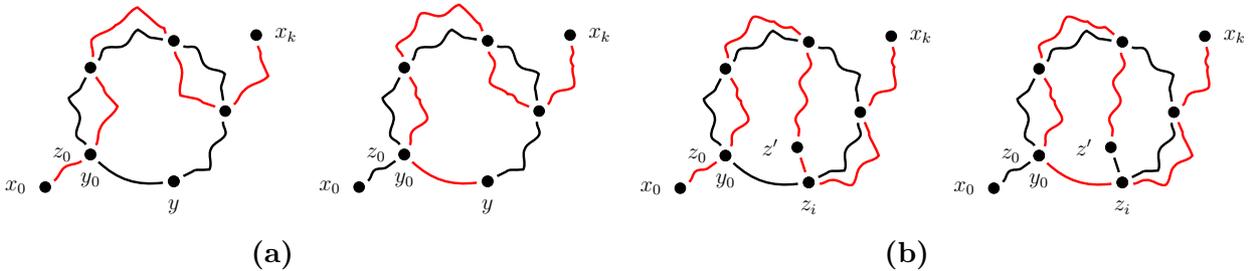

	\centering
  \hspace{ -1.5cm}
		\begin{subfigure}[b]{.45\linewidth}
		\centering
		\scalebox{.7}{\input{Figures/cycle-path1-before.tikz}}%
		\scalebox{.7}{\input{Figures/cycle-path1-after.tikz}}

		\caption{}\label{fig:cycle-path-claim1}
		\end{subfigure}%
    \hspace{ 1cm}
		\begin{subfigure}[b]{.45\linewidth}
		\centering\scalebox{.7}{\input{Figures/cycle-path2-before.tikz}}%
		\centering\scalebox{.7}{\input{Figures/cycle-path2-after.tikz}}
		\caption{}\label{fig:cycle-path-claim2}
		\end{subfigure}%

                \caption{The illustration on the left of figures (a) and (b) shows the path $P$ and the cycle $C$ in red and black, respectively, while
                  the illustration on the right shows the paths $P_1$ and $P_2$ in red and black, respectively.}
\label{fig:case9}
\end{figure}

        Now we prove item (i).
        Suppose that \(P\) contains at most one chord of \(C\).
        If \(\{y_1,y_l\}\not\subset V(P)\), then the result follows by Claim~\ref{claim:cycle-path}.
	Thus, let \(\{z_i,z_j\} = \{y_1,y_l\}\).
        Since \(P\) contains at most one chord of \(C\),
        at least one between \(P(z_{i-1}, z_i)\) or \(P(z_{j-1}, z_j)\) has length at least~\(2\).
        Again, by Claim~\ref{claim:cycle-path}, the result follows.
	This concludes the proof of Item (i).

	Now we prove item (ii).
	Suppose that \(C\) has length at most \(5\) and that \(P\) contains at most \(3\) chords of \(C\).
	If \(C\) has length \(3\), then \(P\) contains no chord of \(C\) and the result follows by item (i).
	Thus, suppose that \(C\) has length \(4\).
	By Claim~\ref{claim:cycle-path}, we may assume that \(\{y_1,y_3\}\subset V(P)\), and hence 
        let \(z_i \in \{y_1,y_3\}\).
	If \(P(z_{i-1},z_i)\) has length \(1\),
	then \(P(z_{i-1},z_i) = y_1y_3\) and \(z_{i-1}\in\{y_1,y_3\}-z_i\).
	Thus, if \(z_i=y_1\) and \(z_j=y_3\), then
	either \(P(z_{i-1},z_i)\) or \(P(z_{j-1},z_j)\) has length at least \(2\),
	and the result follows by item (i).

\begin{figure}[h]
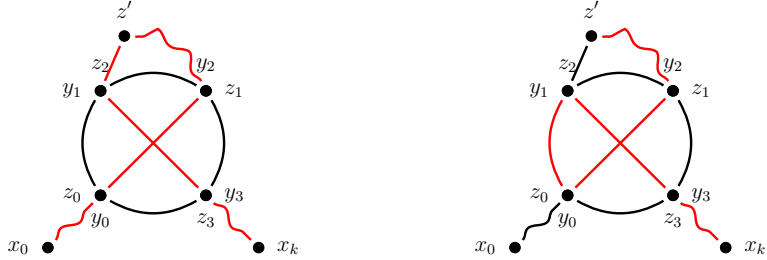

	\centering
	\begin{subfigure}[b]{.25\linewidth}
	\centering\scalebox{.7}{\input{Figures/cycle-path3-before.tikz}}

	\end{subfigure}%
  \hspace{ 2cm}
	\begin{subfigure}[b]{.25\linewidth}
	\centering\scalebox{.7}{\input{Figures/cycle-path3-after.tikz}}

	\end{subfigure}%

                \caption{The illustration on the left shows the path $P$ and the cycle $C$ of length \(4\) in red and black, respectively, while
                  the illustration on the right shows the paths $P_1$ and $P_2$ in red and black, respectively.}
	\label{fig:case11}
\end{figure}

	Therefore, we may assume that \(C\) has length \(5\).
	By Claim~\ref{claim:cycle-path}, we can assume that \(\{y_1,y_4\}\subset V(P)\).
	Let \(\{z_i,z_j\}=\{y_1,y_4\}\),
	Suppose, without loss of generality, that \(i < j\)
	and that \(z_i = y_1\) and \(z_j=y_4\).
	By Claim~\ref{claim:cycle-path}, \(P(z_{i-1}, z_i)\) and \(P(z_{j - 1}, z_j)\) have length \(1\),
        i.e., \(P(z_{i-1}, z_i) = z_{i-1}z_i\) and \(P(z_{j-1}, z_j)=z_{j-1}z_j\) are two different chords of \(C\) in \(P\).
	We divide this proof into two cases, depending on whether \(z_iz_j\notin E(P)\) or \(z_iz_j\in E(P)\).
	First, suppose that \(z_iz_j \notin E(P)\).
	Since \(P(z_{i-1},z_i)\) and \(P(z_{j-1},z_j)\) have length \(1\),
	\(z_{i-1} = y_3\) and \(y_{j-1} = y_2\).
	Since there are no other vertex in \(C\),
	we have \(i=2\) and \(j=4\).
	Hence, \(z_1 = y_3\), \(z_2 = y_1\), \(z_3 = y_2\), and \(z_4 = y_4\).
	Since \(y_1y_2 \in E(C)\), the subpath \(P(z_2,z_3)\) has length at least \(2\).
	Let~\(z'\) be the neighbor of \(y_1\) in \(P(z_2,z_3)\),
        and put \(P_1 = z'y_1y_0 + P(z_0,z_1) + y_3y_2y_4 + P(z_4,x_k)\),
	and \(P_2 = G - E(P_1)\).
        Again, \(\{P_1, P_2\}\) is a path decomposition of \(G\) as desired (see Figure~\ref{fig:case12}).

\begin{figure}[h]
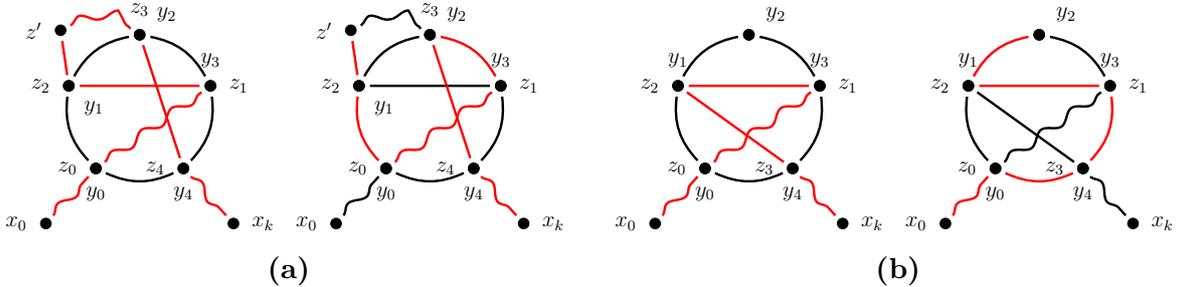

	\centering
	\begin{subfigure}[b]{.5\linewidth}
	\centering\scalebox{.7}{\input{Figures/cycle-path4-before.tikz}}%
	\centering\scalebox{.7}{\input{Figures/cycle-path4-after.tikz}}
	\caption{}\label{fig:case12}
	\end{subfigure}%
	\begin{subfigure}[b]{.5\linewidth}
	\centering\scalebox{.7}{\input{Figures/cycle-path5-before.tikz}}%
	\centering\scalebox{.7}{\input{Figures/cycle-path5-after.tikz}}
	\caption{}\label{fig:case13}
	\end{subfigure}%

        \caption{The illustration on the left of figures (a) and (b) shows the path $P$ and the cycle $C$ of length \(5\) in red and black, respectively, while
                 the illustration on the right shows the paths $P_1$ and $P_2$ in red and black, respectively.}
\end{figure}

	Thus, we may assume that \(z_iz_j\in E(P)\).
	Then \(j=i+1\) and, since \(P(z_{i-1},z_i)\) is a chord, \(z_{i-1} = y_3\).
	Here we have three cases,
	depending on whether (a) \(y_2 \notin V(P)\);
	(b) \(z_r = y_2\) and \(r < i-1\); or (c) \(z_r = y_2\) and \(r> i+1\).
	In case (a), we have \(z_1 = y_3\), \(z_2 = y_1\), and \(z_3 = y_4\).
        Therefore, \(P_1 = P(x_0,y_0) + y_0y_4y_3y_1y_2\) and \(P_2 = G - E(P_1)\) are two paths which decompose \(G\) (see Figure~\ref{fig:case13}).
	In case (b), we have \(z_1 = y_2\), \(z_2 = y_3\), \(z_3 = y_1\), and \(z_4 = y_4\).
	Since \(y_2y_3\in E(C)\), the subpath \(P(z_1,z_2)\) has length at least \(2\).
	Let \(z'\) be the neighbor of \(y_3\) in \(P(z_1,z_2)\).
	Therefore, \(P_1 = P(x_0,z_1) + y_2y_1y_4y_3z'\) and \(P_2 = G - E(P_1)\) are two paths which decompose \(G\) (see Figure~\ref{fig:case14}).
	In case (c),  we have \(z_1 = y_3\), \(z_2 = y_1\), \(z_3 = y_4\), and \(z_4 = y_2\).
	Since \(P\) contains at most three chords, at most one  between \(P(z_0,z_1)\) and \(P(z_3,z_4)\) is a chord.
	By symmetry, we can suppose that \(P(z_3, z_4)\) is not a chord,
	and let \(z'\) be the neighbor of \(y_2\) in \(P(z_3,z_4)\),
	Therefore, \(P_1 = P(x_0,z_0) + y_0y_4y_1y_3y_2z'\) and \(P_2 = G-E(P_1)\) are two paths which decompose \(G\) (see Figure~\ref{fig:case15}).
\begin{figure}[h]
	\centering
	\begin{subfigure}[b]{.5\linewidth}
	\centering\scalebox{.7}{\input{Figures/cycle-path6-before.tikz}}%
	\centering\scalebox{.7}{\input{Figures/cycle-path6-after.tikz}}
	\caption{}\label{fig:case14}
	\end{subfigure}%
	\begin{subfigure}[b]{.5\linewidth}
	\centering\scalebox{.7}{\input{Figures/cycle-path7-before.tikz}}%
	\centering\scalebox{.7}{\input{Figures/cycle-path7-after.tikz}}
	\caption{}\label{fig:case15}
	\end{subfigure}%

        \caption{The illustration on the left of figures (a) and (b) shows the path $P$ and the cycle $C$ of lenght \(5\) in red and black, respectively, while
                  the illustration on the right shows the paths $P_1$ and $P_2$ in red and black, respectively.}
\end{figure}
\end{proof}

Note that \(K_5^-\) can be decomposed into a path \(P\) and a cycle \(C\)
such that \(P\) contains precisely four chords of \(C\).
Therefore, Lemma~\ref{lemma:cycle+path}(ii) is tight.

\begin{corollary}\label{cor:triangles+path}\label{lemma:C4+path}
	Let \(G\) be a connected graph that can be decomposed
	into a non-empty graph \(H\) and \(k\) pairwise vertex-disjoint cycles of length 3 or 4.
	Then \(\pn(G)\leq \pn(H) + k\).
\end{corollary}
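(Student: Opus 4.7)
The plan is to prove this by induction on $k$, absorbing one cycle at a time into the current path decomposition via Lemma~\ref{lemma:cycle+path}(ii). Concretely, I will start from a minimum path decomposition $\mathcal{D}$ of $H$, of size $\pn(H)$, and for each cycle $C_i$ show how to replace a single path $P \in \mathcal{D}$ by two paths that together cover $E(P)\cup E(C_i)$, thereby increasing $|\mathcal{D}|$ by exactly one per cycle.

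The key step is selecting the right path $P$ to merge with $C_i$. First I would observe that since the cycles are pairwise vertex-disjoint and $H$ is non-empty (in edge sense), each $C_i$ must share a vertex with $H$: otherwise $V(C_i)$ would be disjoint from $V(H)$ and from every $V(C_j)$ with $j \neq i$, so $C_i$ would form a connected component of $G$ not containing an edge of $H$, contradicting the connectedness of $G$ together with $E(H)\neq\emptyset$. Let $v_i \in V(C_i)\cap V(H)$. Since $\mathcal{D}$ covers $E(H)$, some path $P \in \mathcal{D}$ contains $v_i$. Then $P \cup C_i$ is a connected graph that admits a decomposition into the path $P$ and the cycle $C_i$ (these are edge-disjoint because $E(C_i)$ is disjoint from $E(H)$ and from the other $E(C_j)$).

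Now I would verify the chord hypothesis of Lemma~\ref{lemma:cycle+path}(ii). A chord of $C_i$ in $P$ is an edge of $P$ with both endpoints in $V(C_i)$. If $C_i$ has length $3$, then $V(C_i)$ already induces $K_3$, so no chord is possible in a simple graph. If $C_i$ has length $4$, say $C_i = y_0y_1y_2y_3y_0$, then the only candidate chords are the two diagonals $y_0y_2$ and $y_1y_3$. In either case $P$ contains at most $2 \leq 3$ chords of $C_i$, so Lemma~\ref{lemma:cycle+path}(ii) applies and yields $\pn(P \cup C_i) \leq 2$. Replacing $P$ in $\mathcal{D}$ by these two paths gives a path decomposition of $H \cup C_1 \cup \cdots \cup C_i$ of size $\pn(H) + i$.

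Iterating over $i = 1,\ldots,k$, the construction only needs that at each step some vertex of $C_i$ lies on a path of the current $\mathcal{D}$, which is guaranteed because the paths of $\mathcal{D}$ always cover $V(H)$ (this property is preserved by each replacement). After $k$ steps $\mathcal{D}$ is a path decomposition of $G$ of size $\pn(H)+k$, proving the corollary. The only real obstacle is bounding the chord count, and this is handled uniformly by the length restriction on the $C_i$ combined with their pairwise vertex-disjointness, which forces every chord to have both endpoints in $V(H)\cap V(C_i)$.
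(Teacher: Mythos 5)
Your proof is correct and follows essentially the same route as the paper: induction on \(k\), using the connectedness of \(G\) to find a path of the current decomposition meeting each cycle, and applying Lemma~\ref{lemma:cycle+path}(ii) to split that path plus the cycle into two paths. Your explicit check that a cycle of length \(3\) or \(4\) can have at most two chords (so the hypothesis of Lemma~\ref{lemma:cycle+path}(ii) holds) is a detail the paper leaves implicit, but the argument is the same.
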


\begin{proof}
	The proof follows by induction on \(k\).
	Let \(\{H,C_1,\ldots,C_k\}\) be a decomposition as in the statement.
	If \(k=1\), then let \(\D'\) be a minimum path decomposition of \(H\),
	and let \(P\) be a path of \(\D'\) that intercepts \(C_1\).
	By Lemma~\ref{lemma:cycle+path}(ii), \(G' = P + C_1\) admits a decomposition
	into two paths, say \(P_1,P_2\).
	Therefore, \(\D = \D' - P + P_1+P_2\) is a path decomposition
	of \(G\) with size \(\pn(H) + 1\).
	Now suppose \(k>1\).
	Since \(G\) is connected, we have \(V(C_i)\cap V(H)\neq\emptyset\), for every \(i\in\{1,\ldots,k\}\).
	Thus, \(H'=H+C_k\) is connected.
	From the case \(k=1\) we have \(\pn(H') \leq \pn(H) + 1\).
	Now, note that \(\{H',C_1,\ldots,C_{k-1}\}\) is a decomposition of~\(G\) as in the statement.
	By the induction hypothesis, \(\pn(G) \leq \pn(H') + k-1\leq \pn(H)+k\).
\end{proof}

The next result is a version of Lemma~\ref{lemma:cycle+path} for \(K_5\) and \(K_5^-\).

\begin{lemma}\label{lemma:path+K5-orK5}
	If \(G\) is a connected graph that can be decomposed into a path
	and a copy of~\(K_5\) or a copy of \(K_5^-\),
	then \(\pn(G) = 3\).
\end{lemma}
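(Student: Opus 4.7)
The plan is to prove the two bounds separately. For the lower bound $\pn(G)\geq 3$, I would observe that any path decomposition $\{Q_1,\dots,Q_k\}$ of $G$ induces a path decomposition of $H$ by restriction: since each $Q_i$ is a simple path, the set of edges of $Q_i$ that lie in $E(H)$ spans a subgraph of $H$ in which every vertex has degree at most $2$ and which contains no cycle (it is a subgraph of the path $Q_i$). Hence $Q_i\cap H$ is a disjoint union of $m_i$ vertex-disjoint subpaths, and since the $Q_i$ partition $E(G)$, these subpaths together partition $E(H)$, so $\sum_i m_i\geq \pn(H)=3$. On the other hand, for each $i$ with $m_i\geq 1$, the $m_i$ components of $Q_i\cap H$ live on at most $|V(H)|=5$ vertices and therefore use $|V(Q_i\cap H)|-m_i\leq 5-m_i$ edges. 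Summing yields $|E(H)|+\sum_i m_i\leq 5k$, and since $|E(H)|\geq 9$, one gets $5k\geq 12$ and hence $k\geq 3$.

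For the upper bound $\pn(G)\leq 3$, let $P^*$ denote the path in the hypothesized decomposition of $G$; if $P^*$ is edgeless then $G=H$ and we are done since $\pn(K_5)=\pn(K_5^-)=3$, so assume $P^*$ has at least one edge. Connectivity of $G$ forces $V(P^*)\cap V(H)\neq\emptyset$, which will guarantee connectivity of every auxiliary graph appearing below. The crucial observation is that the only possible edge with both endpoints in $V(H)$ outside $E(H)$ is the missing edge of $K_5^-$, so $P^*$ contains at most one edge with both endpoints in $V(H)$ when $H\simeq K_5^-$ and no such edge at all when $H\simeq K_5$.

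When $H\simeq K_5$, I would decompose $H$ into two edge-disjoint Hamilton $5$-cycles $C_1,C_2$. Since $P^*$ has no chord of $C_1$, Lemma~\ref{lemma:cycle+path}(i) applied to the connected graph $P^*+C_1$ yields a decomposition into two paths $\{P_a,P_b\}$ with $P_a$ containing exactly one edge $e$ of $C_1$. Because $C_1$ and $C_2$ are edge-disjoint cycles on $V(H)$, the edge $e$ connects two non-adjacent vertices of $C_2$ and so is a chord of $C_2$; the remaining edges of $P_a$ come from $P^*$ and contribute no further chord, so $P_a$ has exactly one chord of $C_2$. A second application of Lemma~\ref{lemma:cycle+path}(i) to $P_a+C_2$ then gives two paths, which together with $P_b$ form the desired decomposition of $G$ into three paths. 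When $H\simeq K_5^-$, I would decompose $H$ into a Hamilton $5$-cycle $C$ avoiding the missing edge and the complementary Hamilton path $Q$ between the two degree-$3$ vertices of $H$. Then $P^*$ has at most one chord of $C$, so Lemma~\ref{lemma:cycle+path}(ii) applied to $P^*+C$ gives two paths, and adjoining $Q$ yields a decomposition of $G$ into three paths.

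The main obstacle is the careful chord and connectivity bookkeeping required for the repeated application of Lemma~\ref{lemma:cycle+path} in the $K_5$ case: one must verify both that $P^*$ contributes exactly zero chords of $C_1$ and that, after one application of Lemma~\ref{lemma:cycle+path}(i), the resulting path $P_a$ contains exactly one chord of $C_2$, so that Lemma~\ref{lemma:cycle+path}(i) can be invoked again. The $K_5^-$ case is easier because $Q$ can be kept aside throughout, and only one application of Lemma~\ref{lemma:cycle+path} is needed.
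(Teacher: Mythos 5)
Your proof is correct, and the upper bound follows essentially the same route as the paper: decompose $K_5$ (resp.\ $K_5^-$) into a Hamiltonian cycle plus a complementary Hamiltonian cycle (resp.\ path), verify the chord counts, and apply Lemma~\ref{lemma:cycle+path} once or twice. Your explicit edge-counting argument for the lower bound $\pn(G)\geq 3$ is sound and supplies a detail the paper leaves implicit.
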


\begin{proof}
	Let \(P\) be a path and \(H\) be a copy of \(K_5\) or \(K_5^-\),
	as in the statement.
	It is clear that~\(H\) admits a decomposition into a cycle \(C\),
	and a path or a cycle \(B\).
	If \(H\) is isomorphic to \(K_5^-\),
	then \(P\) contains at most one chord of \(C\).
	By Lemma~\ref{lemma:cycle+path}(i), \(P+C\) admits a decomposition into two paths,
	say \(P_1,P_2\).
	Therefore, \(\{P_1,P_2,B\}\) is a path decomposition of \(G\) with size~\(3\).
	If \(H\) is isomorphic to \(K_5\),
	then \(P\) contains no chord of \(C\) or \(B\).
	By Lemma~\ref{lemma:cycle+path}(i), \(P + C\) admits a decomposition into two paths,
	say \(P_1,P_2\),
	such that \(P_1\) contains exactly one edge of~\(C\).
	By Lemma~\ref{lemma:cycle+path}(i), \(P_1 + B\) admits a decomposition into two paths,
	say \(P_3,P_4\).
	Therefore, \(\{P_2,P_3,P_4\}\) is a path decomposition of \(G\) with size \(3\).
\end{proof}

The next result is an analogous version of Corollary~\ref{cor:triangles+path} for \(K_5\) and \(K_5^-\).
Its proof uses Lemma~\ref{lemma:path+K5-orK5} instead of Lemma~\ref{lemma:cycle+path}(ii).

\begin{corollary}\label{cor:extras-K5-}
	Let \(G\) be a connected graph
	that can be decomposed into a non-empty graph \(H\)
	and \(k\) pairwise vertex-disjoint copies of \(K_5\) or \(K_5^-\).
	Then \(\pn(G) \leq \pn(H) + 2k\).
\end{corollary}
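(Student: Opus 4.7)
The plan is to mimic the proof of Corollary~\ref{cor:triangles+path}, proceeding by induction on \(k\) and invoking Lemma~\ref{lemma:path+K5-orK5} in place of Lemma~\ref{lemma:cycle+path}(ii) at the step where a single \(K_5\) or \(K_5^-\) is absorbed into one path of a decomposition of \(H\).

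For the base case \(k=1\), write the decomposition as \(\{H, K\}\) where \(K\) is a copy of \(K_5\) or \(K_5^-\). Take a minimum path decomposition \(\D'\) of \(H\). Since \(G\) is connected and \(H\) and \(K\) are edge-disjoint, \(V(K)\cap V(H)\neq\emptyset\), so some path \(P\in\D'\) shares a vertex with \(K\); hence \(P+K\) is a connected graph that decomposes into the path \(P\) and the copy \(K\). By Lemma~\ref{lemma:path+K5-orK5}, \(\pn(P+K)=3\), so \(P+K\) admits a decomposition into three paths \(P_1,P_2,P_3\). Then \((\D'-P)\cup\{P_1,P_2,P_3\}\) is a path decomposition of \(G\) of size \(\pn(H)-1+3=\pn(H)+2\), as required.

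For the inductive step with \(k>1\), write the decomposition as \(\{H,K_1,\ldots,K_k\}\). Since \(G\) is connected and the copies are pairwise vertex-disjoint, each \(K_i\) must share a vertex with some other piece, and in particular \(H'\Def H+K_k\) is connected. The base case applied to \(\{H,K_k\}\) gives \(\pn(H')\leq \pn(H)+2\). Now \(\{H',K_1,\ldots,K_{k-1}\}\) is a decomposition of \(G\) of the form required by the statement, with \(k-1\) vertex-disjoint copies of \(K_5\) or \(K_5^-\). By the induction hypothesis,
\[
\pn(G)\leq \pn(H')+2(k-1)\leq \pn(H)+2+2(k-1)=\pn(H)+2k.
\]

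I do not anticipate a real obstacle: Lemma~\ref{lemma:path+K5-orK5} has already done all of the combinatorial work of rearranging a path together with a copy of \(K_5\) or \(K_5^-\) into three paths, and the connectedness argument carries over verbatim from Corollary~\ref{cor:triangles+path}. The only subtlety to verify is that the path \(P\) chosen from \(\D'\) indeed meets \(V(K)\); this follows because any path decomposition of \(H\) covers \(V(H)\cap V(K)\), which is nonempty.
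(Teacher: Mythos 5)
Your proof is correct and follows exactly the route the paper intends: the paper omits the proof, remarking only that it is obtained from the proof of Corollary~\ref{cor:triangles+path} by substituting Lemma~\ref{lemma:path+K5-orK5} for Lemma~\ref{lemma:cycle+path}(ii), which is precisely your induction on \(k\) with the base case absorbing one copy of \(K_5\) or \(K_5^-\) into a path of a minimum decomposition of \(H\) at a cost of two extra paths.
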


The following result
shows that reducing subgraphs can absorb copies of \(K_3, K_5\) and~\(K^-_5\)
while keeping its reducing property.
From now on, given a graph \(G\) and a subgraph \(H\) of \(G\),
we denote by \(\mathcal{C}_3^H\) (resp. \(\mathcal{C}_5^H\))
the set of components of \(G-E(H)\) isomorphic to \(K_3\) (resp. \(K_5\) or \(K_5^-\)).
We denote by \(\mathcal{C}_3^\emptyset\) (resp. \(\mathcal{C}_5^\emptyset\))
the components of \(G\) isomorphic to \(K_3\) (resp. \(K_5\) or \(K_5^-\)).

\begin{lemma}\label{lemma:reducing+triangles+K5-}
	Let \(G\) be a graph such that \(\mathcal{C}_3^\emptyset=\mathcal{C}_5^\emptyset=\emptyset\),
	and let \(H\) be a \(r\)-reducing subgraph of \(G\).
	Then, \(H'=H+\bigcup_{C\in\mathcal{C}_3^H\cup\mathcal{C}_5^H}C\)
	is an \(\big(r+|\mathcal{C}_3^H|+2|\mathcal{C}_5^H|\big)\)-reducing subgraph of \(G\).
\end{lemma}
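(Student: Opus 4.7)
My plan is to verify separately the two defining properties of an $\big(r + |\mathcal{C}_3^H| + 2|\mathcal{C}_5^H|\big)$-reducing subgraph for $H'$: that $G - E(H')$ contains enough isolated vertices, and that $\pn(H') \leq r + |\mathcal{C}_3^H| + 2|\mathcal{C}_5^H|$.

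For the isolated-vertex count, I would begin from the fact that $G - E(H)$ already has at least $2r$ isolated vertices, since $H$ is $r$-reducing. Passing from $G - E(H)$ to $G - E(H')$ deletes exactly the edges of the components $C \in \mathcal{C}_3^H \cup \mathcal{C}_5^H$. Because each such $C$ is a component of $G - E(H)$, every edge of $G - E(H)$ incident to a vertex of $C$ lies in $E(C)$, so every vertex of $C$ becomes isolated in $G - E(H')$. These new isolates are disjoint from the previous ones (which had degree $0$ in $G - E(H)$, whereas vertices of the $C$'s have positive degree there), so $G - E(H')$ contains at least $2r + 3|\mathcal{C}_3^H| + 5|\mathcal{C}_5^H| \geq 2\big(r + |\mathcal{C}_3^H| + 2|\mathcal{C}_5^H|\big)$ isolated vertices.

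For the path-number bound, the crucial observation is that the hypothesis $\mathcal{C}_3^\emptyset = \mathcal{C}_5^\emptyset = \emptyset$ forces $V(C) \cap V(H) \neq \emptyset$ for every $C \in \mathcal{C}_3^H \cup \mathcal{C}_5^H$: otherwise, every edge of $G$ at $V(C)$ would already lie in $E(C)$, making $C$ a component of $G$ and contradicting the hypothesis. With this in hand, I would first apply Corollary~\ref{cor:triangles+path} to $H$ together with the pairwise vertex-disjoint triangles in $\mathcal{C}_3^H$ to obtain $\pn\big(H + \bigcup_{C \in \mathcal{C}_3^H} C\big) \leq r + |\mathcal{C}_3^H|$, and then apply Corollary~\ref{cor:extras-K5-} to this larger graph together with the pairwise vertex-disjoint copies in $\mathcal{C}_5^H$ to reach $\pn(H') \leq r + |\mathcal{C}_3^H| + 2|\mathcal{C}_5^H|$, as desired.

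The delicate point I expect to have to address is the connectedness hypothesis in Corollaries~\ref{cor:triangles+path} and~\ref{cor:extras-K5-}: the graphs $H$ and its enlargements need not be connected in general. However, a look at those proofs shows that connectedness is used only to ensure $V(C) \cap V(H) \neq \emptyset$ for each additional component, which I have already established directly. I therefore plan either to invoke the corollaries in a slightly extended form with this weakened hypothesis, or equivalently to apply them componentwise to $H'$, pairing within each component the relevant triangles and copies of $K_5$ or $K_5^-$ with the non-empty piece of $H$ to which they attach. Either route yields the required bound on $\pn(H')$ and completes the proof.
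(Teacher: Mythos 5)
Your proposal is correct and follows essentially the same route as the paper's proof: the same isolated-vertex count (the vertices of the absorbed components are non-isolated in $G-E(H)$ but become isolated in $G-E(H')$, giving $|I_{H'}| = |I_H| + 3|\mathcal{C}_3^H| + 5|\mathcal{C}_5^H| \geq 2r'$) and the same path-number bound via Corollaries~\ref{cor:triangles+path} and~\ref{cor:extras-K5-}. Even the subtle point you flag about connectedness is handled in the paper exactly as you propose, by applying the corollaries in each component of $H'$.
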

\begin{proof}
	Let \(G\), \(H\), and \(H'\) be as in the statement.
	Let \(I_H\) and \(I_{H'}\) be the isolated vertices of \(G-E(H)\) and \(G-E(H')\), respectively,
	and let \(r' = r + |\mathcal{C}_3^H|+ 2|\mathcal{C}_5^H|\).
	Since~\(H\) is an \(r\)-reducing subgraph of \(G\), we have \(\pn(H)\leq r\) and \(|I_H| \geq 2r\).
	Since \(\mathcal{C}_3^\emptyset=\mathcal{C}_5^\emptyset=\emptyset\),
	each component in \(\mathcal{C}_3^H\cup\mathcal{C}_5^H\) intersect \(H\).
	By applying Corollaries~\ref{cor:triangles+path} and~\ref{cor:extras-K5-} in each component of \(H'\),
	we have \(\pn(H') \leq \pn(H) + |\mathcal{C}_3^H|+ 2|\mathcal{C}_5^H| \leq r'\).
	Note that, if \(x\) is a vertex in \(V\big(\bigcup_{C\in\mathcal{C}_3^H\cup\mathcal{C}_5^H}C\big)\),
	then~\(x\) is not isolated in \(G-E(H)\), hence \(x\notin I_H\),
	but \(x\) is isolated in~\(G-E(H')\).
	Thus, we have
	\[|I_{H'}| = |I_H| + 3|\mathcal{C}_3^H|+ 5|\mathcal{C}_5^H| \geq 2(r + |\mathcal{C}_3^H|+ 2|\mathcal{C}_5^H|)\geq 2r'.\]
	Therefore, \(H'\) is an \(r'\)-reducing subgraph of \(G\).
\end{proof}

The next two results show that, although \(\pn(K_5^-)=3\),
any proper subdivision of \(K_5^-\) can be decomposed into two paths.

\begin{proposition}\label{prop:K5--simple-subdivision}
    If \(G\) is a graph obtained from \(K_5^-\) by a subdivision of one of its edges, then \(\pn(G) = 2\).
\end{proposition}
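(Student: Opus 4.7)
The plan is to combine a trivial lower bound with a direct application of Lemma~\ref{lemma:cycle+path}(ii). Let $G$ be obtained from $K_5^-$ by subdividing an edge $e$, introducing a new vertex $w$ of degree~$2$. Since $G$ still contains a vertex of degree~$4$, it is not a single path, and hence $\pn(G)\geq 2$.

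For the upper bound, I would exhibit a decomposition of $G$ into a cycle of length $5$ and a path with at most three chords, and then invoke Lemma~\ref{lemma:cycle+path}(ii). The key structural observation about $K_5^-$ is that whenever $C$ is a Hamilton cycle of $K_5^-$ (i.e. a $5$-cycle avoiding the absent edge of $K_5^-$), the complement $K_5^-\!-E(C)$ is a Hamilton \emph{path} on the same five vertices. Indeed, $K_5-E(C)$ is itself a Hamilton $5$-cycle of $K_5$, and deleting the absent edge of $K_5^-$ from this cycle produces a Hamilton path whose endpoints are exactly the two vertices of degree~$3$ in $K_5^-$.

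The central claim is that for every edge $e$ of $K_5^-$ there exists a Hamilton cycle $C$ of $K_5^-$ with $e\notin E(C)$. I would verify this by checking one representative edge in each of the two orbits of the automorphism group of $K_5^-$, namely an edge joining two degree-$4$ vertices and an edge joining a degree-$4$ vertex with a degree-$3$ vertex; in each case a short explicit cycle does the job. With such a $C$ fixed, $e$ lies in the complementary Hamilton path $P$. After subdivision, $P$ becomes a path $\tilde P$ on six vertices with five edges: the three edges of $P$ other than $e$ are still chords of $C$, while the two new edges incident to $w$ are not chords because $w\notin V(C)$.

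Consequently, $G$ decomposes as $C\cup \tilde P$ with $|E(C)|=5$ and $\tilde P$ containing exactly three chords of $C$, and Lemma~\ref{lemma:cycle+path}(ii) yields $\pn(G)\leq 2$. The only step requiring a small amount of work is the observation that a Hamilton cycle of $K_5^-$ avoiding a prescribed edge always exists; this is essentially the whole content of the proof and amounts to exhibiting one cycle per orbit.
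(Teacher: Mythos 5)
Your proof is correct, but it takes a genuinely different route from the paper's. The paper proves Proposition~\ref{prop:K5--simple-subdivision} by brute force: it labels the degree-$3$ vertices $u,v$ and the degree-$4$ vertices $x,y,z$ of $K_5^-$, splits into two cases according to whether the subdivision vertex $w$ has a degree-$3$ neighbour, and in each case writes down an explicit decomposition into two Hamiltonian paths. You instead reduce to Lemma~\ref{lemma:cycle+path}(ii): you split $K_5^-$ into a Hamilton cycle $C$ avoiding the subdivided edge $e$ and the complementary Hamilton path containing $e$, and observe that subdividing $e$ drops the number of chords of $C$ carried by that path from four to three, which is precisely the threshold of Lemma~\ref{lemma:cycle+path}(ii) (the paper itself remarks, right after that lemma, that $K_5^-$ witnesses its tightness at four chords). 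All your steps check out: $K_5-E(C)$ is again a Hamilton $5$-cycle, so $K_5^--E(C)$ is a Hamilton path joining the two degree-$3$ vertices; the required Hamilton cycle avoiding a prescribed edge exists in both edge orbits (in the notation above, $uxzvyu$ avoids $xy$ and $uyxvzu$ avoids $ux$); and the two new edges at $w$ are not chords since $w\notin V(C)$. Your approach buys a conceptual explanation of \emph{why} subdivision lowers $\pn$ from $3$ to $2$ and reuses machinery already proved, while the paper's buys self-containedness and explicit decompositions. The only things to tidy up are that you should actually exhibit the two representative cycles rather than promise them, and that your separate lower-bound paragraph is redundant, since Lemma~\ref{lemma:cycle+path}(ii) already asserts $\pn=2$ rather than $\pn\leq 2$.
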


\begin{proof}

    Let \(G\) be as in the statement.
    Let \(u,v\) and \(x,y,z\) be the vertices of \(G\) with degree~\(3\) and~\(4\), respectively,
    and let \(w\) be the vertex of degree \(2\) of \(G\).
    There are two cases, depending on whether \(w\) has a neighbor of degree \(3\).
    First, suppose that \(w\) has a neighbor of degree \(3\).
    Since the vertices of degree \(3\) in \(K_5^-\) are are not adjacent,
    the other neighbor of \(w\) is a vertex of degree \(4\).
    We may suppose, without loss of generality, that \(N(w) = \{u,x\}\).
    In this case, \(\{wxyvzu,wuyzxv\}\) is a path decomposition of \(G\) with size \(2\) (see Figure~\ref{fig:sub-K5-a}).
    Thus, we may suppose that the two neighbors of \(w\) have degree \(4\).
    Suppose, without loss of generality, that \(N(w) = \{y, z\}\).
    In this case, \(\{wzxvyu, wyxuzv,\}\) is a path decomposition of \(G\) with size \(2\) (see Figure~\ref{fig:sub-K5-b}).
\end{proof}

\begin{figure}[h]
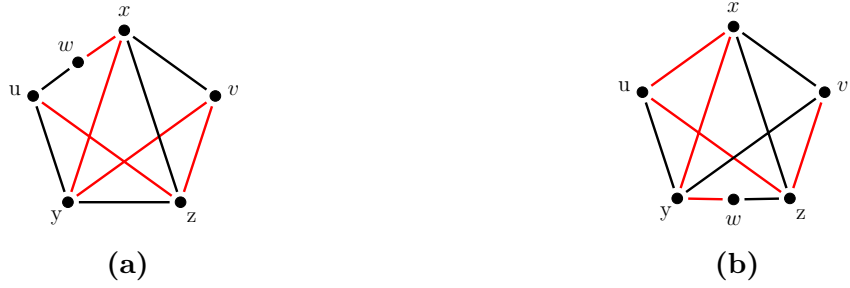

	\centering
	\begin{subfigure}[b]{.5\linewidth}
	\centering\scalebox{.7}{\input{Figures/sub-k5-a.tikz}}%
	\caption{}\label{fig:sub-K5-a}
	\end{subfigure}%
	\begin{subfigure}[b]{.5\linewidth}
	\centering\scalebox{.7}{\input{Figures/sub-k5-b.tikz}}%
	\caption{}\label{fig:sub-K5-b}
	\end{subfigure}%

	\caption{Decompositions into two paths of the graph obtained from \(K_5^-\) by subdividing one of its edges.}
\end{figure}

\begin{corollary}\label{cor:K5--subdivision}
    If \(G\) is a proper subdivision of \(K_5^-\), then \(\pn(G) = 2\).
\end{corollary}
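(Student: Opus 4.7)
The plan is to argue by induction on the number $k$ of subdivision vertices (i.e., the degree-$2$ vertices) of $G$. The base case $k=1$ is exactly Proposition~\ref{prop:K5--simple-subdivision}, which provides a decomposition of $G$ into two paths. For the inductive step, I would assume the statement holds for every proper subdivision of $K_5^-$ with fewer than $k$ subdivision vertices, and then reduce $G$ to such a graph by ``undoing'' a single subdivision.

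Concretely, given $G$ with $k \geq 2$ subdivision vertices, pick any subdivision vertex $w$ and let $x,y$ be its two neighbors in $G$. The vertex $w$ lies in the interior of the subdivision path in $G$ replacing some edge $uv$ of $K_5^-$, say as $u = v_0 v_1 \cdots v_\ell = v$ with $w = v_i$ and $1\le i\le \ell-1$, so that $\{x,y\}=\{v_{i-1},v_{i+1}\}$. Setting $G' \coloneqq (G - w) + xy$, the edge $xy$ is not already present in $G$ (the subdivision path is the only $u$--$v$ walk using these vertices), and $G'$ is again a subdivision of $K_5^-$, now with exactly $k-1\ge 1$ subdivision vertices. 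Hence $G'$ is a proper subdivision and, by the inductive hypothesis, admits a decomposition $\{P_1',P_2'\}$ into two paths.

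To lift this back to $G$, note that $w \notin V(G')$, so $w$ appears in neither $P_1'$ nor $P_2'$. The edge $xy$ lies in exactly one of the two paths, say $P_1'$; replacing $xy$ by the length-$2$ subpath $xwy$ in $P_1'$ yields a sequence of distinct vertices (since $w$ was absent and $x,y$ appear consecutively), hence a path $P_1$ in $G$. Then $\{P_1,P_2'\}$ is a path decomposition of $G$ of size $2$, giving $\pn(G)\le 2$. The matching lower bound $\pn(G)\ge 2$ is immediate because $G$ inherits the three vertices of degree $4$ from $K_5^-$, and no path has a vertex of degree greater than $2$.

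The only mildly delicate point will be checking that the reduction $G \mapsto G'$ genuinely produces a simple graph (no parallel edge is created by adding $xy$) and remains a subdivision of $K_5^-$; both follow from the observation that $w$ is an internal vertex of a subdivision path, so its neighbours $x,y$ are not otherwise joined in $G$. Beyond that, the argument is entirely routine and transfers the witness decomposition from $G'$ to $G$ without loss, terminating at the single case covered by Proposition~\ref{prop:K5--simple-subdivision}.
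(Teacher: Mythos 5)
Your proof is correct, and it is essentially the argument the paper intends: the paper states the corollary without proof, treating it as immediate from Proposition~\ref{prop:K5--simple-subdivision} by exactly the suppression-and-reinsertion induction you describe (each extra subdivision vertex is absorbed by replacing the edge $xy$ with the subpath $xwy$ in whichever of the two paths contains it). Your checks that no parallel edge arises and that the degree-$4$ vertices force $\pn(G)\ge 2$ are the right details to verify, and they go through.
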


\subsection{Liftings}

In this section, we present two results that allow us to obtain reducing subgraphs
in situations where we use liftings.
Let \(x, y, z\) be three vertices in a graph \(G\).
We say that \(\lift{xyz}\) is a \emph{valid lifting}
 if \(xy, yz \in E(G)\) and \(xz \notin E(G)\).
 In this case, the \emph{lifting} of \(xy,yz\) at \(y\) is the operation
of removing the edges \(xy\) and \(yz\) and adding the edge \(xz\),
which yields the graph \(G'=G-xy-yz+yz\).
Alternatively, we denote \(G'\) by \(G+\lift{xyz}\) and \(G\) by \(G' - \lift{xyz}\).
If \(y\) is a vertex of degree \(2\), the (only) lifting at \(y\) is called a \emph{suppression} of~\(y\).

\begin{lemma}\label{lemma:simple-lifting}
	Let \(G\) be a connected graph and \(P\subseteq G\) be a path.
	Let \(x,v,y\in V(G)\) such that \(\{x,v,y\}\cap V(P)\neq\emptyset\)
	and that \(\lift{zvy}\) is a valid lifting in \(G-E(P)\).
	If at least two vertices of \(G\) are isolated in \(G'=(G-E(P)) + \lift{xvy}\)
	and every component of~\(G'\) is a Gallai graph or isomorphic to \(K_3\) or to \(K_5^-\),
	then \(G\) contains a reducing subgraph.
\end{lemma}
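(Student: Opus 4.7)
The plan is to exhibit a reducing subgraph of $G$ by starting from the natural candidate $H_0 \Def P + xv + vy$ and, when necessary, augmenting it with the $K_3$ and $K_5^-$ components of $G - E(H_0)$ via Lemma~\ref{lemma:reducing+triangles+K5-}.

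First, since $\lift{xvy}$ is a valid lifting in $G - E(P)$, both $xv$ and $vy$ belong to $E(G) \setminus E(P)$, so $H_0 \subseteq G$. The set $\{P,\,xvy\}$ is a decomposition of $H_0$, hence $\pn(H_0) \leq 2$. Moreover, $G - E(H_0) = G' - xy$, so every vertex isolated in $G'$ (all of which lie outside $\{x, y\}$, since $x$ and $y$ are incident to the new edge $xy$ in $G'$) remains isolated in $G - E(H_0)$. Thus $G - E(H_0)$ has at least as many isolated vertices as $G'$, namely at least two.

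Next, I would case-split on the shape of $H_0$. If $v \notin V(P)$ and precisely one of $x, y$ is an endpoint of $P$ while the other lies outside $V(P)$, then $H_0$ is a single path and $\pn(H_0) = 1$; the two isolated vertices already certify that $H_0$ is a $1$-reducing subgraph. Otherwise $\pn(H_0) = 2$, and one must exhibit four isolated vertices in $G - E(H_0)$. I would enumerate sub-cases based on the incidences of $x, v, y$ with $V(P)$ and exploit two additional sources of isolation: when $v \in V(P)$ is isolated in $G'$ then all of $v$'s edges in $G$ lie in $E(H_0)$, so $v$ is isolated in $G - E(H_0)$; and when $\deg_{G'}(x) = 1$ or $\deg_{G'}(y) = 1$, the corresponding vertex becomes isolated once $xy$ is removed. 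Combined with the two vertices already in $G'$'s isolated set (which are disjoint from $\{x, y\}$), these sources should yield the required count of four in each sub-case. Should some configuration still fall short, I would invoke Lemma~\ref{lemma:reducing+triangles+K5-} to absorb $K_3$ and $K_5^-$ components of $G - E(H_0)$ into $H_0$, gaining $3$ or $5$ isolated vertices per component at a cost of only $1$ or $2$ additional paths, and thereby restoring the $r$-reducing inequality.

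The principal obstacle is precisely this sub-case analysis: pinning down, for each relative placement of $v, x, y$ with respect to $P$, that either $G - E(H_0)$ contains at least four isolated vertices outright or that sufficiently many absorbable $K_3$ or $K_5^-$ components sit adjacent to $H_0$ to compensate via Lemma~\ref{lemma:reducing+triangles+K5-}. Once this is handled, a final call to Lemma~\ref{lemma:reducing+triangles+K5-} folds any residual $K_3$ or $K_5^-$ components into the current subgraph and delivers the desired reducing subgraph of $G$.
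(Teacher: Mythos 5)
Your candidate $H_0 = P + xv + vy$ is too small, and this is a genuine gap rather than a deferred technicality. With $\pn(H_0)=2$ you need \emph{four} isolated vertices in $G-E(H_0)=G'-xy$, but the hypothesis only supplies two isolated vertices of $G'$, and your extra ``sources of isolation'' need not exist: $v$ may have further incident edges in $G-E(P)$ beyond $xv$ and $vy$, and $x,y$ typically sit inside a large component of $G'$ and do not become isolated when $xy$ is deleted. Your fallback of invoking Lemma~\ref{lemma:reducing+triangles+K5-} cannot rescue this, for two reasons: that lemma only \emph{preserves} the reducing property of a subgraph that is already $r$-reducing (it does not ``restore'' a failed inequality), and it only absorbs components isomorphic to $K_3$ or $K_5^-$, whereas the component of $G-E(H_0)$ containing $x$ and $y$ is in general a large Gallai graph (or worse, a Gallai graph of $G'$ with the edge $xy$ deleted, which need not be Gallai or even connected). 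So in the basic configuration --- two isolated vertices in $G'$ and $xy$ lying in a big Gallai component --- your argument produces no reducing subgraph.

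The paper's proof avoids all of this by choosing a larger subgraph: it takes the entire component $C'$ of $G'$ containing the edge $xy$, undoes the lifting to get $C = C' - \lift{xvy} \subseteq G-E(P)$, and sets $H = P + C$ with $r = \lfloor |V(C')|/2\rfloor + 1$. Then $G - E(H) = G' - E(C')$, so \emph{all} $|V(C')|$ vertices of $C'$ become isolated in addition to the two already isolated in $G'$, giving at least $2 + |V(C')| \ge 2r$ isolated vertices; and $\pn(H) \le \pn(C) + 1 \le r$ follows from $C'$ being Gallai, or from Corollary~\ref{cor:K5--subdivision} when $C'\simeq K_5^-$ (so that $C$ is a subdivision of $K_5^-$), or from Corollary~\ref{lemma:C4+path} when $C'\simeq K_3$ (so that $C$ is a $4$-cycle meeting $P$). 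Absorbing the whole component is the idea your proposal is missing, and without it the isolated-vertex count does not close.
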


\begin{proof}
	Let \(G, P, G'\) and \(x,v,y\) be as in the statement.
	Let \(C'\) be the component of \(G'\) containing \(xy\),
	and let \(C = C' - \lift{xvy}\).
	Put \(H = P+C\) and \(r = \lfloor |V(C')|/2\rfloor + 1\).
	If~\(C'\) is a Gallai graph, then \(\pn(C)\leq\pn(C')\leq \lfloor |V(C')|/2\rfloor\).
	If~\(C'\simeq K_5^-\),
	then~\(C\) is a subdivision of \(K_5^-\) and, by Corollary~\ref{cor:K5--subdivision},
	\(\pn(C) = 2  = \lfloor |V(C')|/2\rfloor\).
	In these cases, we have \(\pn(H) \leq \pn(C) + \pn(P) \leq \lfloor |V(C')|/2 \rfloor + 1 = r\).
	If~\(C'\simeq K_3\), then \(C\) is a cycle of length \(4\).
	Since \(\{x,v,y\}\cap V(P)\neq\emptyset\), the graph \(H\) is connected and,
	by Corollary~\ref{lemma:C4+path}, \(\pn(H) = 2 = 1 + \lfloor |V(C')|/2\rfloor\).
	Now, note that \(G - E(P) - E(C) = G' - E(C')=G''\).
	Since \(G'\) has at least two isolated vertices and \(C'\) is a component of \(G'\),
	the graph \(G''\) has at least \(2 + |V(C')| \geq 2r\) isolated vertices.
	Hence,~\(H\) is an \(r\)-reducing subgraph of~\(G\).
\end{proof}

\begin{lemma}\label{lemma:double-lifting}
	Let \(G\) be a connected graph and \(P\subseteq G\) be a path.
	Let \(x,v,y,a,u,b\in V(G)\) be such that \(v\neq u\),
	\(\{x,v,y\}\cap V(P)\neq\emptyset\) and
	\(\{a,u,b\}\cap V(P)\neq\emptyset\),
	and that \(\lift{xvy},\lift{aub}\) are valid liftings in \(G-E(P)\).
	Moreover, suppose that \(|\{a,b\}\cap N(v)|,|\{x,y\}\cap N(u)|\leq 1\).
	If at least two vertices of \(G\) are isolated in \(G'= (G-E(P)) + \lift{xvy} + \lift{aub}\),
	and every component of \(G'\) is a Gallai graph or isomorphic to \(K_3\) or to \(K_5^-\),
	then \(G\) contains a reducing subgraph.
\end{lemma}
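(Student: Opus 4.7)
The plan is to adapt the proof of Lemma~\ref{lemma:simple-lifting} by reversing both liftings and gluing the resulting subgraphs to $P$ to obtain a reducing subgraph $H$. The argument splits into two cases according to whether the edges $xy$ and $ab$ added by the liftings lie in the same component of $G'$ or in two distinct components.

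\textbf{Case 1: $xy \in E(C_1')$ and $ab \in E(C_2')$ with $C_1' \neq C_2'$ components of $G'$.} I would set $C_1 \Def C_1' - \lift{xvy}$, $C_2 \Def C_2' - \lift{aub}$ and $H \Def P + C_1 + C_2$. The intersection hypotheses make $H$ connected, and the neighborhood conditions $|\{a,b\} \cap N(v)|, |\{x,y\} \cap N(u)| \leq 1$ together with $v \neq u$ prevent $v$ from lying in $V(C_2')$ and $u$ from lying in $V(C_1')$, so that $V(C_1) \cap V(C_2) = \emptyset$. One then bounds $\pn(C_i)$ according to the isomorphism type of $C_i'$: if $C_i'$ is Gallai, a minimum path decomposition of $C_i'$ lifts through the subdivision obtained by reversing the lifting; if $C_i' \simeq K_5^-$, Corollary~\ref{cor:K5--subdivision} gives $\pn(C_i) = 2$; if $C_i' \simeq K_3$, then $C_i$ is a $4$-cycle and Corollary~\ref{cor:triangles+path} absorbs it at cost $+1$ into a minimum decomposition of the rest of $H$.

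\textbf{Case 2: $xy, ab \in E(C')$ for a single component $C'$ of $G'$.} I set $C \Def C' - \lift{xvy} - \lift{aub}$ and $H \Def P + C$, and case-split on the type of $C'$: if $C'$ is Gallai, then a minimum decomposition of $C'$ lifts through the two simultaneous subdivisions; if $C' \simeq K_5^-$, I would invoke a double-subdivision analog of Proposition~\ref{prop:K5--simple-subdivision}, in which the neighborhood hypothesis rules out the subdivision patterns that would force the path number to exceed $2$; if $C' \simeq K_3$, a direct calculation handles the few configurations (the resulting $C$ is either a $5$-cycle or isomorphic to $K_{2,3}$, both absorbable into $P$ using Lemma~\ref{lemma:cycle+path}(ii)).

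In both cases the isolated-vertex count mirrors Lemma~\ref{lemma:simple-lifting}: one checks that $G - E(H) = G' - \bigcup_i E(C_i')$, so the vertices of the affected components become isolated in $G - E(H)$, and together with the two isolated vertices guaranteed in $G'$ this gives at least $\sum_i |V(C_i')| + 2 \geq 2r$ isolated vertices, where $r$ matches the bound on $\pn(H)$ (namely $\lfloor |V(C_1')|/2 \rfloor + \lfloor |V(C_2')|/2 \rfloor + 1$ in Case~1 and $\lfloor |V(C')|/2 \rfloor + 1$ in Case~2). The main obstacle is the $K_5^-$ subcase of Case~2: verifying that every double subdivision of $K_5^-$ compatible with the neighborhood hypothesis still has path number at most $2$, a finite but delicate extension of Proposition~\ref{prop:K5--simple-subdivision} where the conditions $|\{a,b\} \cap N(v)|, |\{x,y\} \cap N(u)| \leq 1$ play their essential role.
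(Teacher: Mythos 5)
Your overall architecture matches the paper's proof: the same two cases (both new edges \(xy\) and \(ab\) in one component of \(G'\), or in two distinct components), the same subgraphs \(H\), the same values of \(r\), and the same isolated-vertex count \(2+\sum_i|V(C_i')|\geq 2r\). The problem is that you have misplaced where the hypothesis \(|\{a,b\}\cap N(v)|,|\{x,y\}\cap N(u)|\leq 1\) does its work, and this leaves a gap at exactly the subcase you dismiss as ``a direct calculation.''

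The \(K_5^-\) subcase of your single-component case is not the obstacle you fear: \(C=C'-\lift{xvy}-\lift{aub}\) is a proper subdivision of \(K_5^-\) (two distinct edges, each subdivided once), and Corollary~\ref{cor:K5--subdivision} already gives \(\pn(C)=2\) for \emph{every} proper subdivision of \(K_5^-\); no ``double-subdivision analog'' of Proposition~\ref{prop:K5--simple-subdivision} and no neighborhood hypothesis is needed there. The hypothesis is genuinely needed only in the \(K_3\) subcase of the single-component case. There \(C\) is a \(5\)-cycle on \(\{x,v,y,a,u,b\}\) (not \(K_{2,3}\): two distinct edges of a triangle share a vertex, so \(\{a,b\}\cap\{x,y\}\neq\emptyset\) and subdividing each once yields \(C_5\); and Lemma~\ref{lemma:cycle+path}(ii) would not apply to \(K_{2,3}\) anyway, since it is not a cycle). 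To invoke Lemma~\ref{lemma:cycle+path}(ii) you must check that \(P\) contains at most three chords of this \(5\)-cycle. Of the five non-adjacent pairs of \(C\), two are excluded precisely by the neighborhood conditions: since one vertex of \(\{x,y\}\) already lies in \(\{a,b\}\), the bounds force \(v\) to be non-adjacent to the remaining vertex of \(\{a,b\}\) and \(u\) to be non-adjacent to the remaining vertex of \(\{x,y\}\), leaving at most three possible chords. Without this verification your appeal to Lemma~\ref{lemma:cycle+path}(ii) is unsupported — and this is the only point in the whole proof where the neighborhood hypothesis is actually used.
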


\begin{proof}
	Let \(G, P, G'\) and \(x, v, y, a, u, b\) be as in the statement.
	First, suppose that there is only one component, say \(C'\), of \(G'\) containing the edges \(xy\) and \(ab\).
	Let \(C = C' - \lift{xvy} - \lift{aub}\) and \(H = P + C\).
	Since \(\{x,v,y\}\cap V(P)\neq\emptyset\) and \(\{a,u,b\}\cap V(P)\neq\emptyset\), the graph~\(H\) is connected.
	Put \(r = \lfloor |V(C')|/2\rfloor + 1\).
	If \(C'\) is a Gallai graph or \(C'\simeq K_5^-\), then \(\pn(C)\leq\pn(C')\leq \lfloor |V(C')|/2\rfloor\).
	Thus, \(\pn(H) \leq \pn(C) + \pn(P) \leq \lfloor |V(C')|/2 \rfloor + 1 = r\).
	If \(C'\simeq K_3\), then \(C\) is a cycle of length \(5\) and \(u,b,y,v,x,a\in V(C)\)
	(in this case, we have \(\{a,b\}\cap\{x,y\}\neq\emptyset\))
	and~\(P\) has at most three chords of \(C\), because \(|\{a,b\}\cap N(v)|,|\{x,y\}\cap N(u)|\leq 1\).
	Thus, by Lemma~\ref{lemma:cycle+path}(ii), \(\pn(H) =2 = 1 +\lfloor |V(C')|/2\rfloor= r\).
	In each of the cases above we have \(\pn(H) \leq  r\).
	Now, note that \(G - E(P) - E(C) = G' - E(C')=G''\).
	Since \(G'\) has two isolated vertices and \(C'\) is a component of \(G'\),
	the graph \(G''\) has \(2 + |V(C')| \geq 2r\) isolated vertices.
	Therefore,~\(H\) is an \(r\)-reducing subgraph of \(G\).

	Now, suppose that \(C'_1\) and \(C'_2\) are two distinct components of \(G'\) containing, respectively, \(xy\) and \(ab\).
	Let \(C_1 = C_1' - \lift{xvy}\), \(C_2 = C_2' -\lift{aub}\) and let \(H = P + C_1 + C_2\).
	Put \(r = \lfloor |V(C'_1)|/2\rfloor + \lfloor |V(C'_2)|/2\rfloor + 1\).
	We claim that \(H\) is an \(r\)-reducing subgraph of \(G\).
	Analogously to the case above,
	we have \(G - E(P) - E(C_1) - E(C_2) = G' - E(C'_1) - E(C'_2) = G''\),
	and \(G''\) has at least \(2 + |V(C'_1)| + |V(C'_2)|\geq 2r\) isolated vertices.
	Also,
	if \(C'_i\simeq K_5^-\) or \(C'_i\) is a Gallai graph, we have \(\pn(C_i)\leq \lfloor |V(C'_i)|/2\rfloor\), for \(i=1,2\).
	If \(C'_i\simeq K_3\), then \(C_i\) is a cycle of length \(4\).
	Put \(H' = P + C_1\).
	Since \(x,v,y\in V(C_1)\) and \(\{x,v,y\}\cap V(P)\neq\emptyset\), \(H'\) is connected.
	Again, we have \(\pn(H') \leq 1 + \lfloor |V(C'_1)|/2\rfloor\).
	(if \(C'_1\simeq K_3\), we use Lemma~\ref{lemma:cycle+path}(ii)).
	Now, note that \(H = H' + C_2\) is connected,
	because \(a,u,b\in V(C_2)\) and \(\{a,u,b\}\cap V(P)\neq\emptyset\).
	Analogously,
	if \(C'_2\simeq K_5^-\) or \(C'_2\) is a Gallai graph,
	\(\pn(H) \leq \pn(H') + \pn(C_2) \leq 1 + \lfloor |V(C'_1)|/2\rfloor + \lfloor |V(C'_2)|/2\rfloor \leq r\); and
	if \(C'_2\simeq K_3\), then by Lemma~\ref{lemma:cycle+path}(ii),
	we have \(\pn(H) \leq \pn(H') + 1 \leq 1 + \lfloor |V(C'_1)|/2\rfloor + \lfloor |V(C'_2)|/2\rfloor \leq r\).
	Therefore,~\(H\) is an \(r\)-reducing subgraph of \(G\).
\end{proof}

\section{Graphs with treewidth at most three}\label{sec:treewidth}

\newcommand{\note}[1]{\textcolor{magenta}{(#1)}}

In this section we verify Conjectures~\ref{conj:gallai} and~\ref{conj:hajos} for graphs with treewidth at most \(3\).
Let~\(k\) be a positive integer.
It is known that graphs with treewidth at most \(k\) are precisely the partial \(k\)-trees~\cite{Bodlaender1998}.
A graph \(G\) is a \emph{\(k\)-tree} if one of the following conditions holds:
(i) \(G\) is isomorphic to \(K_k\), or
(ii) \(G\) contains a vertex \(v\) such that \(G[N(v)]\simeq K_k\)
and \(G - v\) is a \(k\)-tree.
A vertex \(v\) of a \(k\)-tree~\(G\) is a \emph{terminal vertex} or, simply, a \emph{terminal}, of~\(G\)
if~\(d(v)=k\) or~\(G \simeq K_k\).
It is not hard to check that if \(G\) is a \(k\)-tree with at least \(k+1\) vertices and~\(v\) is a terminal of~\(G\),
then~\(G-v\) is a \(k\)-tree.
Therefore, it is possible to obtain a copy of \(K_k\) from any \(k\)-tree by a sequence of removals of terminal vertices.
The following facts show that every \(k\)-tree with at least~\(k+2\) vertices
contains at least two non-adjacent terminals.

\begin{fact}\label{fact:terminal-vertices}
	If \(G\) is a \(k\)-tree with at least \(k+2\) vertices,
	then every pair of terminals of \(G\) is non-adjacent.

\end{fact}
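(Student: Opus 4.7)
The plan is to argue by contradiction. Suppose $u$ and $v$ are terminals of $G$ with $uv \in E(G)$. Since $d(u) = d(v) = k$ and $G[N(u)] \simeq K_k$, the vertex $v$, lying in $N(u)$, is adjacent to every vertex of $N(u) - v$. These $k-1$ vertices, together with $u$, account for all $k$ neighbors of $v$, so $N(u) - v = N(v) - u$ and consequently $N[u] = N[v]$. In particular, $G[N[u]]$ is a copy of $K_{k+1}$.

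Next, I would invoke the remark in the paragraph preceding the fact: since $u$ is a terminal of $G$ and $|V(G)| \geq k+2 \geq k+1$, the graph $G' = G - u$ is again a $k$-tree, and it still has at least $k+1$ vertices. In $G'$, the vertex $v$ has lost exactly one neighbor (namely $u$), so $d_{G'}(v) = k - 1$.

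The contradiction will come from the following auxiliary claim, which I would prove on the side by induction on the number of vertices: every $k$-tree $H$ with $|V(H)| \geq k+1$ has $\delta(H) \geq k$. The base case is $H \simeq K_{k+1}$ (the unique $k$-tree on $k+1$ vertices), which has minimum degree exactly $k$. For the inductive step, any $k$-tree $H$ with $|V(H)| \geq k+2$ admits, by its recursive definition, a terminal $w$ of degree $k$ whose removal leaves a $k$-tree $H - w$ on at least $k+1$ vertices; by induction $\delta(H - w) \geq k$, and since every vertex other than $w$ has its degree in $H$ at least as large as in $H - w$, we get $\delta(H) \geq k$. Applied to $G'$, this gives $d_{G'}(v) \geq k$, contradicting $d_{G'}(v) = k - 1$.

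The only delicate point is the role of the hypothesis $|V(G)| \geq k+2$: it is exactly what guarantees that $G - u$ still has at least $k+1$ vertices so that the minimum-degree lemma applies. The hypothesis is also necessary, since in $K_{k+1}$ all vertices are terminals and all pairs are adjacent, showing that the statement fails at $|V(G)| = k+1$.
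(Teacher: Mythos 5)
Your proof is correct and follows essentially the same route as the paper: both arguments remove one terminal and then observe that the remaining $k$-tree, having at least $k+1$ vertices, has minimum degree at least $k$, which forces every neighbour of a terminal to have degree at least $k+1$ in $G$. The only differences are cosmetic — you spell out the minimum-degree lemma by induction where the paper asserts it in one line, and your opening paragraph establishing $N[u]=N[v]$ is not actually needed for the contradiction.
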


\begin{proof}
  	Let \(G\) be a \(k\)-tree with at least \(n\geq k+2\) vertices
  	and let \(v\) be a terminal of \(G\).
	Since \(v\) is a terminal, \(G'=G-v\) is a \(k\)-tree.
	Let \(u\) be a neighbor of \(v\).
	Since~\(G'\) has at least \(k+1\) vertices, \(d_{G'}(u)\geq k\),
	which implies \(d_G(u) \geq k+1\), hence \(u\) is not a terminal.
\end{proof}

\begin{fact}\label{fact:at-least-two-terminals}
	If \(G\) is a \(k\)-tree with at least \(k+2\) vertices,
	then \(G\) contains at least two terminals.
\end{fact}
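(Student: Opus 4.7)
The plan is to prove this by induction on $n=|V(G)|$.

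For the base case $n=k+2$, I first note (easily via the recursive definition) that every $k$-tree on $k+1$ vertices is isomorphic to $K_{k+1}$. Since $G\not\simeq K_k$, the recursive definition of a $k$-tree provides a vertex $v$ with $G[N(v)]\simeq K_k$; in particular $d(v)=k$, so $v$ is a terminal and $G-v\simeq K_{k+1}$. Because $v$ has exactly $k$ neighbors inside the $(k+1)$-vertex graph $G-v$, there is a unique vertex $u\in V(G-v)$ with $uv\notin E(G)$. Then $d_G(u)=d_{G-v}(u)=k$, so $u$ is a second terminal of $G$.

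For the inductive step, assume $n\geq k+3$ and that the statement holds for $k$-trees with fewer vertices. Pick a terminal $v$ of $G$; then $G'\coloneqq G-v$ is a $k$-tree on $n-1\geq k+2$ vertices, so by induction it contains two terminals $u_1$ and $u_2$. The plan is to show that at least one of $u_1,u_2$ remains a terminal in $G$, which together with $v$ yields the conclusion.

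The key observation is that each $u_i$ is a terminal of $G$ precisely when $u_iv\notin E(G)$: the edges incident to $u_i$ in $G$ are the edges incident to $u_i$ in $G'$ together with $u_iv$ if it is present, so $d_G(u_i)=k$ iff $u_iv\notin E(G)$, and in that case $N_G(u_i)=N_{G'}(u_i)$ still induces $K_k$. Suppose, toward a contradiction, that neither $u_1$ nor $u_2$ is a terminal of $G$; then $u_1v,u_2v\in E(G)$, hence $u_1,u_2\in N_G(v)$. Since $G[N_G(v)]\simeq K_k$, this gives $u_1u_2\in E(G)$, and because $v\notin\{u_1,u_2\}$ we obtain $u_1u_2\in E(G')$. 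But Fact~\ref{fact:terminal-vertices} applied to $G'$ (which has at least $k+2$ vertices) asserts that any two terminals of $G'$ are non-adjacent---a contradiction.

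The main obstacle is precisely this last case: passing from $G'$ back to $G$ by reinstating $v$ can destroy terminals of $G'$ that happen to be adjacent to $v$. Fact~\ref{fact:terminal-vertices} is exactly what rules out the worst scenario, since no two terminals of $G'$ can simultaneously fit inside the $k$-clique $N_G(v)$ without becoming adjacent.
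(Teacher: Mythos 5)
Your proof is correct and follows essentially the same route as the paper: induction on $n$, removing a terminal $v$, and using Fact~\ref{fact:terminal-vertices} to conclude that $v$ can be adjacent to at most one of the two terminals of $G-v$, so one of them survives as a terminal of $G$. The only cosmetic differences are that you derive the base case $G\simeq K_{k+2}-e$ explicitly and phrase the inductive step as a contradiction, whereas the paper states both directly.
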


\begin{proof}
  	Let \(G\) be a \(k\)-tree with at least \(n\geq k+2\) vertices.
	The proof follows by induction on \(n\).
	If \(n = k+2\), then \(G\) is isomorphic to \(K_{k+2}-e\), and the statement holds.
	Thus, suppose \(n > k+2\) and let \(v\) be a terminal of \(G\).
	By the definition of terminal, \(G'=G-v\) is a \(k\)-tree.
	Since every pair of neighbors of \(v\) is adjacent,
	by Fact~\ref{fact:terminal-vertices}, \(v\) is adjacent to at most one terminal of \(G'\).
	By the induction hypothesis, there is at least one terminal, say~\(u\), of \(G'\) which is
	not adjacent to \(v\).
	Therefore, \(u\) is a terminal of \(G'\).
\end{proof}

We say that a graph \(G\) is a \emph{partial \(k\)-tree} if it is a subgraph of a \(k\)-tree \(G^*\) (see~\cite{ArnborgEtAl1990,Bodlaender1998}).
In this case, we say that \(G^*\) is an \emph{underlying \(k\)-tree} of \(G\).
The next fact shows that there is an underlying \(k\)-tree \(G^*\) of \(G\)
such that \(G\) contains every terminal vertex of \(G^*\).
In fact, one can prove that if \(G\) has at least \(k\) vertices, then \(G\) is a spanning subgraph of a \(k\)-tree,
but, in this paper, we do not make use of this fact.

\begin{fact}\label{lem:k-tree-spanning}
  If \(G\) is a partial \(k\)-tree with at least \(k\) vertices,
  then there exists an underlying \(k\)-tree \(G^*\) of \(G\) such that
  \(G\) contains every terminal vertex of \(G^*\).
\end{fact}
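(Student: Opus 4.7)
The plan is to take any underlying $k$-tree of $G$ and greedily trim away vertices that are terminals but lie outside $V(G)$, stopping once every terminal of the current tree sits inside $V(G)$. Since $G$ is a partial $k$-tree, the definition gives us an underlying $k$-tree $\tilde{G}$ with $V(G) \subseteq V(\tilde{G})$ and $E(G) \subseteq E(\tilde{G})$. I will build a finite sequence $\tilde{G} = \tilde{G}_0 \supseteq \tilde{G}_1 \supseteq \cdots$ by the following rule: while there exists a terminal $v$ of $\tilde{G}_i$ with $v \notin V(G)$, set $\tilde{G}_{i+1} = \tilde{G}_i - v$; otherwise stop and output $G^* = \tilde{G}_i$.

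To see that this rule is well-defined, observe two invariants. First, at every step $G \subseteq \tilde{G}_i$: indeed, since the removed $v$ is not a vertex of $G$, no edge of $G$ is destroyed. Second, each $\tilde{G}_i$ is itself a $k$-tree: each $\tilde{G}_i$ is obtained from the $k$-tree $\tilde{G}_{i-1}$ by deleting a terminal, and the remark stated just after the definition of terminal guarantees that removing a terminal from a $k$-tree with at least $k+1$ vertices yields a $k$-tree. The loop is applicable as long as $|V(\tilde{G}_i)| \geq k+1$, and since $|V(\tilde{G}_i)|$ strictly decreases at each iteration, the process terminates after finitely many steps.

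It remains to argue that the output $G^*$ has every terminal in $V(G)$. There are two ways the loop can halt: either the explicit stopping condition is met, in which case every terminal of $G^*$ lies in $V(G)$ by construction and we are done; or the loop cannot proceed because $|V(G^*)| = k$. In the latter case $G^* \simeq K_k$ (the only $k$-tree on $k$ vertices), so by the definition of terminal every vertex of $G^*$ is a terminal. The hypothesis $|V(G)| \geq k$ combined with $V(G) \subseteq V(G^*)$ and $|V(G^*)| = k$ forces $V(G) = V(G^*)$, so again all terminals of $G^*$ lie in $V(G)$.

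The only delicate point, and thus the main obstacle, is making sure the trimming process does not over-shrink the $k$-tree and lose vertices of $G$. This is precisely where the hypothesis $|V(G)| \geq k$ is used: it prevents the process from ever reducing the tree below $V(G)$, because we only remove vertices outside $V(G)$ and the shrinkage stops automatically once the tree has $k$ vertices. No further calculation is required.
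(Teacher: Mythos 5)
Your proof is correct and is essentially the paper's argument in iterative form: the paper picks an underlying $k$-tree with the minimum number of vertices and notes that a terminal outside $V(G)$ could be deleted, contradicting minimality, while you reach the same object by greedily trimming such terminals. Your explicit handling of the base case $|V(G^*)|=k$ (where the hypothesis $|V(G)|\geq k$ forces $V(G)=V(G^*)$) is a detail the paper leaves implicit, but the two proofs are the same in substance.
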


\begin{proof}
	Let \(G\) be as in the statement
	and let \(G^*\) be an underlying \(k\)-tree of \(G\)
	with a minimum number of vertices.
	Suppose, for a contradiction, that there is a terminal \(v\) of~\(G^*\)
	such that \(v\notin V(G)\).
	Then \(G^*-v\) is an underlying \(k\)-tree of \(G\),
	a contradiction to the minimality of \(G^*\).
\end{proof}

Partial \(3\)-trees are also known by their forbidden minors characterization~\cite{ArnborgEtAl1990}.
Therefore, every subgraph of a partial \(3\)-tree is also a partial \(3\)-tree,
and the graph obtained from a partial \(3\)-tree \(G\) by
suppressing a vertex of degree \(2\) is also a partial \(3\)-tree.
Let \(G\) be a partial \(k\)-tree,
and let \(G^*\) be an underlying \(k\)-tree of \(G\).
We say that a vertex \(v\) of \(G\) is \emph{terminal} if \(v\) is a terminal vertex in \(G^*\).
Therefore, if \(G\) is a partial \(k\)-tree with at least \(k+2\) vertices,
then \(G\) contains at least two non-adjacent terminals.
The following fact about partial \(k\)-trees will be used often in the proof of Theorem~\ref{theorem:treewidth-most-three}.

\begin{fact}\label{fact:remove-terminal}
	Let \(G\) be a partial \(k\)-tree with at least \(k+1\) vertices,
	\(v\) be a terminal of~\(G\),
	and \(S\) be a set of edges joining vertices in \(N(v)\)
	such that \(S\cap E(G)=\emptyset\).
	Then \(G-v+S\) is a partial \(k\)-tree.
\end{fact}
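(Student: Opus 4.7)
The plan is to invoke a specific underlying \(k\)-tree \(G^*\) of \(G\) that witnesses \(v\) being terminal, and then exploit the fact that in a \(k\)-tree the neighborhood of a terminal induces a \(K_k\), so that every edge of \(S\) is already present in \(G^*\).

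First, I would fix an underlying \(k\)-tree \(G^*\) of \(G\) in which \(v\) is a terminal vertex (the definition of terminal for a partial \(k\)-tree in the paper is relative to such a \(G^*\)). Since \(|V(G)|\geq k+1\), also \(|V(G^*)|\geq k+1\), so \(G^*\not\simeq K_k\) and consequently \(v\) must be terminal by virtue of the degree condition: \(d_{G^*}(v)=k\) and \(G^*[N_{G^*}(v)]\simeq K_k\). The paper already observes that removing a terminal from a \(k\)-tree with at least \(k+1\) vertices leaves a \(k\)-tree, so \(G^*-v\) is a \(k\)-tree.

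Second, I would verify the containment \(G-v+S\subseteq G^*-v\). Since \(G\subseteq G^*\), we have \(N_G(v)\subseteq N_{G^*}(v)\). By hypothesis each edge of \(S\) joins two vertices of \(N_G(v)\subseteq N_{G^*}(v)\), and because \(N_{G^*}(v)\) induces a \(k\)-clique in \(G^*\), every such edge is already in \(E(G^*)\); it does not involve \(v\), so it lies in \(E(G^*-v)\). Combined with \(E(G-v)\subseteq E(G^*-v)\), this yields \(G-v+S\subseteq G^*-v\), and therefore \(G-v+S\) is a partial \(k\)-tree.

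There is no substantive obstacle; the only point requiring a little care is the bookkeeping around the definition of ``terminal'' for a partial \(k\)-tree. Because the definition is relative to a chosen underlying \(k\)-tree, I must be careful to reuse that same \(G^*\) throughout the argument rather than passing to a different underlying \(k\)-tree (as would be supplied, for instance, by Fact~\ref{lem:k-tree-spanning}). Once that choice is fixed, the argument reduces to the purely structural observation that \(S\) lives inside a clique of \(G^*\).
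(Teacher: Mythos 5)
Your proof is correct and follows essentially the same route as the paper's: both fix the underlying \(k\)-tree \(G^*\) witnessing that \(v\) is terminal, observe that \(N_{G^*}(v)\) induces a \(K_k\) so that \(S\subseteq E(G^*)\), and conclude that \(G-v+S\) is contained in a \(k\)-tree (the paper uses \(G^*\) itself as the container, you use \(G^*-v\); both work). Your explicit remark about reusing the same \(G^*\) throughout is a sensible precaution but does not change the argument.
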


\begin{proof}
	Let \(G\), \(k\), and \(S\) be as in the statement,
	and let \(G^*\) be the underlying \(k\)-tree of~\(G\).
	Note that \(G^*-v\) is an underlying \(k\)-tree of \(G-v\).
	Since \(N_{G^*}(v)\simeq K_k\), we have \(S\subseteq E(G^*)\).
	Thus, \(G^*\) is an underlying \(k\)-tree of \(G-v+S\).
\end{proof}

\subsection{Double centered $3$-trees}

In this section, we present and characterize a family of \(3\)-trees
that will be useful in the proof of Theorem~\ref{theorem:treewidth-most-three}.
We say that a \(3\)-tree \(G\) is \emph{double centered}
if there are two vertices \(a,b\in V(G)\) such that every terminal vertex of \(G\) different from \(a,b\) is adjacent to~\(a\) and~\(b\)
(see Figure~\ref{fig:example-centered-3-trees}).
In this case, we say that \(a,b\) are the \emph{centers} of \(G\).
The characterization of double-centered $3$-trees presented here is given in terms of \emph{graph joins}.
Given two vertex-disjoint graphs \(H_1\) and \(H_2\),
we say that a graph~\(G\) is the \emph{join} of \(H_1\) and \(H_2\), denoted by \(H_1\vee H_2\),
if \(G\) is the graph obtained from \(H_1 + H_2\) by joining every vertex of \(H_1\) to every vertex of \(H_2\),

i.e., \(V(G) = V(H_1) \cup V(H_2)\) and \(E(G) = E(H_1) \cup E(H_2) \cup \{uv\colon u\in V(H_1), v\in V(H_2)\}\).
\begin{proposition}\label{prop:double-centered-3-tree}
    Let \(G\) be a double centered \(3\)-tree with centers \(a\) and \(b\).
    There is a tree~\(T\) such that \(G\simeq T\vee K_2\),
    where \(V(T) = V(G) -a -b\).
    Moreover, the terminals of \(G - a - b\) are precisely the leafs of \(T\).
\end{proposition}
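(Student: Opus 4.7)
The plan is to proceed by induction on $n = |V(G)|$. In the base case $n = 3$, we have $G \simeq K_3$; setting $T$ to be the trivial tree on the single vertex $c$ of $V(G) - a - b$, we obtain $G \simeq T \vee K_2$, and the lone leaf $c$ of $T$ is the lone terminal of $G - a - b$.

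For the inductive step $n \geq 4$, I would first produce a terminal $v$ of $G$ with $v \notin \{a, b\}$. Fact~\ref{fact:at-least-two-terminals} gives at least two terminals, and for $n = 4$ (so $G \simeq K_4$) every vertex is terminal; for $n \geq 5$ the only way this can fail is if the terminal set of $G$ equals $\{a, b\}$, a degenerate configuration discussed below. With such a $v$ in hand, the double-centered hypothesis yields $va, vb \in E(G)$, and since $N_G(v)$ induces a $K_3$, the third neighbor $w \notin \{a, b\}$ satisfies $wa, wb, ab \in E(G)$; in particular $ab$ is always an edge.

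Next I would verify that $G' = G - v$ is again a double-centered $3$-tree with centers $a, b$: removing a terminal from a $3$-tree yields a $3$-tree, and any terminal $u$ of $G'$ with $u \notin \{a, b\}$ either was already a terminal of $G$ (hence adjacent to $a, b$ by hypothesis) or lies in $N_G(v) - \{a, b\} = \{w\}$ (also adjacent to $a, b$). By induction there is a tree $T'$ on $V(G) - \{a, b, v\}$ with $G' \simeq T' \vee K_2$ whose leaves are the terminals of $G' - a - b$. Attaching $v$ to $w$ as a new pendant yields a tree $T$ on $V(G) - \{a, b\}$, and combining the join $T' \vee K_2$ with the edges $va, vb, vw$ gives $G \simeq T \vee K_2$. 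The leaf/terminal correspondence for $G - a - b$ transfers: the added vertex $v$ has degree $1$ in $G - a - b = (G' - a - b) + v + vw$ and becomes a leaf/terminal, while $w$ loses that status exactly when it had been a leaf of $T'$, matching the combinatorics of attaching a pendant in $T$.

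The principal difficulty is Step~1, the existence of a terminal $v \notin \{a, b\}$. The graph $K_5^-$ illustrates the subtlety: its only terminals are the two non-adjacent vertices of degree~$3$, and declaring these to be the ``centers'' makes the double-centered condition vacuous while the conclusion $G \simeq T \vee K_2$ fails (as the missing edge would have to join the two members of $K_2$). I therefore plan to exploit the observation that any valid join decomposition forces $ab \in E(G)$: when $n \geq 5$, if every terminal lay in $\{a, b\}$, then both $a$ and $b$ would be terminals and Fact~\ref{fact:terminal-vertices} would give $ab \notin E(G)$, ruling out the sought-for decomposition and allowing this degenerate situation to be discarded. Once it is eliminated, the inductive machinery runs smoothly.
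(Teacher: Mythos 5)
Your inductive argument is essentially the paper's own: pick a terminal \(v\notin\{a,b\}\), note that its third neighbour \(w\) together with \(a,b\) forms the clique \(N(v)\), check that \(G-v\) is still a double centered \(3\)-tree with the same centers (the only candidate new terminal being \(w\), which is adjacent to both centers), and attach \(v\) to \(w\) as a pendant of the tree obtained by induction. That core is correct, and you treat the ``moreover'' clause about leaves more explicitly than the paper does.

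The one place you diverge is your Step 1, and there your repair does not work. You discard the configuration ``every terminal of \(G\) lies in \(\{a,b\}\)'' on the grounds that the sought-for join decomposition would force \(ab\in E(G)\) while Fact~\ref{fact:terminal-vertices} forces \(ab\notin E(G)\). That argument shows the \emph{conclusion} fails in that configuration; it does not show the configuration cannot satisfy the \emph{hypothesis}. As you yourself observe, \(K_5^-\) with its two degree-\(3\) vertices declared as centers is vacuously double centered under the paper's definition, yet is not of the form \(T\vee K_2\) with \(K_2\) on the centers — so the proposition as literally stated is false for that instance, and no proof can recover it without an added assumption (e.g.\ that some terminal lies outside \(\{a,b\}\)). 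To be fair, the paper's proof has exactly the same blind spot: it writes ``let \(u\) be a terminal vertex of \(G\) different from \(a,b\)'' without justifying existence. In the only place the proposition is used, the centers are common neighbours of all terminals and hence, by Fact~\ref{fact:terminal-vertices}, are not terminals themselves, so the degenerate case never arises there. In short: same approach and same correct inductive core as the paper, you have correctly located a real imprecision in the statement, but your attempted dismissal of the degenerate case is circular rather than a proof.
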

\begin{proof}
    Let \(G\) and \(a,b\) be as in the statement.
    The proof follows by induction on the number \(n\) of vertices of \(G\).
    If \(n=3\), then \(G\simeq K_3\), and the result follows with \(T \simeq K_1\).
    Now, suppose \(n \geq 4\), and let \(u\) be a terminal vertex of \(G\) different from \(a,b\).
    By the definition of double centered $3$-trees, we have \(a,b\in N(u)\).
    Let \(v\) be the neighbor of \(u\) different from \(a,b\),
    and let \(G' = G-u\).
    Since \(G\) is a \(3\)-tree, we have \(N(u)\simeq K_3\), hence \(a,b\in N(v)\).
    Note that that every terminal of \(G\), except for \(u\), is a terminal of \(G'\).
    The only possible terminal of \(G'\) (different from \(a,b\)) that is not a terminal of \(G\) is \(v\).
    Since \(N(u)\) induces a complete subgraph of \(G'\), \(v\) is adjacent to \(a,b\).
    Thus every terminal of \(G'\) different from \(a,b\) is adjacent to \(a\) and \(b\),
    hence \(G'\) is a double centered \(3\)-tree with \(n-1\) vertices.
    By the induction hypothesis, there is a tree~\(T'\) such that  \(G' \simeq T'\vee K_2\).
    Let \(T\) be the the tree obtained from \(T'\) by adding the vertex \(u\) and the edge \(uv\).
    Clearly, we have \(G = T\vee K_2\).
\end{proof}

  \begin{figure}[ht]
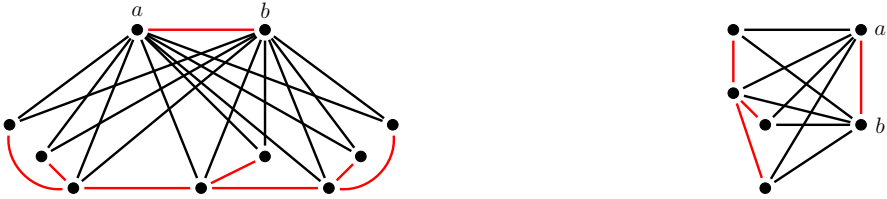

  \centering
  \begin{subfigure}[b]{.5\linewidth}
    \centering\scalebox{.7}{\input{Figures/example_centered_3_tree.tikz}}%
  \end{subfigure}%
  \begin{subfigure}[b]{.5\linewidth}
    \centering\scalebox{.7}{\input{Figures/example_centered_3_tree_b.tikz}}%
  \end{subfigure}%
  \caption{examples of double centered $3$-trees $T\vee K_2$ where $T$ is illustrated in red.}
  \label{fig:example-centered-3-trees}
\end{figure}

\subsection{Gallai's Conjecture for Graphs with Treewidth at most $3$}

In this section, we verify
Conjecture~\ref{conj:gallai} for graphs with treewidth at most \(3\).
In fact, we prove a slightly stronger result.
We prove that if \(G\) is a graph with \(n\) vertices and treewidth at most $3$,
then \(\pn(G) \leq \lfloor n/2\rfloor\), or \(G\) is isomorphic to \(K_3\) or to \(K_5^-\),
which is the graph obtained from \(K_5\) by the removal of one edge.
The proof consists of showing that a minimum counter-example for this statement
is either an odd graph or a subgraph of a double centered \(3\)-tree in which its centers have odd degree.
This implies contradictions to Theorem~\ref{thm:lovasz} and to Theorem~\ref{thm:pyber}, respectively.

Let $G$ be a graph, let $e \in E(G)$ be a cut-edge, and let $C_1$ and $C_2$ be the components of \(G-e\).
We say that $e$ is \emph{useful} if \(C_1\) and \(C_2\) have at least two vertices,
otherwise we say that \(e\) is \emph{useless}.
Now, we are able to prove our main result of this section.

\begin{theorem}\label{theorem:treewidth-most-three}
  Let $G$ be a connected partial $3$-tree with $n$ vertices.
  Then $\pn(G) \leq \floor{n / 2}$ or $G$ is isomorphic to $K_3$ or to $K_5^-$.
\end{theorem}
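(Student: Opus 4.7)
The plan is to proceed by contradiction: let $G$ be a counter-example to the statement with a minimum number of vertices, so $G$ is not isomorphic to $K_3$ or $K_5^-$ and $\pn(G) > \lfloor n/2 \rfloor$. I will show that $G$ must be either an odd graph (every vertex of odd degree) or a subgraph of a double centered $3$-tree $T\vee K_2$ in which both centers $a,b$ have odd degree. In the first case, Theorem~\ref{thm:lovasz} gives $\pn(G)=\lfloor n/2\rfloor$, a contradiction. In the second case, since $T$ is a tree (Proposition~\ref{prop:double-centered-3-tree}), every cycle of $G$ uses at least one of $a,b$, so every cycle contains a vertex of odd degree, and Theorem~\ref{thm:pyber} again yields $\pn(G)\leq\lfloor n/2\rfloor$, again a contradiction.

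The engine for the structural analysis is the following reduction scheme. Using Fact~\ref{lem:k-tree-spanning}, fix an underlying $3$-tree $G^*$ of $G$ that contains all its terminals, and by Fact~\ref{fact:at-least-two-terminals} pick a terminal $v$ of $G^*$. Since $v$ is a terminal, $N_{G^*}(v)$ induces a triangle in $G^*$ on three vertices, call them $x,y,z$, so $d_G(v)\le 3$ and the possible edges of $G$ inside $\{x,y,z\}$ are restricted. I then perform a case analysis on $d_G(v)$ and on which edges of the triangle $xyz$ are present in $G$. In each case I attempt to construct an $r$-reducing subgraph $H$ (typically a short reducing path through $v$, possibly extended by using a second terminal $v'$ guaranteed by Fact~\ref{fact:terminal-vertices}, or a small subgraph obtained by a lifting at $v$). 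Once $H$ is found, Fact~\ref{fact:remove-terminal} ensures that each component $C'$ of $G-E(H)$ is again a partial $3$-tree; by minimality of $G$, each $C'$ is either a Gallai graph or isomorphic to $K_3$ or $K_5^-$. Lemma~\ref{lemma:reducing+triangles+K5-} lets me absorb the $K_3$ and $K_5^-$ components into $H$, after which Lemma~\ref{lemma:reducing} contradicts the choice of $G$. When a direct reducing path fails because the would-be residue contains a non-Gallai component, Lemma~\ref{lemma:simple-lifting} and Lemma~\ref{lemma:double-lifting} (supported by Corollary~\ref{cor:K5--subdivision} on subdivisions of $K_5^-$) are invoked to reroute through $v$ and obtain the needed reducing subgraph.

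The case analysis is driven by forcing. Whenever some edge $e$ among the neighbors of a terminal $v$ is absent in $G$, I can reduce: either a short path through $v$ isolates $v$ together with another terminal, or a lifting at $v$ creates a component covered by the reducing lemmas. Thus, to avoid contradiction, many edges of $G^*$ must actually lie in $G$; pushing this through all terminals forces a very rigid local structure. Similarly, whenever a terminal has a neighbor of even degree that is not $a$ or $b$, a reducing argument is again available. Aggregating these constraints across all terminals forces either (a) every vertex of $G$ has odd degree, or (b) all non-terminal structure collapses to a tree $T$ hanging off two fixed vertices $a,b$, and $d_G(a),d_G(b)$ are odd, which is exactly the double centered configuration of Proposition~\ref{prop:double-centered-3-tree}.

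The main obstacle will be the bookkeeping of this case analysis. Partial $3$-trees admit a large variety of local configurations around a terminal (degree $1$, $2$, or $3$, with triangles and useful or useless cut-edges; see the distinction between useful and useless cut-edges introduced just before the theorem statement). For each configuration I must exhibit a concrete reducing subgraph or else derive a structural constraint, and ensure that the liftings used remain valid (i.e., the hypotheses of Lemmas~\ref{lemma:simple-lifting} and~\ref{lemma:double-lifting} on the degrees $|\{a,b\}\cap N(v)|, |\{x,y\}\cap N(u)|\le 1$ hold). The cleanest way to organize this is to establish a sequence of claims progressively narrowing the structure of a minimum counter-example: first that $\delta(G)\ge 2$, then that $G$ contains no useful cut-edge incident to a terminal, then that every terminal has exactly two neighbors of high degree, and finally that these neighbors coincide across all terminals, yielding the pair $(a,b)$ of centers. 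Verifying that both $a$ and $b$ end up with odd degree — the last step before invoking Pyber — is the most delicate point, and will follow from a parity argument on the edges incident to $a,b$ across all terminal neighborhoods.
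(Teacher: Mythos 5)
Your high-level strategy coincides exactly with the paper's: a minimum counterexample, the machinery of reducing subgraphs and liftings, and a final dichotomy between an odd graph (handled by Lov\'asz's theorem) and a subgraph of a double centered $3$-tree whose centers have odd degree (handled by Pyber's theorem). However, what you have written is a roadmap rather than a proof: the entire case analysis --- which in the paper occupies five preliminary claims (no reducing subgraph, no useful cut-edge, degree-$2$ vertices have adjacent neighbours, $2$-edge-cuts induce $4$-cycles, every terminal has degree exactly $3$) followed by Properties~\ref{step:1}--\ref{step:3} --- is exactly where the difficulty lies, and several of the ``forcing'' steps you assert do not go through as stated.

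Concretely: (1) the claim that whenever some edge among the neighbours of a terminal is absent you can reduce is too optimistic; in the paper the absence of an edge in $N(u)$ only yields a reducing subgraph after one has first established that $G-u-v$ is disconnected, that neither $u$ nor $v$ is a cut-vertex, and that any $2$-edge-cut induces a $4$-cycle, and the reduction is then obtained by a carefully chosen path together with one or two liftings whose validity hinges on $|N(u)\cap N(v)|\le 1$. (2) Your description of the endgame presupposes the centers $a,b$ from the start (``whenever a terminal has a neighbour of even degree that is not $a$ or $b$\dots''), which is circular: the paper must first prove that any two terminals share exactly two neighbours (Properties~\ref{step:1} and~\ref{step:2}) and only then split on whether a single pair is common to all terminals; when it is not, the graph is forced to be $K_{4,4}$ minus a perfect matching, which is how the odd-graph branch actually arises. (3) The oddness of $d(a)$ and $d(b)$ is not obtained by ``a parity argument on the edges incident to $a,b$ across all terminal neighbourhoods''; Property~\ref{step:3} proves it by contradiction, building an explicit reducing subgraph from a component of $G-u-v$ (possibly with an added edge $ac$) together with a path ending at $a$, a path that exists precisely because $a$ would have odd degree in that component. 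Until these steps are carried out, the proof is incomplete.
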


\begin{proof}
	Let \(G\) and \(n\) be as in the statement.
	Suppose, for a contradiction, that the statement does not hold,
	and let \(G\) be a counter-example for the statement that minimizes \(n\).
	It is not hard to check that \(n \geq 6\).

	\begin{claim}\label{claim:no-reducing}
		\(G\) contains no reducing subgraph.
	\end{claim}

	\begin{proof}
		Suppose, for a contradiction, that \(G\) contains an \(r\)-reducing subgraph \(H\).
		Since \(G\) is connected and \(n \geq 6\), we have \(\mathcal{C}^\emptyset_3 = \mathcal{C}^\emptyset_5 = \emptyset\).
		Thus, by Lemma~\ref{lemma:reducing+triangles+K5-},
		\(H'= H+\bigcup_{C\in\mathcal{C}_3^H\cup\mathcal{C}_5^H}C\)
		is an \(\big(r+|\mathcal{C}_3^H|+2|\mathcal{C}_5^H|\big)\)-reducing subgraph of \(G\).
		Moreover, no component of \(G-E(H')\) is isomorphic to \(K_3\) or to \(K_5^-\),
		thus \(H'\) is a Gallai graph.
		By Lemma~\ref{lemma:reducing}, \(G\) is a Gallai graph.
	\end{proof}

	\begin{claim}\label{cl:twm3:not-contain-useful-bridge}
	$G$ contains no useful cut-edge.
	\end{claim}

	\begin{proof}
	Suppose, for a contradiction, that $e = v_1v_2$ is a useful cut-edge in $G$.
	For \(i=1,2\), let $G_i$ be the component of $G - e$
	that contains $v_i$, and let $G'_i = G_i + e$.
	Note that~\(G'_i\) is a subgraph of \(G\),
	hence \(G'_i\) is a partial \(k\)-tree.
        Let \(n_i = |V(G'_i)|\).
	Since $e$ is a useful cut-edge, $n_i < n$,
	and since $G'_i$ has a vertex with degree one, \(G'_i\) is not isomorphic to $K_3$ or to $K_5^-$.
	Thus, by the minimality of $G$, the graph $G'_i$ is a Gallai graph, for \(i=1,2\).
	Let $\D_i$ be a path decomposition of $G'_i$ such that $|\D_i| \leq \floor{n_i / 2}$,
	and let \(P_i\) be the path in~\(\D_i\) that contains the edge \(e\).
	Put \(P = P_1 + P_2\), and \(\D = \D_1-P_1 + \D_2-P_2 + P\).
	Note that the vertices~\(v_1\) and~\(v_2\) are the only vertices of~\(G\) in both~\(G'_1\) and~\(G'_2\),
	hence \(n_1+n_2 =n+2\).
	Therefore, we have
	\(|\D| = |\D_1| + |\D_2| - 1 \leq \lfloor n_1/2\rfloor + \lfloor n_2/2\rfloor -1 \leq \lfloor(n_1+n_2-2)/2\rfloor = \lfloor n/2\rfloor\).
  	\end{proof}

  	\begin{claim}\label{claim:liftable-vertices-d2}
  		If \(v\) is a vertex of degree \(2\) in \(G\) and \(N(v) =\{x,y\}\),
  		then \(xy\in E(G)\).
  	\end{claim}

  	\begin{proof}
                Suppose for a contradiction that \(xy \notin E(G)\), and note that \(G' = G + \widehat{xvy}\) is a partial \(3\)-tree.
                Since \(n \geq 6\) either \(G'\) is a Gallai graph or \(G' \simeq K_5^-\).
		If~\(G'\) is a Gallai graph, then \(\pn(G)\leq\pn(G')\leq \lfloor (n-1)/2\rfloor\).
		If~\(G'\simeq K_5^-\),
		then~\(G\) is a subdivision of \(K_5^-\) and, by Corollary~\ref{cor:K5--subdivision},
		\(\pn(G) = 2 =\lfloor (n-1)/2\rfloor\).
  		In each case we obtained a contradiction to the minimality of~\(G\).
  	\end{proof}

	\begin{claim}\label{cl:twm3:2-edge-cut}
		If \(\{ab,cd\}\subset E(G)\) is an edge-cut,
		then \(\{a,b,c,d\}\) induces a cycle in \(G\).
	\end{claim}

	\begin{proof}
	Let \(G' = G - ab-cd\).
	If \(G'\) contains more than two components, then both~\(ab\) and~\(cd\)
	are cut-edges, hence \(\{ab,cd\}\) is not an edge-cut (a minimal edge separator).
	Thus, let \(G_1',G_2'\) be the two components of \(G'\).
	Suppose, without loss of generality, that \(a,c\in V(G_1')\) and \(b,d\in V(G_2')\).
	Thus, we have \(ab, ad, cb, cd\notin E(G)\).
	If~\(\{a,b,c,d\}\) does not induce a cycle in \(G\),
	then we can suppose that \(b\neq d\) and~\(bd\notin E(G)\).
	Let  \(G_2 = G_2'+bd\).
	By the minimality of \(G\), \(G_2\) is a Gallai graph
	or is isomorphic to~\(K_3\) or to~\(K_5^-\).

	If \(G_2\simeq K_3\), then let \(x\) be the vertex of \(G_2'\) different from \(b\) and \(d\).
        Note that in \(G\), the vertex \(x\) has degree two and has non-adjacent neighbors,
	a contradiction to Claim~\ref{claim:liftable-vertices-d2}.
	Thus, \(G_2\) is a Gallai graph or \(G_2\simeq K_5^-\).
	Let \(G_1 = G_1' + ab + cd\), and let \(P\) be a path in \(G_1\) joining~\(b\) to~\(d\).
	Let \(H = G_2' + P\), and note that \(H\) is a proper subdivision of \(G_2\) (see Figure~\ref{fig:2-edge-cut}).
	Let \(r = \lfloor |V(G_2')|/2\rfloor\).
	Note that \(G-E(H)\) has at least \(|V(G_2')|\geq 2r\) isolated vertices.
	If \(G_2\) is a Gallai graph, then \(\pn(H)\leq\pn(G_2)\leq\lfloor |V(G_2')|/2\rfloor = r\).
	If \(G_2\simeq K_5^-\), then by Proposition~\ref{prop:K5--simple-subdivision},
	we have \(\pn(H)\leq 2 = \lfloor |V(G_2')|/2\rfloor = r\).
	Thus, \(H\) is an \(r\)-reducing subgraph,
	a contradiction to Claim~\ref{claim:no-reducing}.
        \begin{figure}[ht]
          \centering
          \scalebox{.7}{\input{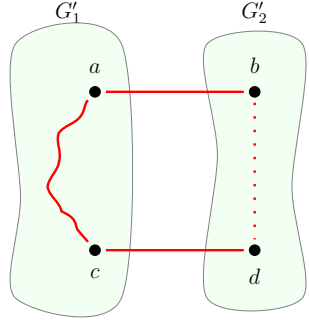}}%
          \caption{the two components \(G_1'\) and \(G_2'\) of \(G'\) are illustrated by the regions,
          					while the path \(P\) is colored red.}
	\label{fig:2-edge-cut}
        \end{figure}
	\end{proof}

        Let \(G^*\) be an underlying \(3\)-tree of \(G\).
        Since \(G\subseteq G^*\) and \(n\geq 6\), \(G^*\) has at least \(6\) vertices.
	By Facts~\ref{fact:terminal-vertices} and~\ref{fact:at-least-two-terminals},
	$G^*$ has at least two non-adjacent terminals, say~\(u\) and~\(v\).
	Recall that \(d_{G^*}(u) = d_{G^*}(v) = 3\).
	Thus, \(d_G(u),d_G(v)\leq 3\).
	The next claim shows that these terminals must have degree precisely \(3\).

  	\begin{claim}\label{claim:every-terminal-degree-3}
  		Every terminal of \(G\) has degree \(3\).
  	\end{claim}

  	\begin{proof}
  		Let \(u\) and \(v\) be two (non-adjacent) terminals of \(G\),
  		where \(d(u)\geq d(v)\).
  		Suppose for a contradiction that \(d(v) \leq 2\).
  		If \(d(u) = d(v) = 1\), then any path joining \(u\) and \(v\) is a reducing path,
  		a contradiction to Claim~\ref{claim:no-reducing}.
Suppose that \(d(u) = 2\), and let \(N(u) = \{a, b\}\).
  		Suppose also that \(u\) and \(v\) have at most one neighbor in common,
  		and let \(P'\) be a shortest path joining \(u\) and \(v\).
                Suppose, without loss o generality, that \(a\) is the neighbor of \(u\) in~\(P'\),
  		and let \(P\) be the graph obtained from \(P'\) by the addition of the edges incident to~\(u\) or~\(v\)
  		that are not in \(P'\) (see Figure~\ref{fig:n1}).
                Since \(P'\) is a shortest path and \(u\) and \(v\) have at most one neighbor in common,
                \(P\) is a path, and
  		since \(u\) and \(v\) are isolated in \(G-E(P)\),~\(P\) is a reducing path,
  		a contradiction to Claim~\ref{claim:no-reducing}.
  		Thus, we may suppose that \(u\) and \(v\) have precisely two neighbors, say \(a\) and \(b\), in common.
		Put \(G' = G -u-v\).
		By Claim~\ref{claim:liftable-vertices-d2}, we have \(ab\in E(G)\),
		hence \(G'\) is connected and \(|V(G')| \geq 4\).
		Thus, \(G'\not\simeq K_3\).
		By Fact~\ref{fact:remove-terminal},~\(G'\) is a partial \(3\)-tree.
		If \(G'\simeq K_5^-\),
		then \(G'' = G'-\lift{aub} =G'-ab+au+ub\) is a proper subdivision of \(K_5^-\),
		and by Corollary~\ref{cor:K5--subdivision} we have \(\pn(G'')=2\).
		Let \(T\) be the triangle with the edges \(va,vb,ab\),
		and note that \(G = G''+T\).
		By Corollary~\ref{cor:triangles+path}, \(\pn(G)\leq 3=\lfloor 7/2\rfloor = \lfloor n/2\rfloor\).
		Thus, we may suppose that \(G'\) is a Gallai graph and, therefore, \(\pn(G') \leq \lfloor (n-2)/2\rfloor\).
		By Corollary~\ref{cor:triangles+path}, \(\pn(G)\leq\pn(G') + 1 \leq\lfloor n/2\rfloor\).

		\begin{figure}[ht]
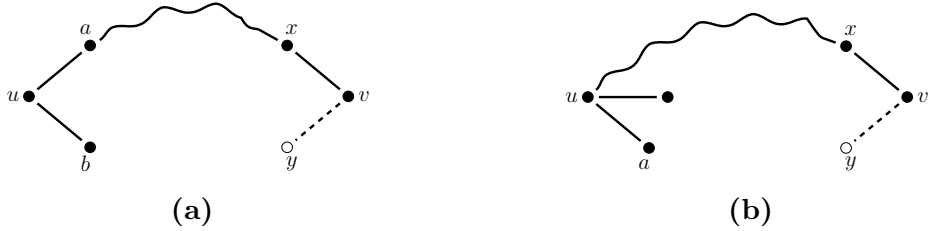

			\centering
			\begin{subfigure}[b]{.45\linewidth}
				\centering\scalebox{.7}{\input{Figures/n1.tikz}}%
				\caption{}\label{fig:n1}
			\end{subfigure}
			\begin{subfigure}[b]{.45\linewidth}
				\centering\scalebox{.7}{\input{Figures/n1-2.tikz}}%
				\caption{}\label{fig:n1-2}
			\end{subfigure}
			\caption{the path \(P\) joining \(u\) and \(v\),
				that can be extended to a path by adding the possible remaining edge \(vb\).}
		\end{figure}

		Thus, we can suppose that \(d(u) = 3\), and let \(N(u) = \{a, b, c\}\).
		First, we claim that that \(G-u\) is connected.
  		Indeed if \(G-u\) is not connected, then \(u\) is incident to a cut-edge, say~\(ua\).
  		By Claim~\ref{cl:twm3:not-contain-useful-bridge}, \(ua\) is a useless cut-edge, hence \(d(a) = 1\).
  		Since \(u\) and \(v\) are not adjacent, we have \(v \neq a\).
 		Again, if~\(P'\) is a minimum path joining \(a\) to \(v\),
 		and \(P\) is the path obtained from \(P'\) by adding the possible remaining edge
 		incident to \(v\) (see Figure~\ref{fig:n1-2}),
 		then \(a\) and \(v\) are isolated in \(G-E(P)\),
  		hence \(P\) is a reducing path, a contradiction to Claim~\ref{claim:no-reducing}.
  		Now, we claim that \(N(u)\) induces a clique in \(G\).
  		Indeed, if \(N(u)\) does not induces a clique in \(G\),
  		then there exists a pair of non-adjacent vertices in $N(u)$, say~$b$ and~$c$.
		Let~\(P'\) be a minimum path in \(G - u\) joining~\(a\) and~\(v\).
                Since~\(P'\) is a minimum path, it contains only one neighbor of \(v\), say \(x\).
  		If \(d(v) = 2\), then let \(y\) be the neighbor of \(v\) different from \(x\), and put \(P = P'+vy + au\).
  		If \(d(v) = 1\) put \(P = P'+au\) (see Figure~\ref{fig:n4}).
  		Let~\(G' = (G -E(P)) +\lift{buc}\).
  		By Fact~\ref{fact:remove-terminal}, \(G'\) is a partial \(3\)-tree,
  		and by the minimality of~\(G\), every component of \(G'\) is a Gallai graph, or is isomorphic to \(K_3\) or to \(K_5^-\).
  		Moreover, \(u\) and \(v\) are isolated in \(G'\).
  		Therefore, by Lemma~\ref{lemma:simple-lifting},
  		\(G\) contains a reducing subgraph, a contradiction to Claim~\ref{claim:no-reducing}.
                Therefore, we may assume that \(N(u)\) induces a clique in \(G\).

		\begin{figure}[ht]
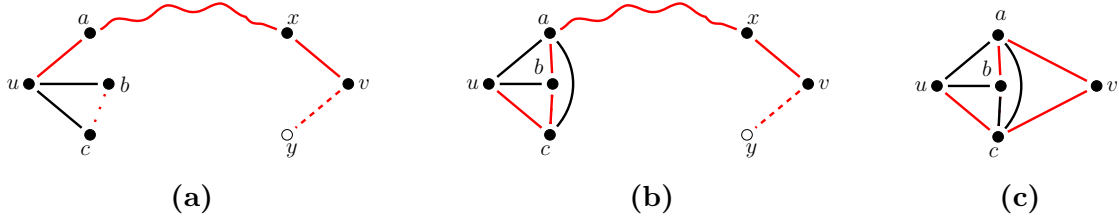

		\centering
			\begin{subfigure}[b]{.35\linewidth}
			\centering\scalebox{.7}{\input{Figures/n4.tikz}}%
			\caption{}\label{fig:n4}
			\end{subfigure}%
      \hspace{ 0.3cm}
			\begin{subfigure}[b]{.35\linewidth}
			\centering\scalebox{.7}{\input{Figures/n5.tikz}}%
			\caption{}\label{fig:n5}
			\end{subfigure}%
      \hspace{ 0.3cm}
			\begin{subfigure}[b]{.2\linewidth}
			\centering\scalebox{.7}{\input{Figures/n6.tikz}}%
			\caption{}\label{fig:n6}
			\end{subfigure}%
		\caption{Figures (a)-(c) show the path \(P\), in red,
				which its removal allows us to apply a lifting and then Lemma~\ref{lemma:simple-lifting},
				to obtain a reducing subgraph of \(G\).}
		\end{figure}

  		Let $P'$ be a minimum path from \(N(u)\) to $v$ in $G - u$.
  		Suppose, without loss of generality, that \(P'\) is a path from \(a\) to \(v\).
                Since \(P'\) is minimum, it does not contain \(b\) and \(c\).
  		Let \(x\) be the neighbor of \(v\) in~\(P'\).
		Suppose that $|N(u) \cap N(v)| \leq 1$.
  		If \(d(v) = 2\), then let \(y\) be the neighbor of \(v\) different from~\(x\), and put \(P = P' + ab + bc + cu + vy\).
  		If \(d(v) = 1\), then put \(P = P' + ab + bc + cu\) (see Figure~\ref{fig:n5}).
		Put \(G' = (G - E(P)) + \lift{aub}\).
  		Analogously to the case above,
  		\(G\) contains a reducing subgraph, a contradiction to Claim~\ref{claim:no-reducing}.
  		Thus, we may suppose that $|N(u) \cap N(v)| =2$.
  		Suppose that \(a,c\in N(v)\).
  		Let \(P = ucvab\) (see Figure~\ref{fig:n6}) and let \(G' = (G - E(P)) + \lift{aub}\).
  		Analogously to the cases above,
		\(G\) contains a reducing subgraph, a contradiction to Claim~\ref{claim:no-reducing}.
  	\end{proof}

In what follows, we present three properties of \(G\).
Let \(u\) and \(v\) be two non-adjacent terminal vertices.
Property~\ref{step:1} states that \(u\) and \(v\) have at least two neighbors in common,
and Property~\ref{step:2} states that \(u\) and \(v\) do not have (all) three neighbors in common.
Finally, Property~\ref{step:3} states that every common neighbor of two terminal has odd degree,
which enables us to characterize the minimum counter-example,
and show that it satisfies the statement, obtaining a contradiction.

\begin{property}\label{step:1}
There are no two terminals with at most one neighbor in common.
\end{property}

\begin{proof}
Suppose, for a contradiction, that there are two terminals, say \(u\) and \(v\),
such that \(|N(u)\cap N(v)| \leq 1\),
and let $G_u = G - u$, $G_v = G - v$ and $G_{uv} = G - u - v$.

	\begin{claim}\label{claim:twm3:Guv_disconnected}
	The graph $G_{uv}$  is disconnected.
	\end{claim}

	\begin{proof}
	Suppose for a contradiction that $G_{uv}$ is connected.
	First, suppose that $N(u)$ and $N(v)$ are not cliques in $G$.
	Thus, there are vertices $a,b\in N(u)$ and $x,y\in N(v)$ such that $ab,xy \notin E(G)$.
	Let $c$ and $z$ be the remaining vertices in $N(u)$ and $N(v)$, respectively.
	Since $G_{uv}$ is connected, there is a path $P'$ from $c$ to $z$ in $G_{uv}$.
	Let \(P = P' + uc + vz\) (see Figure~\ref{fig:g4}),
	and put $G' = (G - E(P)) + \lift{aub} + \lift{xvy}$.
	It is clear that \(u\) and \(v\) are isolated in \(G'\).
	Moreover, we have \(|\{a,b\}\cap N(v)|,|\{x,y\}\cap N(u)|\leq 1\), because \(|N(u)\cap N(v)| \leq 1\).
	Therefore, by Lemma~\ref{lemma:double-lifting},
	\(G\) contains a reducing subgraph,
	a contradiction to Claim~\ref{claim:no-reducing}.

        \begin{figure}[ht]
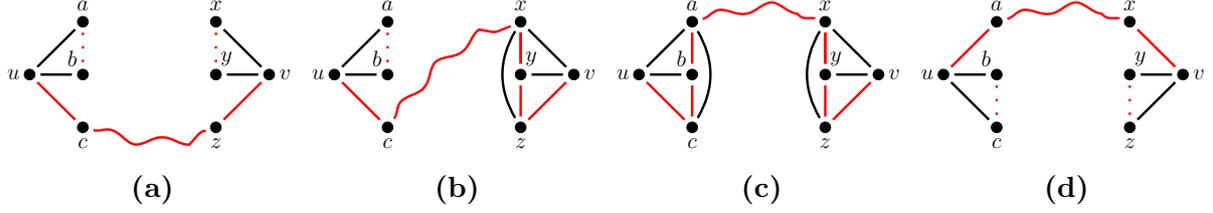

          \centering
          \hspace{ 0cm}
          \begin{subfigure}[b]{.25\linewidth}
            \centering\scalebox{.7}{\input{Figures/g4.tikz}}%
            \caption{}\label{fig:g4}
          \end{subfigure}%
          \begin{subfigure}[b]{.25\linewidth}
            \centering\scalebox{.7}{\input{Figures/g5.tikz}}%
            \caption{}\label{fig:g5}
          \end{subfigure}%
          \begin{subfigure}[b]{.25\linewidth}
            \centering\scalebox{.7}{\input{Figures/g6.tikz}}%
            \caption{}\label{fig:g6}
          \end{subfigure}%
          \begin{subfigure}[b]{.25\linewidth}
            \centering\scalebox{.7}{\input{Figures/g7.tikz}}%
            \caption{}\label{fig:g7}
          \end{subfigure}%
		\caption{Figures (a)-(d) show the path \(P\), in red,
				which its removal allows us to apply a lifting and then Lemma~\ref{lemma:double-lifting},
				to obtain a reducing subgraph of \(G\).}
        \end{figure}

	Thus, we can assume with without loss of generality that $N(v)$ is a clique.
	Now, suppose that $N(u)$ is not a clique in $G$,
        and let $a$ and $b$ be two nonadjacent vertices in~$N(u)$.
	Let $c$ be the remaining vertex in $N(u)$,
        and let $P'$ be a minimum path in~$G_{uv}$ from $c$ to a vertex, say $x$, in $N(v)$.
	Let $y$ and $z$ be the remaining vertices of $N(v)$,
	and let \(P = uc + P' + xy + yz + zv\) (see Figure~\ref{fig:g5}).
	Clearly, $P$ is a path in $G$.
	Now let $G' = (G - E(P)) +\lift{aub}+\lift{xvy}$.
	Analogously to the case above,
	\(G\) contains a reducing subgraph,
	a contradiction to Claim~\ref{claim:no-reducing}.

        Finally, we can assume that \(N(u)\) is a clique,
        and let $P'$ be a minimum path from a vertex in  $N(u)$ to a vertex in $N(v)$.
        We may suppose, without loss of generality, that \(P'\) joins \(a\) and \(x\).
	Put \(P = uc + cb + ba + P' + xy + yz + zv\) (see Figure~\ref{fig:g6})
	and \(G' = (G - E(P)) +\lift{aub}+\lift{xvy}\).
	Analogously to the cases above,
	\(G\) contains a reducing subgraph,
	a contradiction to Claim~\ref{claim:no-reducing}.
  \end{proof}

	From now on, we fix a minimum path $P'$ in $G$ joining a vertex in $N(u)$ to a vertex in $N(v)$.
        We may suppose, without loss of generality, that \(P'\) joins \(a\) and \(x\),
	and put \(P = ua + P' + xv\).
	Let $b$ and $c$ be the remaining vertices of $N(u)$, and let $y$ and $z$ be the remaining vertices of $N(v)$.
	We claim that at least one edge between~\(bc\) and~\(yz\) belong to~\(E(G)\).
	Indeed, suppose that the edges $bc,yz\notin E(G)$ (see Figure~\ref{fig:g7}).
	Thus, let $G' = (G - E(P)) + \lift{buc}+\lift{yvz}$.
	Since \(|N(u)\cap N(v)| \leq 1\), we have \(|\{a,b\}\cap N(v)|,|\{x,y\}\cap N(u)|\leq 1\).
	Analogously to the proof of Claim~\ref{claim:twm3:Guv_disconnected},
	\(G\) contains a reducing subgraph, a contradiction to Claim~\ref{claim:no-reducing}.
	Therefore, $G$ contains at least one of the edges $bc$ or $yz$.
	Assume, without loss of generality, that $G$ contains the edge $yz$.

	\begin{claim}\label{claim:twm3:v-not-cut-vertex}
	$u$ and \(v\) are not cut-vertices.
	\end{claim}

	\begin{proof}
	Suppose for a contradiction that $v$ is a cut-vertex.
	Since $d(v) = 3$, at least one of the edges incident to $v$ must be a cut-edge.
	Since \(yz\in E(G)\), the edges \(vy,vz\) are not cut-edges,
        hence $xv$ is a cut-edge.
	Since \(P' + au\) joins \(x\) to \(u\), the component of \(G-v\) that contains \(x\) is not trivial.
	Therefore, $xv$ is a useful cut-edge, a contradiction to Claim~\ref{cl:twm3:not-contain-useful-bridge}.
	
        To prove that \(u\) is not a cut vertex,
        we first show that \(b\) and \(c\) have degree at least~\(2\).
	If \(d(b) =1\), then \(ab, bc \notin E(G)\).
	Since \(v\) is not a cut-vertex,
	there is a minimum path \(Q'\) in \(G-v\) joining a vertex in $\{y,z\}$ to a vertex in \(\{a, c\}\).
	We may assume, without loss of generality, that \(Q'\) is a path joining \(c\) and \(y\).
	By the minimality of \(Q'\) we have \(u \notin V(Q')\).
        Suppose for a contradiction that \(xy \notin E(G)\).
	Let \(Q = Q' + uc + yz + zv\) (see Figure~\ref{fig:m8}),
        and let \(G' = (G - E(Q)) +\lift{aub} +\lift{xvy}\).
        Since \(|N(u) \cap N(v)| \leq 1|\), we have \(|\{a, b\} \cap N(v)| \leq 1\) and \(|\{x, y\} \cap N(u)| \leq 1\).
        By Lemma~\ref{lemma:double-lifting}, \(G\) contains a reducing subgraph, a contradiction to Claim~\ref{claim:no-reducing}.
        Thus, we can suppose that \(xy \in E(G)\).
        In this case, let \(Q = P' + ua + xy + yz + zv\) (see Figure~\ref{fig:m9}),
	and let \(G' = (G - E(Q)) +\lift{buc} +\lift{xvy}\).
        Again, by Lemma~\ref{lemma:double-lifting}, \(G\) contains a reducing subgraph, a contradiction to Claim~\ref{claim:no-reducing}.
	By symmetry we have \(d(c) > 1\).

        \begin{figure}[ht]
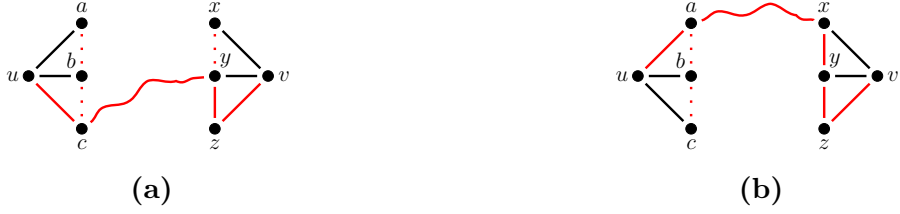

          \centering
          \begin{subfigure}[b]{.5\linewidth}
            \centering\scalebox{.7}{\input{Figures/m8.tikz}}%
            \caption{}\label{fig:m8}
          \end{subfigure}%
          \begin{subfigure}[b]{.5\linewidth}
            \centering\scalebox{.7}{\input{Figures/m9.tikz}}%
            \caption{}\label{fig:m9}
          \end{subfigure}%
		\caption{Figures (a) and (b) show the path \(P\), in red,
				which its removal allows us to apply a lifting and then Lemma~\ref{lemma:double-lifting},
				to obtain a reducing subgraph of \(G\).}
        \end{figure}

	Now, suppose that \(u\) is a cut-vertex.
	Since the vertex \(d(u) = 3\), at least one of the edges incident to $u$ is a cut-edge.
	Since \(a\) is adjacent to \(u\) and is an end-vertex of \(P'\), we have \(d(a) > 1\).
	Thus, since \(d(b) \geq 2\) and \(d(c) \geq 2\), if~\(e\) is a cut edge incident to~\(u\), then~\(e\) is useful,
	a contradiction to Claim~\ref{cl:twm3:not-contain-useful-bridge}.
	\end{proof}

Since \(P'\) is a path joining \(a\) and \(x\) in \(G_{uv}\),
the vertices \(a\) and \(x\) are in the same component, say \(C_1\), of \(G_{uv}\).
Since \(u\) is not a cut-vertex of \(G\), there is a component, say~\(C_2\neq C_1\), of \(G_{uv}\)
containing \(y\) and \(z\).
Note that \(C_1\) and \(C_2\) are the only components of~\(G_{uv}\),
since \(G_u\) is connected, \(G_{uv}\) disconnected, and \(y\) and \(z\) are adjacent in \(G_{uv}\).
Since~\(v\) is not a cut-vertex of \(G\), \(u\) must have a neighbor in \(C_2\).
Suppose, without loss of generality, that \(c \in V(C_2)\).
Now, suppose that \(b \in V(C_2)\), hence \(\{ua,vx\}\) is an edge-cut of \(G\) (see Figure~\ref{fig:m10}).
Since \(uv \notin E(G)\), \(\{u,a,v,x\}\) does not induce a cycle in \(G\),
a contradiction to Claim~\ref{cl:twm3:2-edge-cut}.
Thus, we can suppose that \(b\in V(C_1)\).
Hence, \(\{uc,vx\}\) is an edge-cut (see Figure~\ref{fig:m11}).
By hypothesis, the only possible neighbor in common between \(u\) and \(v\) is \(a\).
Thus, \(c\notin N(v)\), which means that \(cv\notin E(G)\).
Therefore, \(\{u,c,x,v\}\) does not induce a cycle in \(G\),
a contradiction to Claim~\ref{cl:twm3:2-edge-cut}.
\begin{figure}[ht]
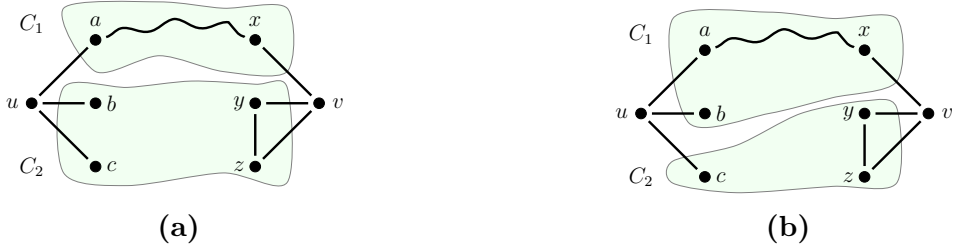

  \centering
  \begin{subfigure}[b]{.5\linewidth}
    \centering\scalebox{.7}{\input{Figures/m10.tikz}}%
    \caption{}\label{fig:m10}
  \end{subfigure}%
  \begin{subfigure}[b]{.5\linewidth}
    \centering\scalebox{.7}{\input{Figures/m11.tikz}}%
    \caption{}\label{fig:m11}
  \end{subfigure}%
  \caption{the possible components of \(G_{uv}\).}
\end{figure}
\end{proof}

\begin{property}\label{step:2}
There are no two terminals with (all) three neighbors in common.
\end{property}

\begin{proof}
	Suppose for a contradiction that \(u\) and \(v\) are two non-adjacent terminals
	such that \(N(u) = N(v) = \{a, b, c\}\).

        \begin{claim}
          \(\{a, b, c\}\) induces a clique.
        \end{claim}

	\begin{proof}
		Suppose for a contradiction that \(\{a,b,c\}\) does not induce a clique.
		We can suppose, without loss of generality, that \(ab\notin E(G)\).
		We claim that \(d(a)\) is odd.
		Indeed, suppose \(d(a)\) is even, and let \(G' = G - u - v + ab\).
		By Fact~\ref{fact:remove-terminal}, \(G'\) is a partial 3-tree.
		It is clear that \(d_{G'}(a)\) is odd.
		Thus, the component \(C'\) of \(G'\) that contains \(ab\) is not isomorphic to \(K_3\).
		Thus, by the minimality of \(G\), \(C'\) is a Gallai graph or \(C'\simeq K_5^-\).
		Now, put \(C = C' - \lift{avb}\), and note that \(\pn(C)\leq\lfloor|V(C')|/2\rfloor\)
		(if \(C'\simeq K_5^-\), then we use Proposition~\ref{prop:K5--simple-subdivision}).
		Let \(\D\) be a minimum path decomposition of \(C\).
		Since \(d_{G'}(a)=d_{C'}(a)=d_C(a)\) is odd, at least one path \(P\in\D\) has \(a\) as an end-vertex.
		Note that~\(Q = P + au\) and~\(R = bucv\) are paths of~\(G\) (see Figure~\ref{fig:m12}).
		Therefore, we have \(\pn(C+au) = \pn(C)\).
		Let \(H = C + au + R\), we have \(\pn(H) \leq \pn(C+au) + 1\leq \lfloor |V(C')|/2\rfloor + 1\).
		Finally, note that \(u,v\) and every vertex in \(V(C')\) is isolated in \(G-E(H)\),
		hence \(H\) is a reducing subgraph of \(G\), a contradiction to Claim~\ref{claim:no-reducing}.
                Therefore, \(d(a)\) is odd.
                By symmetry, \(d(b)\) is odd.

                \begin{figure}[ht]
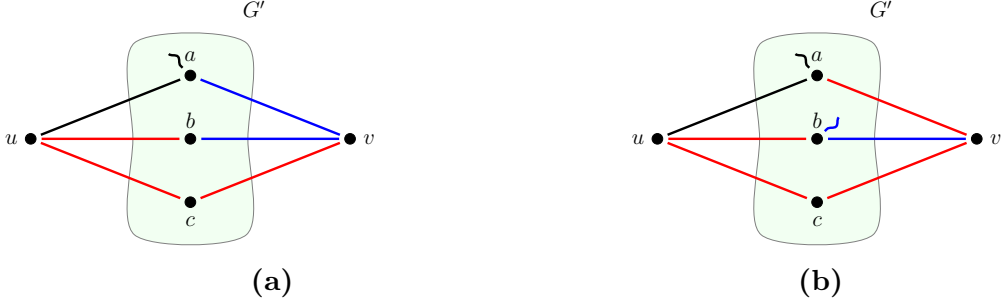

                  \centering
                \begin{subfigure}[b]{.45\linewidth}
                  \scalebox{.7}{\input{Figures/m12.tikz}}%
                  \caption{} \label{fig:m12}
		\end{subfigure}%
                  \begin{subfigure}[b]{.45\linewidth}
                    \centering
                    \scalebox{.7}{\input{Figures/m13-a-new.tikz}}%
                  \caption{} \label{fig:m13-a}
                  \end{subfigure}%

                  \label{fig:m12-old}
                  \caption{the paths \(Q\) and \(R\) are illustrated, respectively, in black and red.}
                \end{figure}

		Now, let \(G' = G - u -v\).
		It is not hard to check that if \(G'\) is disconnected,
		then for some \(x\in\{a,b,c\}\) the set \(\{ux,xv\}\)
		is a \(2\)-edge-cut, but since \(uv\notin E(G)\),
		\(\{u,x,v\}\) does not induce a cycle in \(G\).
                Since \(G'\) is a subgraph of \(G\), \(G'\) is a partial 3-tree,
                and by the minimality of \(G\),~\(G'\) is a Gallai graph or isomorphic to \(K_3\) or to \(K_5^-\).
		Since \(a\) and \(b\) have odd degrees in~\(G'\), \(G'\) is not isomorphic to \(K_3\).
		First, suppose that \(G'\) is a Gallai graph, and let \(\D'\) be a minimum path decomposition of \(G'\).
		Let \(P_a',P_b'\in\D'\) be paths containing~\(a\) and~\(b\) as end-vertices, respectively.
		Put \(P_a = P_a' + av\), \(P_b = P_b' + bu\), and \(R = bvcua\)
		(if \(P_a' = P_b'\), we put \(P_a=P_b=P_a'+av+bu\)) (see Figure~\ref{fig:m13-a}).
		Clearly, \(\D=\D'-P_a'-P_b'+P_a +P_b + R\) is a path decomposition of \(G\)
		such that \(|\D| \leq \lfloor |V(G')|/2\rfloor + 1 \lfloor n/2\rfloor\).

                Therefore, we may assume that \(G'\simeq K_5^-\).
		and let \(V(g') = \{a,b,c,x,y\}\).
		Let \(R_1'=axby, R_2' = acy, R_3 = ayxcb\),
		and note that \(\{R_1',R_2',R_3'\}\) is a path decomposition of \(G'\).
		Note that \(R'_i\) does not contain \(u\) or \(v\), for \(i=1,2,3\),
		\(R'_1\) does not contain \(c\),
		and \(R'_2\) does not contain \(b\).
		Put \(R_1 = R_1'+ au + uc + cv\) and \(R_2 = R_2' + av + vb + bu\).
		Note that \(\{R_1,R_2,R_3\}\) is a decomposition of \(G\) (see Figure~\ref{fig:m14-a-new}).
		Therefore, \(\pn(G) = 3 =\lfloor 7/2\rfloor = \lfloor n/2\rfloor\).
                This finishes the proof of the claim.
                \begin{figure}[ht]
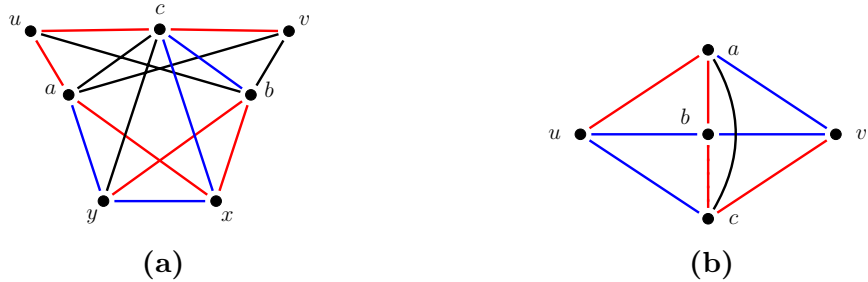

                  \centering
                  \begin{subfigure}[b]{.45\linewidth}
                    \centering\scalebox{.7}{\input{Figures/m14-a-new.tikz}}%
                    \caption{}\label{fig:m14-a-new}
                  \end{subfigure}%
                  \begin{subfigure}[b]{.45\linewidth}
		  \centering\scalebox{.7}{\input{Figures/y1.tikz}}%
		  \caption{}\label{fig:y1}
                  \end{subfigure}%
                  \caption{in Figure (a), the paths \(R_1\), \(R_2\), and \(R_3\) are illustrated, respectively,
                  		in red, black, and blue;
                  		in Figure (b), the path \(P\) and the lifted edges are illustrated, respectively,
                  		in red and blue.}
                  		\label{fig:m14}
                \end{figure}
	\end{proof}

	Now, let \(P = uabcv\) (see Figure~\ref{fig:y1}) and \(G' = (G - E(P)) +\lift{buc} + \lift{avb}\).
	By Fact~\ref{fact:remove-terminal},~\(G'\) is a partial \(3\)-tree.
	Moreover, \(G'\) is connected and, by minimality of \(G\), \(G\) is a Gallai graph or isomorphic to \(K_3\) or to \(K_5^-\).
        Since \(n \geq 6\), we have \(|V(G')| \geq 4\) and therefore \(G \not\simeq K_3\).
	If \(G'\simeq K_5^-\), then \(n = 7\) and \(G' -\lift{buc} -\lift{avb} = G-E(P)\) is a proper subdivision of \(K_5^-\).
	By Proposition~\ref{prop:K5--simple-subdivision}, \(\pn(G-E(P)) = 2\).
	Hence, \(\pn(G) = 3 = \floor{n/2}\).
	If~\(G'\) is a Gallai graph, then \(\pn(G')\leq \floor{|V(G')|/2}\) and \(\pn(G) \leq \floor{|V(G')|/2} +1 = \floor{n/2}\).
	This concludes the proof of Property~\ref{step:2}.
\end{proof}

\begin{property}\label{step:3}
If \(a\) is a common neighbor of two terminals, then \(d(a)\) is odd.
\end{property}

\begin{proof}
\noindent
From the Properties \(1\) and \(2\), if \(u,v\) are two terminals of \(G\), we have \(N(u)\cap N(v) =\{a,b\}\).
Here, we prove that \(d(a),d(b)\) are odd.

Let \(u\) and \(v\) be two terminals of \(G\),
let \(N(u) = \{a,b,c\}, N(v) = \{a,b,d\}\),
and \(O = ua+ub+uc+va+vb+vd\).
Suppose for a contradiction that \(d(a)\) is even.
Now, suppose that \(ac\notin E(G)\).
Let \(G' = G - u - v + ac\).
It is clear that \(d_{G'}(a) = d_G(a) - 1\), hence \(d_{G'}(a)\) is odd.
Let \(C'\) be the component of \(G'\) containing the edge \(ac\).
Note that \(C'\not\simeq K_3\) because \(d_{C'}(a) = d_{G'}(a)\) is odd.
Now, put \(C = C' - \lift{auc}\), and note that \(\pn(C)\leq\lfloor|V(C')|/2\rfloor\),
(if \(C'\simeq K_5^-\), then we use Proposition~\ref{prop:K5--simple-subdivision}).
Note that \(d_C(a) = d_{C'}(a)\),
thus, if \(\D_C\) is a minimum path decomposition of \(C\),
there is a path \(P_a\) in \(\D_C\) that has \(a\) as an end-vertex.
Note also that \(C\) does not contain \(v\).
Therefore, \(P = P_a + av\) is a path.
Put \(Q = ubvd\) (see Figure~\ref{fig:y2}) and note that \(\D_C -P_a + P + Q\) is a path decomposition of \(H = C'-ac + O\).
Moreover, the vertices \(u\) and \(v\), and the vertices in \(V(C')\) are isolated in \(G-E(H)\),
and \(\pn(H) \leq \pn(C) + 1 \leq\lfloor|V(C')|/2\rfloor + 1\).
Therefore, \(H\) is a \(\big(\lfloor|V(C')|/2\rfloor + 1\big)\)-reducing subgraph of \(G\),
a contradiction to Claim~\ref{claim:no-reducing}.
Therefore, we can suppose that \(ac\in E(G)\).
By symmetry, we have \(ad\in E(G)\).

\begin{figure}[h]
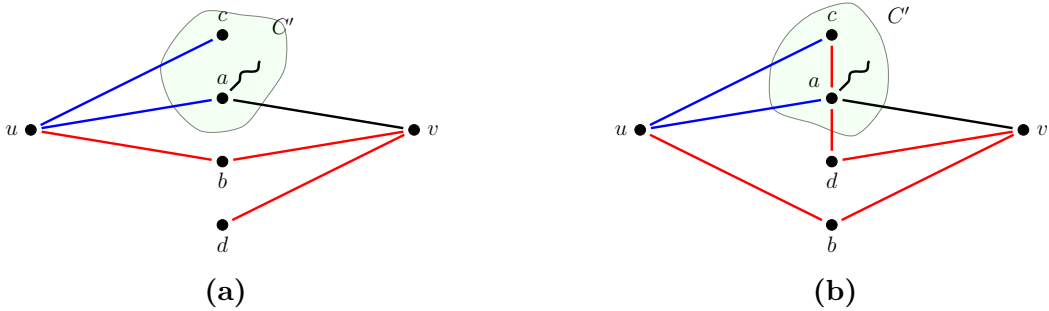

  \centering
  \begin{subfigure}[b]{.5\linewidth}
    \centering\scalebox{.7}{\input{Figures/y2.tikz}}%
    \caption{}\label{fig:y2}
  \end{subfigure}%
  \begin{subfigure}[b]{.5\linewidth}
    \centering\scalebox{.7}{\input{Figures/y3.tikz}}%
    \caption{}\label{fig:y3}
  \end{subfigure}%
  \caption{the paths \(P\) and \(Q\) are illustrated, respectively, in black and red.}
\end{figure}

Now, let \(O = ua+ub+uc+va+vb+vd+ac+ad\), and let \(G' = G - u - v - ad\).
It is clear that \(d_{G'}(a) = d_G(a) - 3\), hence \(d_{G'}(a)\) is odd.
Let \(C'\) be the component of \(G'\) containing the edge \(ac\).
In what follows, the proof is similar to the case above.
We have \(C'\not\simeq K_3\) because \(d_{C'}(a) = d_{G'}(a)\) is odd.
Let \(C = C' -\lift{auc}\) and note that \(d_C(a)\) is also odd,
and \(\pn(C)\leq\lfloor|V(C')|/2\rfloor\).
Let \(\D_C\) be a minimum path decomposition of \(C\),
and let \(P_a\in\D_C\) be a path that has \(a\) as an end-vertex.
Again, \(P_a\) does not contain \(v\) because \(v\notin V(C)\),
hence \(P = P_a + av\) is a path.
Put \(Q = ubvdac\) (see Figure~\ref{fig:y3}) and note that \(\D_C -P_a + P + Q\)
is a path decomposition of \(H = (C'-ac) + O\).
Analogously to the case above, \(H\) is a \(\big(\lfloor|V(C')|/2\rfloor + 1\big)\)-reducing subgraph of \(G\),
a contradiction to Claim~\ref{claim:no-reducing}.
Therefore, we conclude that \(d(a)\) is odd.
\end{proof}

\noindent
Now, we can conclude our proof.
Let \(u_1,\ldots,u_k\) be the terminals of \(G\).
Since \(n \geq 6\), we have~\(k \geq 2\).
By Claim~\ref{claim:every-terminal-degree-3}, Property~\ref{step:1}, and Property~\ref{step:2},
we have \(|N(u_i) \cap N(u_j)| = 2\) for every~\(i \neq j\).
In what follows, the proof is divided into two cases,
depending on whether there are two vertices, say \(a,b\), such that \(a,b\in N(u_i)\) for every \(i=1,\ldots,k\).
First, suppose that such pair of vertices does not exist.
Let \(N(u_1) = \{x_1,x_2,x_3\}\).
Since \(|N(u_1)\cap N(u_2)| = 2\), we may suppose, without loss of generality, that \(N(u_2) = \{x_1,x_2,x_4\}\),
where \(x_4\neq x_3\).
Since there are no two vertices contained in \(N(u_i)\), for every \(i=1,\ldots,k\),
there is a terminal, say \(u_3\), such that \(x_1\) or \(x_2\) is not contained in \(N(u_3)\).
Suppose, without loss of generality, that \(x_2\notin N(u_3)\).
Thus, \(N(u_3) = \{x_1,x_3,x_4\}\),
because \(|N(u_3)\cap N(u_1)|=|N(u_3)\cap N(u_2)|=2\).
 Now, suppose that there is a vertex \(x_5\) different from \(x_1,\ldots,x_4\)
 contained in the neighbor of a terminal vertex, say \(u\).
 It is not hard to check that \(|N(u)\cap N(u_i)| < 2\) for at least one \(i\in\{1,2,3\}\),
 a contradiction to Property~\ref{step:1}.

 Now we use an underlying \(3\)-tree \(G^*\) of \(G\).
 Since \(u_1,\ldots,u_k\) are terminals, the set~\(N(u_i)\) induces a complete graph in \(G^*\),
 i.e., \(x_ix_j\in E(G^*)\) for every \(i,j\in\{1,2,3,4\}\), with \(i\neq j\).
 By Fact~\ref{fact:remove-terminal}, \(G^+= G^* - u_1-\cdots - u_k\) is a \(3\)-tree.
 If \(G^+\) has a vertex different from \(x_1,x_2,x_3,x_4\),
 then \(G^+\) contains at least \(5\) vertices, and by Facts~\ref{fact:terminal-vertices} and~\ref{fact:at-least-two-terminals},
 the graph \(G^+\) contains two non-adjacent terminals.
 Thus, \(G^+\) contains a terminal \(u^+\) which is not in \(\{x_1,x_2,x_3,x_4\}\),
 but then \(u^+\) is a terminal of \(G^*\),
 and, consequently, \(u^+\) is a terminal of~\(G\) different from \(u_1,\ldots, u_k\), a contradiction.
 Therefore,~\(G\) consists only of the vertices \(u_1,\ldots,u_k,x_1,x_2,x_3,x_4\).
 By Claim~\ref{claim:every-terminal-degree-3}, \(u_1,\ldots,u_k\) have degree \(3\).
 Note that \(x_1,x_2\in N(u_1)\cap N(u_2)\),
 \(x_3\in N(u_1)\cap N(u_3)\), and \(x_4\in N(u_2)\cap N(u_3)\).
 Thus,~\(x_j\) is a common neighbor of at least two terminals of \(G\), for \(j=1,\ldots,4\).
 By Property~\ref{step:3}, \(x_1,x_2,x_3,x_4\) have odd degree.
 Therefore,~\(G\) is an odd Graph, and by Theorem~\ref{thm:lovasz}, we have \(\pn(G) = n/2\).
 We note that, by using Property~\ref{step:2}, one can prove that \(G\) is a graph obtained from \(K_{4,4}\) by removing a perfect matching.

Now, suppose that there are two vertices, say \(a,b\), such that \(a,b\in N(u_i)\), for every \(i=1,\ldots,k\).
Note that this is also true for the underlying spanning \(3\)-tree \(G^*\) of \(G\),
i.e.,~\(G^*\) is a double centered \(3\)-tree with center in \(a,b\).
Thus, by Proposition~\ref{prop:double-centered-3-tree},
there is a tree \(T\) such that \(G^* =  T\vee K_2\), where \(V(G^*) = V(T) +a+b\).
Now, if \(C\) is a cycle in \(G^*\), then \(C\) must contain either \(a\) or \(b\),
otherwise \(V(C)\subseteq V(T)\), but \(T\) has no cycles.
It is clear that if \(C\) is cycle in \(G\), then \(C\) is a cycle in \(G^*\),
hence \(C\) contains \(a\) or \(b\).
Thus, every cycle of~\(G\) must contain \(a\) or \(b\),
which have odd degree by Property~\ref{step:3}.
Therefore, by Theorem~\ref{thm:pyber}, we have \(\pn(G) = \lfloor n/2\rfloor\).
This concludes the proof.
\end{proof}

The next corollary comes from the fact that every graph with no subdivision of \(K_4\) is a partial \(3\)-tree.
In fact, it is not hard to check that graphs with no subdivision of \(K_4\) do not contain
any of the forbidden minors for partial \(3\)-trees.

\begin{corollary}\label{cor:K4-free}
	Let \(G\) be a connected graph with \(n\) vertices and with no subdivision of~\(K_4\).
	Then \(\pn(G)\leq\lfloor n/2 \rfloor\)
	or \(G\) is isomorphic to \(K_3\).
\end{corollary}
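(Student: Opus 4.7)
The strategy is to reduce to Theorem~\ref{theorem:treewidth-most-three} by establishing that $G$ is a partial $3$-tree and then ruling out $K_5^-$ as a possible exceptional outcome.

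First I would argue that $G$ is a partial $3$-tree. The key tool is the classical equivalence: a graph contains $K_4$ as a minor if and only if it contains a subdivision of $K_4$. This holds because $K_4$ has maximum degree~$3$, so every branch vertex of a $K_4$-minor model can be taken to be a single vertex (its degree is already at least $3$), and the connecting subgraphs can be contracted to internally disjoint paths. Consequently, the hypothesis that $G$ contains no subdivision of $K_4$ is equivalent to $G$ being $K_4$-minor-free. By Dirac's theorem this means $G$ has treewidth at most~$2$ (i.e., $G$ is series-parallel) and, in particular, $G$ is a partial $3$-tree. The route suggested by the remark preceding the corollary is equivalent: the forbidden minors for treewidth at most~$3$ (namely $K_5$, the octahedron $K_{2,2,2}$, the pentagonal prism, and the Wagner graph) each contain $K_4$ as a minor, hence, by the equivalence above, as a subdivision. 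Thus a graph without a $K_4$-subdivision contains none of these as a minor and is a partial $3$-tree.

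Next, I would apply Theorem~\ref{theorem:treewidth-most-three}, which yields that $\pn(G)\leq\lfloor n/2\rfloor$, or that $G$ is isomorphic to $K_3$ or to $K_5^-$. To rule out the last case, it suffices to observe that $K_5^-$ contains $K_4$ as a subgraph: removing any one edge from $K_5$ leaves a copy of $K_4$ on the remaining three vertices together with either endpoint of the deleted edge. Hence $K_5^-$ contains a (trivial) subdivision of $K_4$, contradicting our hypothesis on $G$. This leaves only the alternatives $\pn(G)\leq\lfloor n/2\rfloor$ and $G\simeq K_3$, completing the proof.

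There is no substantial obstacle here; the whole argument is a short reduction. The only delicate point is invoking the minor-versus-subdivision equivalence correctly. Should one wish to avoid it, the alternative is to check directly, for each of the four forbidden minors of treewidth-$3$ graphs, that it contains a subdivision of $K_4$ (immediate for $K_5$, and a routine case check for $K_{2,2,2}$, the pentagonal prism, and the Wagner graph), which then gives the partial $3$-tree conclusion in the same way.
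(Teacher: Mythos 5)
Your proof is correct and takes essentially the same approach as the paper, which likewise deduces that a graph with no subdivision of \(K_4\) contains none of the forbidden minors for partial \(3\)-trees, applies Theorem~\ref{theorem:treewidth-most-three}, and excludes \(K_5^-\) because it contains \(K_4\) as a subgraph. Your additional observation that such graphs are in fact series-parallel is a sharper statement than needed but does no harm.
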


\subsection{Haj\'os' Conjecture for graphs with treewidth at most~3}\label{sec:hajos}

Before verifying Conjecture~\ref{conj:hajos} for graphs with treewidth at most \(3\),
we give an analogous definition of reducing subgraph for dealing with Conjecture~\ref{conj:hajos}.
Let \(G\) be an Eulerian graph, and let \(H\) be an Eulerian subgraph of \(G\).
Given a positive integer \(r\), we say that~\(H\) is an \emph{\(r\)-cycle reducing subgraph} of \(G\)
if \(G-E(H)\) has at least \(2r\) isolated vertices and \(\cn(H)\leq r\).
If \(H\) is an \(1\)-cycle reducing subgraph, we say that \(H\) is a \emph{reducing cycle} of~\(G\),
and we say that \(H\) is a \emph{cycle reducing subgraph} if \(H\)
is an \(r\)-cycle reducing subgraph for some positive integer \(r\).
We say a graph \(G\) with \(n\) non-isolated vertices
is a \emph{Haj\'os graph} if  \(\cn(G) \leq \lfloor (n-1)/2\rfloor\).
The following lemma holds, and its proof is analogous to the proof of Lemma~\ref{lemma:reducing}.

\begin{lemma}\label{lemma:cycle-reducing}
	Let \(G\) be an Eulerian graph and \(H\subset G\) be a cycle reducing subgraph of \(G\) such that \(H\neq G\).
	If \(G-E(H)\) is a Haj\'os graph, then \(G\) is a Haj\'os graph.
\end{lemma}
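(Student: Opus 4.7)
The plan is to mirror the proof of Lemma~\ref{lemma:reducing}, replacing paths by cycles and the Gallai-graph bound \(\lfloor n/2 \rfloor\) by the Haj\'os-graph bound \(\lfloor (n-1)/2 \rfloor\), adjusting the arithmetic accordingly. First I would let \(n\) denote the number of non-isolated vertices of \(G\) and fix \(r\) such that \(H\) is an \(r\)-cycle reducing subgraph, so that \(G-E(H)\) has at least \(2r\) isolated vertices and \(\cn(H)\leq r\). The hypothesis \(H \neq G\) guarantees that \(G-E(H)\) still contains at least one non-isolated vertex, so if \(n'\) denotes the number of non-isolated vertices of \(G-E(H)\), we have \(1 \leq n' \leq n-2r\).

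Since \(G-E(H)\) is a Haj\'os graph, this gives
\[
\cn(G-E(H)) \;\leq\; \lfloor (n'-1)/2 \rfloor \;\leq\; \lfloor (n-2r-1)/2 \rfloor \;=\; \lfloor (n-1)/2 \rfloor - r.
\]
Taking a minimum cycle decomposition \(\D'\) of \(G-E(H)\), whose size is bounded above by \(\lfloor (n-1)/2 \rfloor - r\), together with a minimum cycle decomposition \(\D_H\) of \(H\) with \(|\D_H|\leq r\), the union \(\D'\cup \D_H\) is a cycle decomposition of \(G\) of size at most \(\lfloor (n-1)/2 \rfloor\), so \(G\) is a Haj\'os graph.

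The verification that this construction is well-defined is essentially automatic, but the point one must keep track of is the assumption \(H\neq G\): without it, \(G-E(H)\) could be the edgeless graph on \(n\) vertices, which is not a Haj\'os graph under the stated definition, and the bound above would degenerate. Note also that since both \(G\) and \(H\) have even degree at every vertex, so does \(G-E(H)\); hence each of its components admits an edge-decomposition into cycles by Veblen's theorem, justifying that \(\cn(G-E(H))\) is well-defined and that the union \(\D' \cup \D_H\) is indeed a valid cycle decomposition of the whole graph \(G\).
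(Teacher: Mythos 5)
Your proof is correct and follows exactly the route the paper intends: the paper only remarks that the proof is "analogous to the proof of Lemma~\ref{lemma:reducing}," and your argument carries out precisely that analogy, with the bound \(\lfloor (n-2r-1)/2\rfloor = \lfloor (n-1)/2\rfloor - r\) in place of \(\lfloor (n-2r)/2\rfloor = \lfloor n/2\rfloor - r\). The added remarks on the role of \(H\neq G\) and on Veblen's theorem are sound but not needed beyond what the hypotheses already supply.
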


Note that the statement of Lemma~\ref{lemma:cycle-reducing} does not require \(G\) to be connected.
In fact, it is easy to check that if a graph \(G\) is a vertex-disjoint union of two Haj\'os graphs, then \(G\) is also a Haj\'os graph.

\begin{theorem}\label{thm:hajos-treewidth3}

	Let \(G\) be a connected Eulerian partial \(3\)-trees with \(n\) vertices.
	Then \(\cn(G)\leq\lfloor (n-1)/2\rfloor\).
\end{theorem}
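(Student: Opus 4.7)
The strategy is to induct on $n$ and mimic the proof of Theorem~\ref{theorem:treewidth-most-three}, using \emph{cycle} reducing subgraphs in place of reducing subgraphs. The base cases $n \leq 4$ are immediate: the only nontrivial connected Eulerian partial $3$-trees on at most four vertices are $K_3$ and $C_4$, both of which are Hajós graphs. For the inductive step, let $G$ be a minimum counterexample on $n \geq 5$ vertices. First, $G$ has no cut-edge, since removing a single edge of an Eulerian graph leaves a component with exactly one vertex of odd degree, contradicting the handshaking lemma in that component. Next, an analogue of Claim~\ref{claim:no-reducing} shows that $G$ contains no cycle reducing subgraph; this uses Lemma~\ref{lemma:cycle-reducing} together with a ``$K_3$-absorption'' lemma playing the role of Lemma~\ref{lemma:reducing+triangles+K5-}, and no analogue for $K_5^-$ is needed since $K_5^-$ is not Eulerian.

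Since every terminal of a partial $3$-tree has degree at most $3$ and $G$ is Eulerian, every terminal of $G$ has degree exactly $2$. Let $u$ be a terminal with $N(u) = \{a, b\}$. If $ab \notin E(G)$, then I would suppress $u$ to obtain $G' = G - u + ab$; by Fact~\ref{fact:remove-terminal}, $G'$ is a connected Eulerian partial $3$-tree on $n-1$ vertices, and any cycle decomposition of $G'$ pulls back to one of $G$ by replacing the edge $ab$ with the path $aub$. Induction gives $\cn(G) = \cn(G') \leq \lfloor (n-2)/2 \rfloor \leq \lfloor (n-1)/2 \rfloor$, contradicting our assumption on $G$. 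Hence every terminal $u$ of $G$ satisfies $N(u) = \{a, b\}$ with $ab \in E(G)$, so $u$ lies on a triangle $T_u = uab$.

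By Facts~\ref{fact:terminal-vertices} and~\ref{fact:at-least-two-terminals}, $G$ has two non-adjacent terminals $u$ and $v$. The analysis splits according to $|N(u) \cap N(v)|$. If $N(u) = N(v) = \{a, b\}$, then the $4$-cycle $H = uavb$ is a $1$-cycle reducing subgraph: removing $E(H)$ isolates $u$ and $v$ and preserves the Eulerian property of the remainder. Each component of $G - E(H)$ is a smaller connected Eulerian partial $3$-tree, hence is Hajós by induction, and Lemma~\ref{lemma:cycle-reducing} delivers the contradiction. If $|N(u) \cap N(v)| \in \{0, 1\}$, I would search for a cycle through both $T_u$ and $T_v$ that uses all four edges incident to $\{u, v\}$, constructed via a path in $G - \{u, v\}$ linking the endpoints of $T_u$ to those of $T_v$; such a cycle is again a $1$-cycle reducing subgraph whenever it exists, and the $2$-edge-connectivity of $G$ together with an analogue of Claim~\ref{cl:twm3:2-edge-cut} should suffice to produce it in most configurations.

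The main obstacle will be the subcase in which no such connecting path can be found. Here I plan to exploit the tight interplay between the parity constraints and the partial $3$-tree structure: a suitable Eulerian lifting at $u$ (and possibly at $v$), in the spirit of Lemmas~\ref{lemma:simple-lifting} and~\ref{lemma:double-lifting} but tailored to producing Eulerian subgraphs rather than paths, should yield a smaller connected Eulerian partial $3$-tree to which the induction hypothesis applies, once Fact~\ref{fact:remove-terminal} confirms that the lifted graph remains a partial $3$-tree. In fact, the Eulerian hypothesis should simplify matters compared to the Gallai setting, because lifting at a degree-$2$ vertex automatically preserves all degree parities and avoids the odd-graph/Pyber analysis (Properties~\ref{step:1}--\ref{step:3}) that culminates the proof of Theorem~\ref{theorem:treewidth-most-three}.
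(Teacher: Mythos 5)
Your overall strategy (induction plus cycle reducing subgraphs, exploiting that terminals of an Eulerian partial $3$-tree have degree $2$) is headed in the right direction, but there is a genuine gap exactly where you flag ``the main obstacle''. You rule out cut-edges, which gives $2$-edge-connectivity, but you never rule out cut-\emph{vertices}, and $2$-edge-connectivity is not enough to guarantee a cycle through two prescribed terminals: in a connected Eulerian graph with a cut-vertex (e.g.\ two triangles sharing a vertex, which is an Eulerian partial $3$-tree), two terminals lying in different blocks are on no common cycle, and no ``connecting path in $G-\{u,v\}$'' between $T_u$ and $T_v$ exists. Your proposed remedy for this subcase --- Eulerian liftings in the spirit of Lemmas~\ref{lemma:simple-lifting} and~\ref{lemma:double-lifting} --- is left entirely as a plan, and it is unclear it can be carried out: a lifting at a degree-$2$ terminal is just a suppression, which you have already spent in forcing $ab\in E(G)$, so there is nothing left to lift at $u$ or $v$.

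The paper closes this gap much more simply. It first reduces to the $2$-connected case by a block decomposition: if $B_1,\dots,B_k$ are the blocks of $G$, then $\sum_{i=1}^k\lfloor(|V(B_i)|-1)/2\rfloor\le\big\lfloor\big(\sum_{i=1}^k|V(B_i)|-k\big)/2\big\rfloor=\lfloor(n-1)/2\rfloor$, so a minimal counter-example is $2$-connected. Then \emph{any} cycle through the two non-adjacent degree-$2$ terminals (which exists by $2$-connectivity) isolates both of them and is a reducing cycle, and Lemma~\ref{lemma:cycle-reducing} finishes; no case analysis on $|N(u)\cap N(v)|$, no suppression step, and no $K_3$-absorption analogue of Lemma~\ref{lemma:reducing+triangles+K5-} is needed, since $K_3$ \emph{is} a Haj\'os graph and components of $G-E(C)$ isomorphic to $K_3$ cause no trouble, unlike in the Gallai setting. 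If you prepend the block-decomposition reduction, your argument collapses to the paper's and the cases you enumerate become superfluous; without it, the proof is incomplete.
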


\begin{proof}

	Suppose, for a contradiction, that the statement does not hold,
	and let \(G\) be a counter-example that minimizes \(n\).
	The statement holds trivially if~\(n=3\) or~\(n=4\).
        Thus, we may assume that \(n \geq 5\).
        We claim that \(G\) is \(2\)-connected.
        Indeed, suppose that \(G\) contains a cut-vertex \(v\),
        and let \(B_1,\ldots,B_k\) be the \(2\)-connected components of \(G\).
        It is easy to see that \(|V(G)| = \big(\sum_{i=1}^k|V(B_i)|\big) -k +1\),
        i.e., \(|V(G)| -1= \big(\sum_{i=1}^k|V(B_i)|\big) -k\).
        Since \(G\) is minimal, \(\cn(B_i)\leq\lfloor(|V(B_i)|-1)/2\rfloor\), for \(i=1,\ldots,k\).
        Let \(\D_i\) be a minimum cycle decomposition of \(B_i\).
        Thus, \(\D=\cup_{i=1}^k \D_i\) is a cycle decomposition of \(G\)
        of size at most
        \[\sum_{i=1}^k \left\lfloor \frac{|V(B_i)|-1}{2}\right\rfloor
        \leq \left\lfloor\sum_{i=1}^k  \frac{|V(B_i)|-1}{2}\right\rfloor
        =\left\lfloor\frac{1}{2}\left(\left(\sum_{i=1}^k  |V(B_i)|\right)-k\right)\right\rfloor
        =\left\lfloor\frac{|V(G)|-1}{2}\right\rfloor.\]
        Therefore, we may assume that \(G\) is \(2\)-connected.
	Since \(n\geq 5\), \(G\) contains at least two terminals, say~\(u\) and~\(v\), and
	since \(G\) is Eulerian, \(d(u) = d(v) = 2\).
	Since \(G\) is \(2\)-connected, there is a cycle \(C\) in \(G\) containing \(u\) and \(v\).
	The vertices \(u\) and \(v\) are isolated in \(G' = G-E(C)\), hence either \(G=C\) or \(C\) is a reducing cycle.
	Since \(G\) is a minimal counter example, \(G'\) is a Haj\'os graph,
	hence by Lemma~\ref{lemma:cycle-reducing}, \(G\) is a Haj\'os graph.
\end{proof}

\begin{corollary}\label{cor:K4-free-hajos}

	Let \(G\) be a connected Eulerian graph with \(n\) vertices and with no subdivision of~\(K_4\).
	Then \(\cn(G)\leq\lfloor n-1/2 \rfloor\).
\end{corollary}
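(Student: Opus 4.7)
The plan is to derive this corollary as an immediate consequence of Theorem~\ref{thm:hajos-treewidth3}, exactly mirroring how Corollary~\ref{cor:K4-free} was obtained from Theorem~\ref{theorem:treewidth-most-three}. The central observation is the structural fact already invoked in the preceding Gallai-type corollary: every graph with no subdivision of \(K_4\) is a partial \(3\)-tree. This is because the set of forbidden minors for the class of partial \(3\)-trees (namely \(K_5\), the octahedron \(K_{2,2,2}\), the pentagonal prism, and the Wagner graph \(V_8\)) each contains a \(K_4\)-subdivision, so any graph avoiding \(K_4\)-subdivisions in particular avoids all these minors.

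Given this, my proof of the corollary will be essentially one line. First I would let \(G\) be as in the statement. Since \(G\) has no subdivision of \(K_4\), \(G\) is a partial \(3\)-tree. Since \(G\) is connected and Eulerian, Theorem~\ref{thm:hajos-treewidth3} applies directly and yields \(\cn(G)\leq \lfloor (n-1)/2\rfloor\), which is exactly the desired bound. Note that unlike Corollary~\ref{cor:K4-free}, there is no exceptional case such as \(K_3\) to carve out, because the bound \(\lfloor(n-1)/2\rfloor\) for Hajós is the uniform one proved in Theorem~\ref{thm:hajos-treewidth3} with no exceptions.

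There is essentially no obstacle here: the entire content is packaged into Theorem~\ref{thm:hajos-treewidth3}, and the step from ``no \(K_4\)-subdivision'' to ``partial \(3\)-tree'' is a standard fact from the minor-characterization of partial \(3\)-trees due to Arnborg, Proskurowski, and Corneil. If one wanted to be self-contained, one could also argue directly: a graph with no \(K_4\)-subdivision has treewidth at most \(2\), and in particular treewidth at most \(3\), which suffices to invoke Theorem~\ref{thm:hajos-treewidth3}.
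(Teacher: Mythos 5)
Your proposal is correct and matches the paper's intended argument exactly: the paper derives this corollary, like Corollary~\ref{cor:K4-free}, from the observation that a graph with no subdivision of \(K_4\) avoids all forbidden minors of partial \(3\)-trees and is therefore a partial \(3\)-tree, after which Theorem~\ref{thm:hajos-treewidth3} applies verbatim. Your additional remarks (no exceptional cases to exclude, and the sharper fact that such graphs even have treewidth at most \(2\)) are accurate.
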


\section{Graphs with maximum degree 4}\label{sec:max4}

In this section, we verify Conjectures~\ref{conj:gallai} and~\ref{conj:hajos}
for graphs with maximum degree at most \(4\).
Although this case was already verified~\cite{BonamyPerrett16+,GranvilleMoisiadis87} for both conjectures,
the proofs presented here exemplifies the application of reducing subgraphs.

\subsection{Gallai's Conjecture for graphs with maximum degree~$4$}\label{sec:max4-gallai}

\begin{theorem}\label{theorem:max4}
	Let \(G\) be a connected graph with \(n\) vertices.
	If \(G\) has maximum degree \(4\),
	then \(\pn(G)\leq \lfloor n/2\rfloor\)
	or \(G\) is isomorphic to \(K_3\), \(K_5\) or to \(K_5^-\).
\end{theorem}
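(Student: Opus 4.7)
The plan is to adapt the strategy of Theorem~\ref{theorem:treewidth-most-three}. Suppose the statement fails, and let $G$ be a counter-example on $n$ vertices with $n$ minimum; small values of $n$ are dealt with by direct inspection, so I will assume $n\geq 6$.

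First I would establish, by the same arguments used in the first two claims of the proof of Theorem~\ref{theorem:treewidth-most-three}, that $G$ contains no reducing subgraph and no useful cut-edge. The first step uses Lemma~\ref{lemma:reducing+triangles+K5-} to absorb every component of $G-E(H)$ that is isomorphic to $K_3$, $K_5$, or $K_5^-$, and then Lemma~\ref{lemma:reducing} to finish; the second is the same two-piece gluing argument, which depends only on connectivity and hence transfers unchanged. In the same spirit, I would also show that the two neighbors of every degree-$2$ vertex of $G$ are adjacent, since otherwise suppressing the vertex produces a graph that is either a smaller Gallai graph (contradicting minimality) or a copy of $K_5^-$, for which Corollary~\ref{cor:K5--subdivision} finishes the job.

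By Theorem~\ref{thm:lovasz}, $G$ must contain at least two vertices of even degree, and under the hypothesis $\Delta(G)\leq 4$ such vertices have degree $2$ or $4$. I would then split into three cases. (a) Two vertices $u,v$ of degree $2$ exist: take a shortest path $P'$ joining them and extend it by the at most one remaining edge incident to each of $u,v$; barring a degenerate overlap the result is a reducing path, and any overlap is resolved by Lemma~\ref{lemma:simple-lifting} or Lemma~\ref{lemma:double-lifting} applied at $u$ and $v$. (b) There is a vertex $v$ of degree $4$ possessing two non-adjacent neighbors $x,y$: lift $\lift{xvy}$, route a short path to another even-degree vertex, and invoke Lemma~\ref{lemma:simple-lifting} to produce a reducing subgraph. (c) Otherwise, the closed neighborhood of every degree-$4$ vertex is a clique, so $\{v\}\cup N(v)$ induces a $K_5$ in $G$; since $\Delta(G)\leq 4$ each vertex of this $K_5$ already has all its edges inside it, hence this $K_5$ is a component of $G$ and, by connectivity, $G\simeq K_5$, contradicting the choice of $G$.

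The main obstacle I expect is Case~(a) when the two degree-$2$ vertices share both neighbors, or when a short cycle prevents the extension from being a simple path: there the reducing-path construction fails outright and one must fall back on a double lifting via Lemma~\ref{lemma:double-lifting}, whose hypothesis $|\{a,b\}\cap N(v)|,|\{x,y\}\cap N(u)|\leq 1$ has to be verified by a delicate case analysis on the overlap pattern of $N(u)$ and $N(v)$. Managing these configurations, while simultaneously keeping track of the isolated-vertex count and of components of $G-E(H)$ isomorphic to $K_3$, $K_5$, or $K_5^-$ (the ones that Corollaries~\ref{cor:triangles+path} and~\ref{cor:extras-K5-}, through Lemma~\ref{lemma:reducing+triangles+K5-}, later absorb into the reducing subgraph), will be the most technical part of the argument.
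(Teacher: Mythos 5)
Your overall framework (minimal counterexample, no reducing subgraph, no useful cut-edge, degree-two vertices with adjacent neighbours) matches the paper's opening, but from there the two arguments diverge completely, and your route has genuine gaps. The paper does not case on even-degree vertices at all: it invokes Corollary~\ref{cor:K4-free} to assume \(G\) contains a subdivision of \(K_4\), takes such a subdivision \(H\) with fewest edges, and observes that the degree bound forces each branch vertex \(x_i\) (of degree \(3\) in \(H\)) to carry at most one edge outside \(H\). The graph \(H+S\), where \(S\) is this set of at most four extra edges, therefore isolates all four branch vertices simultaneously, and a short case analysis on where the endpoints \(z_i\) of \(S\) lie shows that \(H+S\) (or a slight enlargement) decomposes into \(2\) or \(3\) paths, giving a reducing subgraph directly; no liftings are used anywhere in that proof.

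Your cases (a) and (b) have concrete problems. First, Lemmas~\ref{lemma:simple-lifting} and~\ref{lemma:double-lifting} require every component of \(G'\) to be a Gallai graph or isomorphic to \(K_3\) or \(K_5^-\); in the maximum-degree-\(4\) setting a component of \(G'\) can be isomorphic to \(K_5\) (this cannot happen for partial \(3\)-trees, which is why the lemmas are stated as they are), and then the lemmas simply do not apply --- one would need an analogue of Lemma~\ref{lemma:path+K5-orK5} for a path together with a \emph{subdivision} of \(K_5\), which is neither in the paper nor obvious. Second, in case (b) a single lift at \(v\) plus a path through \(v\)'s two remaining edges isolates only \(v\); to reach the two isolated vertices the lemmas demand, you must also kill a second even-degree vertex \(w\), and if \(d(w)=4\) this forces a second lift and a path threading through both \(v\) and \(w\) on exactly their non-lifted edges. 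You have not shown such a path exists (it can fail when \(v\) is a cut vertex, or when the relevant neighbours coincide), and the treewidth-\(3\) proof spends most of its length --- the cut-vertex claims and Properties~\ref{step:1}--\ref{step:3} --- resolving precisely this kind of obstruction in a setting where the vertices involved have far more constrained neighbourhoods. As written, the hardest configurations are acknowledged and deferred rather than resolved, so the proposal is a plausible plan but not a proof; the paper's \(K_4\)-subdivision device is what lets it bypass these difficulties entirely.
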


\begin{proof}
	Suppose, for a contradiction, that the statement does not hold,
	and let \(G\) be a counter-example that minimizes \(n\).
	It is easy to check that the statement is true for graphs with at most five vertices.
	Thus, we may suppose \(n\geq 6\).
	The following claims follow analogously to the the proof of Theorem~\ref{theorem:treewidth-most-three},
	therefore we omit their proofs.

	\begin{claim}\label{claim:no-reducing-max4}
		\(G\) contains no reducing subgraph.
	\end{claim}

	\begin{claim}\label{cl:twm3:not-contain-useful-bridge-max4}
	$G$ contains no useful cut-edge.
	\end{claim}

	By Corollary~\ref{cor:K4-free}, we may suppose that \(G\) contains a subdivision of \(K_4\).
	Thus, let \(H\) be a subdivision of \(K_4\) in \(G\) with a minimum number of edges,
	and let \(x_1,x_2,x_3,x_4\) be the vertices of degree \(3\) in \(H\).
	From now on, for each \(i,j\in\{1,2,3,4\}\) with \(i\neq j\),
	let~\(P_{i,j}\) be the path in~\(H\) joining~\(x_i\) to~\(x_j\),
	where for \(\{i,j\}\neq\{i',j'\}\), the paths~\(P_{i,j}\) and~\(P_{i',j'}\) have no internal vertex in common.
	Let \(S\) be the set of edges in \(G-E(H)\) incident to \(x_1,x_2,x_3,x_4\).
	Since \(G\) has maximum degree~\(4\),
	there is at most one edge in \(S\) incident to \(x_i\), for \(i=1,2,3,4\).
	For each \(i\in\{1,2,3,4\}\), if there is an edge~\(e_i\) in~\(S\)
	incident to~\(x_i\), then let \(e_i = x_iz_i\),
	and put \(Z = \{z_i\colon x_iz_i\in S\}\).

\begin{claim}
	\(|Z\cap V(H)|\leq 1\),
	and if \(Z\cap V(H)= \{z_i\}\), then \(z_i\neq z_j\) for every \(j\in\{1,2,3,4\}\) with \(i\neq j\).
\end{claim}

\begin{proof}
	Suppose that \(z_1\in V(H)\).
	If \(z_1\in V(P_{1,j})\) for any \(j\in\{2,3,4\}\),
	then \(H+x_1z_1 - P_{1,j}(x_1,z_1)\) is a subdivision of \(K_4\) with less edges than \(H\).
	Thus, we can suppose, without loss of generality, that \(z_1 \in V(P_{2,3})\).
	If \(P_{1,j}\) has length at least \(2\), then \(H+x_1z_1-P_{1,j}\) is a subdivision of \(K_4\) with less edges than \(H\).
	Therefore, \(P_{1,j} = x_1x_j\), for \(i=2,3,4\).
	Analogously, if \(z_i\in V(H)\), then \(P_{i,j} = x_ix_j\) for \(j\in \{1,2,3,4\}- i\).
	Since \(z_1\in P_{2,3}\), the length of \(P_{2,3}\) is at least \(2\),
	hence \(z_2,z_3\notin V(H)\).
	Therefore \(|Z\cap V(H)|\leq 2\).
	Now, suppose \(z_4\in V(H)\).
	We have \(z_4\notin V(P_{4,j})\) for \(j=1,2,3\).
	Since~\(P_{1,j}\) has length \(1\) for \(j=2,3,4\),
	we have \(z_4\notin V(P_{1,j})\).
	Therefore, \(z_4\in V(P_{2,3})\),
	hence \(H+x_1z_1+x_4z_4 - x_2\) contains a subdivision of \(K_4\) with less edges than \(H\).
	Therefore \(z_4 \notin V(H)\) and \mbox{\(|Z\cap V(H)|\leq 1\)}.
\end{proof}

	In what follows we divide the proof on whether \(|Z\cap V(H)| = 0\) or \(|Z\cap V(H)| =1\).

	\smallskip
	\noindent\textbf{(i) \(|Z\cap V(H)| = 0\).}
	First, suppose that \(d_S(z_i) = 4\) for some \(i\).
	Thus, we have \(z_1=z_2=z_3=z_4\).
	We claim  that \(P_{i,j}\) has length at most \(1\), for every \(i,j\in\{1,2,3,4\}\) with~\(i\neq j\).
	Indeed, suppose, without loss of generality, that \(P_{1,2}\) has length at least \(2\).
	We have \(\sum_{j=2}^4 |E(P_{1,j})|\geq 4\).
	Thus, \(H+S-x_1\) contains a subdivision of \(K_4\) in \(G\)
	with at most \(|E(H)| -1\) edges,
	a contradiction to the minimality of~\(H\).
	Therefore \(G\) is isomorphic to~\(K_5\), and the result follows.
	Now, suppose \(d_S(z_i) \leq 2\)
	and, without loss of generality, suppose that \(z_1\neq z_4\) and \(z_2\neq z_3\).
	Let \(Q'_{1,4} = z_1x_1 + P_{1,2} + P_{2,3} + P_{3,4} + x_4z_4\)
	and \(Q'_{2,3} = z_2x_2+P_{2,4} + P_{4,1} + P_{1,3} +x_3z_3\)
	(we may ignore the edges \(x_iz_i\) if \(x_iz_i\notin S\), see Figure~\ref{fig:i-2}).
	Clearly, \(\D_{H+S}=\{Q'_{1,4},Q'_{2,3}\}\) is a path decomposition of \(H+S\),
	and the vertices \(x_1,x_2,x_3,x_4\) are isolated in \(G' = G - E(H) - S\).
	Thus, \(H+S\) is a \(2\)-reducing subgraph of \(G\),
	a contradiction to Claim~\ref{claim:no-reducing-max4}.

        \begin{figure}[ht]
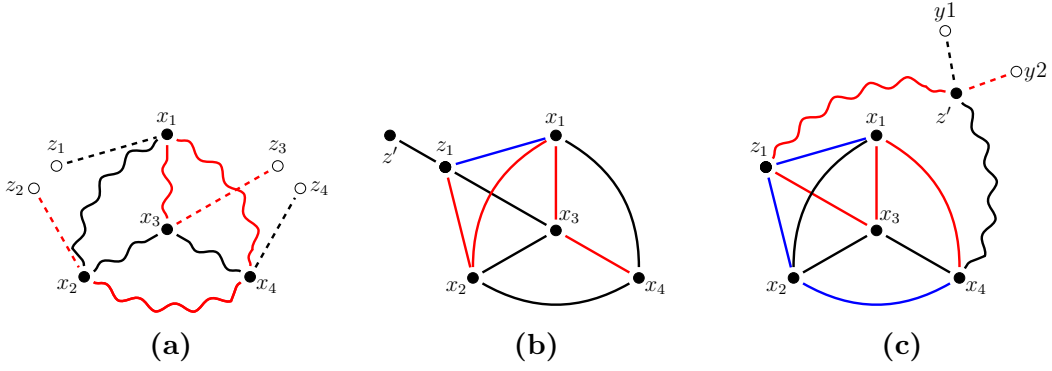

          \centering
          \begin{subfigure}[b]{.3\linewidth}
            \centering\scalebox{.7}{\input{Figures/i-2.tikz}}%
            \caption{}\label{fig:i-2}
          \end{subfigure}%
          \begin{subfigure}[b]{.3\linewidth}
            \centering\scalebox{.7}{\input{Figures/z-1.tikz}}%
            \caption{}\label{fig:z-1}
          \end{subfigure}%
          \begin{subfigure}[b]{.3\linewidth}
            \centering\scalebox{.7}{\input{Figures/z-2.tikz}}%
            \caption{}\label{fig:z-2}
          \end{subfigure}%
          \caption{illustrations of the case $|Z\cap V(H)|=0$.}
        \end{figure}

	Thus, assume that \(d_S(z_i) = 3\) for some \(i\).
	We may suppose, without loss of generality, that \(z_1=z_2=z_3\).
	Note that \(H'=H+x_1z_1+x_2z_2+x_3z_3\) is a subdivision of~\(K_5^-\),
	where~\(z_1\) and~\(x_4\) are its vertices of degree \(3\).
	By Corollary~\ref{cor:K5--subdivision}, If \(H'\) is a proper subdivision of \(K_5^-\),
	then \(\pn(H') = 2\).
        Thus, there exist two paths \(P'\) and \(Q'\) such that \(H' = P' + Q'\).
	Since \(x_4\) has degree \(3\) in \(H'\),
	one between \(P'\) and \(Q'\), say \(P'\), has \(x_4\) as an end-vertex.
	If \(x_4z_4\in S\), then put \(P = P'+x_4z_4\).
	Thus, \(\{P,Q'\}\) is a decomposition of \(H+S\) into two paths,
	and the vertices \(x_1,x_2,x_3,x_4\) are isolated vertices of \(G' = G - E(H) - S\).
	Thus, \(H+S\) is a \(2\)-reducing subgraph of \(G\),
	a contradiction to Claim~\ref{claim:no-reducing-max4}.

	Now, suppose \(H'\) is isomorphic to \(K_5^-\).
	If \(G=H'\), then the statement holds.
	Thus, we can suppose \(G' = G-E(H')\) is not empty.
	Moreover, \(d_{G'}(z_1)+d_{G'}(x_4)\geq 1\).
	Suppose, without loss of generality, that \(d_{G'}(z_1)=1\),
	and let \(z'\) be the neighbor of \(z_1\) in \(G'\).
	Suppose that there is no path in \(G'\) joining \(z_1\) and \(x_4\),
	then \(z_1z'\) is a cut-edge, and by Claim~\ref{cl:twm3:not-contain-useful-bridge-max4},
	we have \(d(z') = 1\).
	If \(G = H' + z_1z'\), then \(\{z'z_1x_3x_2x_4x_1,z_1x_2x_1x_3x_4,x_1z_1\}\) (see Figure~\ref{fig:z-1})
	is a decomposition of \(G\) with size \(3 = n/2\).
	Thus, we may suppose that the vertex \(x_4\) has a neighbor \(x'\) in \(G'\).
	Analogously to the case above \(d(x')=1\) and,
        therefore, any path joining \(x'\) to \(z'\) is a reducing path,
	a contradiction to Claim~\ref{claim:no-reducing-max4}.

	Thus, we may suppose that there is a path in \(G'\) joining \(z_1\) to \(x_4\)
	and let \(P'\) be the shortest path in \(G'\) with this property.
	If \(P'\) has length \(1\), then \(G\) is isomorphic to \(K_5\), and the result follows.
	Thus, let \(z'\) be an internal vertex of \(P'\)
	and let \(S'\) be edges incident to \(z'\) in \(G-E(H')-E(P')\).
	Put \(H''=H'+P'+S'\).
	Note that the edges in \(S'\) are not incident to vertices of \(H'\),
	because \(d_{H'+P'}(v) = 4\) for every \(v\in V(H')\).
	Also, by the minimality of~\(P'\), the edges in \(S'\) are not incident to other vertices
	of \(P'\).
	Suppose \(S' = \{z'y_1,z'y_2\}\)
	and note that \(\{y_1z'+P'(z',x_4) + x_4x_3x_2x_1,y_2z'+P'(z',z_1)
						+ z_1x_3x_1x_4,x_1z_1x_2x_4\}\)
	is a path decomposition of \(H''\)
	(we may ignore the edges \(z'y_1,z'y_2\) if \(z'y_1,z'y_2\notin E(G)\), see Figure~\ref{fig:z-2}).
	Now, note that \(z'\) and every vertex of \(H'\) is isolated in \(G-E(H'')\).
	Thus \(G-E(H'')\) has at least \(6\) isolated vertices.
	Therefore, \(H''\) is a \(3\)-reducing subgraph of \(G\),
	a contradiction to Claim~\ref{claim:no-reducing-max4}.
        This finishes the proof of case (i).

	\smallskip
	\noindent\textbf{(ii) \(|Z\cap V(H)| = 1\).}
	Put \(G' = G - E(H) - S\).
	Suppose, without loss of generality, that \(z_1\in Z\cap V(P_{2,3})\).
	As seen above, \(z_1 \neq z_2,z_3,z_4\).
	First, suppose \(d_S(z_i) \leq 2\).
	By symmetry, we can suppose \(z_2\neq z_4\).
	Let \(Q'_{2,4} = z_2x_2 + P_{2,3}  + x_3x_1x_4z_4\) and  \(Q'_{1,3} = z_1x_1x_2 + P_{2,4} + P_{4,3} + x_3z_3\)
	(we may ignore the edges \(x_iz_i\) if \(x_iz_i\notin S\), see Figure~\ref{fig:z-3}).
	Clearly, \(\D_{H+S}=\{Q'_{2,4},Q'_{1,3}\}\) is a path decomposition of \(H+S\)
	and the vertices \(x_1,x_2,x_3,x_4\) are isolated vertices of \(G'\).
	Thus, \(H+S\) is a \(2\)-reducing subgraph of \(G\),
	a contradiction to Claim~\ref{claim:no-reducing-max4}.

	Now, suppose \(d_S(z_i) = 3\) for some \(i\in\{2,3,4\}\), i.e., \(z_2=z_3=z_4\).
	If \(P_{i,4}\) has length at least \(2\) for any \(i\in\{1,2,3\}\),
	then \(H+S-x_4\) contains a subdivision of \(K_4\) with less than \(|E(H)|\) edges.
	Consider the paths \(Q_1' = z_1x_1x_2x_4x_3z_4\) and \(Q_2' = z_4x_2 + P_{2,3} + x_3x_1x_4\) (see Figure~\ref{fig:z-4}).
	If \(z_1z_4\in E(G)\), then \(\{Q_1',Q_2',z_1z_4x_4\}\) decompose \(H'=H +S+z_1z_4\)
	and \(z_1,z_4,x_1,x_2,x_3,x_4\) are isolated in \(G - H'\),
	hence \(H'\) is a \(3\)-reducing subgraph of \(G\),
	a contradiction to Claim~\ref{claim:no-reducing-max4}.
	Thus, we may suppose that \(z_1z_4\notin E(G)\).
	If there is an edge \(z_1z'\) in \(G'\) incident to \(z_1\), then put \(Q_1 = Q_1'+z_1z'\), otherwise, put \(Q_1 = Q_1'\).
	Now, note that \(x_1,x_2,x_3,z_1\) are isolated vertices in \(G - E(Q_1) - E(Q_2')\),
	hence \(Q_1+Q_2'\) is a \(2\)-reducing subgraph of \(G\),
	a contradiction to Claim~\ref{claim:no-reducing-max4}.
\end{proof}

\begin{figure}[h]
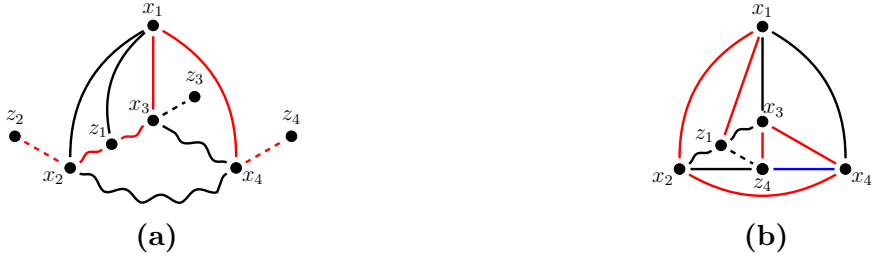

	\centering
	\begin{subfigure}[b]{.5\linewidth}
	\centering\scalebox{.7}{\input{Figures/z-3.tikz}}
	\caption{}\label{fig:z-3}
	\end{subfigure}%
	\begin{subfigure}[b]{.5\linewidth}
	\centering\scalebox{.7}{\input{Figures/z-4.tikz}}
	\caption{}\label{fig:z-4}
	\end{subfigure}%
	\caption{illustrations of the case $|Z\cap V(H)|=1$.}
\end{figure}

\subsection{Haj\'os Conjecture for graphs with maximum degree~$4$}\label{sec:max4-gallai}

In this section we present a new proof for the result given by Granville and Moisiadis~\cite{GranvilleMoisiadis87},
which verifies Conjecture~\ref{conj:hajos} for graphs with maximum degree \(4\).
For the base cases in our proof we use the following result of Jackson~\cite{jackson80}.

\begin{theorem}[Jackson, 1980]\label{thm:jackson}
	Every \(2\)-connected \(k\)-regular graph with at most \(3k\) vertices is Hamiltonian.
\end{theorem}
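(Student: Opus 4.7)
The plan is to proceed by contradiction: assume $G$ is $2$-connected, $k$-regular, has $n \leq 3k$ vertices, and admits no Hamilton cycle. Fix a longest cycle $C$ with cyclic order $c_0 c_1 \cdots c_{\ell-1}$. Because $C$ is not Hamiltonian, the graph $R = G - V(C)$ is nonempty; pick a component $H$ of $R$. By $2$-connectedness of $G$, there are at least two internally disjoint paths from $V(H)$ to $V(C)$, so $H$ has neighbors at two distinct vertices of $C$, say $c_a$ and $c_b$.

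Next I would construct a maximal ``bridge'' between $c_a$ and $c_b$ through $H$: choose a path $P = x_0 x_1 \cdots x_p$ with $x_0, x_p \in V(H)$ and all internal vertices in $V(H)$, together with edges $x_0 c_a$ and $x_p c_b$, of maximum length. The maximality forces every neighbor of $x_0$ and $x_p$ inside $R$ to lie on $P$. Let $C_1$ and $C_2$ denote the two arcs of $C$ from $c_a$ to $c_b$. Since $C$ is a longest cycle, replacing either arc by $P$ together with its attachment edges cannot yield a longer cycle, so $|C_1|, |C_2| \geq p + 2$.

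The heart of the argument is a degree-counting step in the spirit of the classical Pósa / Bondy rotation technique. For each neighbor $c_i$ of $x_0$ on $C_1$, the cycle-rotation argument shows that the vertex immediately following $c_i$ on $C_1$ (in the direction of $c_b$) cannot be a neighbor of $x_p$; otherwise one could reroute through $P$ to obtain a cycle strictly longer than $C$. A symmetric statement holds on $C_2$. Combining these forbidden positions with the fact that $x_0$ and $x_p$ together account for $2k$ neighbors (counted with multiplicity), of which at most $p-1$ lie on $P$ and of which the remaining must be distributed between $C_1$ and $C_2$ under the above exclusion rules, I would derive
\[
|V(C_1)| + |V(C_2)| \;\geq\; 2k + (\text{correction terms involving } p),
\]
and hence $|V(C)| \geq 2k + 1$ roughly. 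Coupling this with $|V(C)| + |V(H)| \leq n \leq 3k$ and the lower bound $|V(H)| \geq p + 1 \geq k - |V(C)|/2 + \mathrm{const}$ that comes from the degree of $x_0$ into $H$, I expect a numerical contradiction with $n \leq 3k$.

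The main obstacle will be the careful bookkeeping in the counting step, especially the degenerate cases: when $p = 0$ (so $x_0 = x_p$ is a single ``pendant'' attached by two internally disjoint paths to $C$), when the neighbors of $x_0$ and $x_p$ on $C$ overlap heavily, and when $c_a$ and $c_b$ are adjacent on $C$ (so one of the arcs is trivial). These are precisely the configurations that make the bound $3k$ tight, and dispatching them cleanly is what distinguishes Jackson's argument from the simpler Dirac-type Hamiltonicity theorems.
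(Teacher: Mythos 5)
The paper does not prove this statement at all; it is imported verbatim from Jackson's 1980 paper \emph{Hamilton cycles in regular 2-connected graphs} (J.\ Combin.\ Theory Ser.\ B), so there is no internal proof to compare against. Your proposal is an outline of the right \emph{kind} of argument (longest cycle, a maximal bridge through a component of the remainder, and a P\'osa--Bondy crossing/counting step), which is indeed the spirit of Jackson's proof, but as written it has a genuine gap: the entire content of the theorem lives in the counting step that you leave as ``correction terms involving $p$'' and ``roughly $|V(C)|\geq 2k+1$.''

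Concretely, the following points do not go through as stated. (1) The claim that maximality of $P$ forces every neighbour of $x_0$ and $x_p$ inside $R$ to lie on $P$ is false in general for an arbitrary longest $x_0$--$x_p$ path in a component $H$; neighbours of $x_0$ in $H\setminus V(P)$ are only excluded if they would extend $P$, which requires a careful choice of $P$ (and rotations inside $H$), not just maximality of its length. (2) The exclusion rule ``the successor of a neighbour of $x_0$ on $C_1$ cannot be a neighbour of $x_p$'' yields, after summing over both arcs, an inequality of the shape $|V(C)|\geq d_C(x_0)+d_C(x_p)+(\text{arc corrections})$, but since $x_0$ and $x_p$ may share many neighbours on $C$ and each has up to $p-1$ neighbours on $P$ and further neighbours in $H$, the resulting bound is far weaker than $|V(C)|\geq 2k+1$ without substantial extra work; this is exactly where Jackson needs several pages of case analysis. (3) The inequality $|V(H)|\geq p+1\geq k-|V(C)|/2+\mathrm{const}$ is asserted without justification, and the final ``numerical contradiction with $n\leq 3k$'' is never exhibited. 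The degenerate configurations you correctly flag ($p=0$, adjacent attachment vertices, heavily overlapping neighbourhoods) are precisely the tight cases, and the proposal does not dispatch any of them. As it stands this is a plausible research plan, not a proof; for the purposes of this paper the correct move is simply to cite Jackson, as the authors do.
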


The following lemma is useful to obtain cycle decompositions from closed trails in a graph with maximum degree \(4\).

\begin{lemma}\label{lemma:double-paths}
	Let \(G\) be a graph composed of two paths \(P_1\) and \(P_2\)
	that share the same end-vertices \(x\) and \(y\),
	and let \(S = V(P_1)\cap V(P_2) - x - y\).
	Then \(\cn(G)\leq |S|+1\).
\end{lemma}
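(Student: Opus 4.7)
The plan is to encode the cycle-decomposition problem for $G$ as a cycle-decomposition problem for an auxiliary Eulerian multigraph $H$ on the ``special'' vertex set $S \cup \{x,y\}$. Set $k := |S|$ and list the vertices of $S \cup \{x,y\}$ in their order of appearance along $P_1$ as $x = u_0, u_1, \ldots, u_k, u_{k+1} = y$, and along $P_2$ as $x = v_0, v_1, \ldots, v_k, v_{k+1} = y$. For each $i \in \{0, \ldots, k\}$ let $A_i := P_1(u_i, u_{i+1})$ and $B_i := P_2(v_i, v_{i+1})$. By construction the internal vertices of $A_i$ lie in $V(P_1) \setminus V(P_2)$ and those of $B_i$ in $V(P_2) \setminus V(P_1)$, so these $2(k+1)$ \emph{segments} are pairwise internally vertex-disjoint (and edge-disjoint, since they sit inside the paths $P_1$ and $P_2$).

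I would then define the multigraph $H$ on $S \cup \{x,y\}$ by inserting one edge joining $u_i$ and $u_{i+1}$ for each $A_i$ and one edge joining $v_j$ and $v_{j+1}$ for each $B_j$. A direct count gives $|E(H)| = 2(k+1)$, and inspecting each vertex yields $d_H(x) = d_H(y) = 2$ and $d_H(s) = 4$ for every $s \in S$, so $H$ is Eulerian and thus admits a cycle decomposition $\mathcal{D}_H$. Since $P_1$ and $P_2$ are simple paths, every segment joins two distinct vertices of $S \cup \{x, y\}$; hence $H$ has no loops and every cycle of $H$ has length at least~$2$. Consequently,
$$ |\mathcal{D}_H| \leq \frac{|E(H)|}{2} = k+1. $$

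The final step is to lift $\mathcal{D}_H$ to a cycle decomposition of $G$ by replacing each edge of $H$ with its corresponding segment. The main (and only mildly technical) obstacle will be verifying that each lifted closed walk is a genuine cycle rather than a closed walk with repeated vertices: the special vertices traversed by a given cycle of $H$ are distinct because $H$-cycles have no repeated vertex, while the internal vertices of distinct segments are disjoint by construction, so no vertex of $G$ appears twice along the lift. The resulting cycle decomposition of $G$ has at most $|S|+1$ cycles, as required.
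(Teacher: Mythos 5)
Your argument is correct, but it proceeds quite differently from the paper's. The paper proves the lemma by induction on $|S|$: it picks the common vertex $y'$ closest to $y$ along $P_1$, peels off the cycle $P_1(y',y)\cup P_2(y',y)$ (which is a genuine cycle because no further common vertex lies strictly between $y'$ and $y$ on $P_1$), and applies the induction hypothesis to $P_1(x,y')\cup P_2(x,y')$, whose set of interior common vertices has size at most $|S|-1$. You instead contract each of the $2(|S|+1)$ internally disjoint segments to a single edge of an auxiliary multigraph $H$ on $S\cup\{x,y\}$, observe that $H$ is Eulerian (degree $4$ on $S$, degree $2$ at $x$ and $y$, connected via the $P_1$-segments), take a cycle decomposition of $H$, bound its size by $|E(H)|/2=|S|+1$ using the absence of loops, and lift back. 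Both routes are sound; the only points worth making explicit in yours are that a ``cycle'' of $H$ may be a digon formed by parallel edges (whose lift is still a cycle of $G$ because the two corresponding segments are edge-disjoint and internally disjoint, and cannot both be the same single edge since $G$ is simple), and that the segments partition $E(G)$ so the lift really is a decomposition. The paper's induction is slightly more elementary, avoiding multigraphs and Veblen's theorem; your reduction makes the source of the bound more transparent (it is exactly $|E(H)|/2$, with equality only when every cycle of $H$ is a digon) and would adapt readily to more than two paths sharing the same ends.
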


\begin{proof}
	Let \(G\), \(P_1\), \(P_2\), \(x\), \(y\) and \(S\) be as in the statement.
	The proof follows by induction on~\(s=|S|\).
	If \(s=0\), then \(G\) is a cycle, and the statement holds.
	Thus, suppose \(s>0\) and suppose that the statement holds for \(s'<s\).
	Let \(y'\) be the vertex of \(V(P_1) \cap V(P_2)-x-y\)
	that minimizes the length of \(P_1(y',y)\).
	Put \(P_1' = P_1(x,y')\) and \(P_2'=P_2(x,y')\).
	Clearly \(|V(P_1')\cap V(P_2')-x-y'| =s'\leq s-1\).
	By the induction hypothesis, \(P_1' \cup P_2'\) admits a decomposition \(\D'\)
	into at most \(s'+1 \leq s\) cycles.
  Note that \(P_1(y',y)\) contains no vertex of~\(P_2\),
	thus, \(C=P_1(y',y)\cup P_2(y',y)\) is a cycle,
	and \(\D+C\) is a decomposition of \(G\) of size at most~\(s+1\).
\end{proof}

\begin{theorem}
	If \(G\) is an Eulerian graph with \(n\) non-isolated vertices and maximum degree \(4\),
	then \(\cn(G)\leq\lfloor (n-1)/2\rfloor\).
\end{theorem}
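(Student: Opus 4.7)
The plan is to induct on $n$ within the cycle-reducing framework of Section~\ref{sec:hajos}, paralleling the arguments used in Theorems~\ref{theorem:max4} and~\ref{thm:hajos-treewidth3}. Let $G$ be a minimum counterexample. First, the block-cut decomposition used in the proof of Theorem~\ref{thm:hajos-treewidth3} reduces the problem to the case where $G$ is 2-connected: if $v$ is a cut-vertex of $G$ with blocks $B_1,\ldots,B_k$, each block is Eulerian with maximum degree~$4$, and
\[\sum_i \lfloor (|V(B_i)|-1)/2 \rfloor \leq \lfloor (n-1)/2 \rfloor,\]
using $\sum_i (|V(B_i)|-1) = n-1$ and the subadditivity $\lfloor a/2\rfloor + \lfloor b/2\rfloor \leq \lfloor (a+b)/2\rfloor$. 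Moreover, by Lemma~\ref{lemma:cycle-reducing}, $G$ contains no proper cycle reducing subgraph. Since $G$ is Eulerian with maximum degree~$4$, every vertex has degree~$2$ or~$4$.

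Next I would exploit degree-$2$ vertices. Let $v$ have degree~$2$ with $N(v)=\{a,b\}$. If $ab\notin E(G)$, form $G' = (G-v)+ab$ by lifting at $v$; then $G'$ is Eulerian with maximum degree~$4$ and $n-1$ non-isolated vertices, so by induction $\cn(G') \leq \lfloor (n-2)/2 \rfloor$, and unlifting the cycle containing $ab$ in a minimum cycle decomposition of $G'$ yields a cycle decomposition of $G$ of the same size. If $ab\in E(G)$, then the triangle $T=vab$ isolates $v$ in $G-E(T)$; whenever $\min\{d(a),d(b)\}=2$, a second vertex is also isolated and $T$ is a reducing cycle, a contradiction to Lemma~\ref{lemma:cycle-reducing}. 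The tight remaining subcase is that every degree-$2$ vertex has both neighbors of degree~$4$ and adjacent in $G$.

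For the fully 4-regular case (and the tight subcase above), the plan is to invoke Jackson's Theorem~\ref{thm:jackson}. When $G$ is 2-connected and 4-regular with $n\leq 12$, Jackson's theorem gives a Hamilton cycle $H$; then $G-E(H)$ is a spanning 2-regular subgraph decomposing into $k$ cycles of total length~$n$, so $\cn(G)\leq k+1 \leq n/3+1$, which is at most $\lfloor (n-1)/2\rfloor$ for $n\geq 9$. The small cases $n\in\{5,6,7,8\}$ are verified by hand: for instance, $K_5$ and the octahedron $K_{2,2,2}$ each admit two edge-disjoint Hamilton cycles. For $n>12$ in the 4-regular case, and for the leftover tight triangle configuration, I would build a larger cycle reducing subgraph---typically a $4$-cycle through $v$ via a common neighbor of $a$ and $b$, or the union of two vertex-disjoint short cycles---arranged so that $G-E(H)$ has at least $2r$ isolated vertices while $\cn(H)\leq r$, closing the case via Lemma~\ref{lemma:cycle-reducing}. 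Alternatively, Lemma~\ref{lemma:double-paths} applied to an Eulerian trail of $G$ split at two degree-$2$ vertices can express most of $G$'s edge set as the union of two paths whose internal intersection is bounded, yielding the desired cycle count directly.

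The principal obstacle is the 4-regular case when $n>12$, where Jackson's theorem does not apply, together with the tight triangle configuration when $n$ is even; in both, the naive reductions fall short of $\lfloor (n-1)/2\rfloor$ by exactly one, and the technical heart of the proof lies in designing the correct cycle reducing subgraph (or choosing the splitting vertices for Lemma~\ref{lemma:double-paths}) to close that final gap.
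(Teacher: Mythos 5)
Your preliminary reductions are sound and match the paper's: the block decomposition forces a minimum counterexample to be $2$-connected, Lemma~\ref{lemma:cycle-reducing} forbids proper cycle reducing subgraphs, and Jackson's theorem handles the small $2$-connected cases (the paper uses it only for $n\leq 8$, padding a degree-$2$ vertex with a copy of $K_5^-$ to reach $4$-regularity, but your extension to $n\leq 12$ in the $4$-regular case is fine). Your lifting argument at a degree-$2$ vertex with non-adjacent neighbors is a reasonable alternative to the paper's simpler observation that two degree-$2$ vertices lie on a common cycle (by $2$-connectivity), which is a reducing cycle; either way one is left with an essentially $4$-regular graph.

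The genuine gap is that the entire main case --- $n\geq 9$ with (almost) all vertices of degree $4$ --- is left unresolved, and you say so yourself: you defer to an unspecified ``larger cycle reducing subgraph'' and call its construction ``the technical heart of the proof.'' That heart is precisely what must be supplied. Your candidate constructions do not work: a ``$4$-cycle through $v$ via a common neighbor of $a$ and $b$'' presupposes a degree-$2$ vertex, which is absent in the $4$-regular case, and ``the union of two vertex-disjoint short cycles'' is neither shown to exist nor shown to isolate at least $2r$ vertices. The paper's resolution is quite different and nontrivial: by Corollary~\ref{cor:K4-free-hajos} one may assume $G$ contains a subdivision $H$ of $K_4$ with branch vertices $x_1,\ldots,x_4$ (chosen with a minimum number of edges); adding an auxiliary vertex $v$ adjacent to the $x_i$ in $G-E(H)$ produces an Eulerian graph whose cycle decomposition yields two edge-disjoint paths $Q_1$ (joining $x_1,x_2$) and $Q_2$ (joining $x_3,x_4$) such that $H^*=H+Q_1+Q_2$ is Eulerian and isolates $x_1,\ldots,x_4$ in $G-E(H^*)$. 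The paper then exhibits six circuit decompositions of $H^*$, converts each to a cycle decomposition via Lemma~\ref{lemma:double-paths} with sizes $r_1,\ldots,r_6$ satisfying $\sum_{i=1}^{6}r_i=16+2\Sigma$ (where $\Sigma$ counts internal intersections of the $Q_k$ with the branch paths), and an averaging argument shows some $r_i$ is small enough to make $H^*$ a cycle reducing subgraph, with a separate treatment of the residual case $\Sigma\leq 1$. None of this machinery, nor any workable substitute for it, appears in your proposal, so the proof is incomplete exactly where the difficulty lies.
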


\begin{proof}
	Suppose, for a contradiction, that the statement does not hold,
	and let \(G\) be a counter-example that minimizes \(n\).
	Analogously to the proof of Theorem~\ref{thm:hajos-treewidth3},
	we can prove that \(G\) is \(2\)-connected.
	Moreover, \(G\) contains at most one vertex of degree \(2\),
	otherwise \(G\) contains a reducing cycle.
	Thus, \(G\) contains at least one vertex of degree~\(4\),
	hence \(n\geq 5\).
	Now, suppose that \(n\leq 8\).
	We claim that \(G\) contains a Hamiltonian cycle.
	If \(G\) contains a vertex \(v\) of degree \(2\),
	then let \(x,y\) be the neighbors of \(v\),
	and we replace~\(v\) by a copy of~\(K_5^-\)
	in which the each vertex of degree~\(3\)
	is joined to one between~\(x\) and~\(y\).
	The graph~\(G'\) obtained is \(4\)-regular and contains \(n+4\) vertices.
	If \(G\) does not contain a vertex of degree \(2\), then we put \(G' = G\).
	Since \(n\leq 8\), \(G'\) has at most \(12\) vertices,
	and by Theorem~\ref{thm:jackson}, \(G'\) contains a Hamiltonian cycle \(C'\).
	It is not hard do check that we can obtain from \(C'\) a Hamiltonian cycle \(C\) of \(G\).
	Now, if \(n=5\), then \(G\) must be a copy of \(K_5\) and \(\cn(G) = 2\).
	Suppose that \(n=6\).
	If \(G-E(C)\) is a cycle, then \(\cn(G) = 2\).
	If \(G-E(C)\) consists of copies of \(C_3\),
	then \(G\) is the graph obtained from \(K_6\) by removing a perfect matching.
	It is not hard to check that, in this case, \(G\) can be decomposed into two Hamiltonian cycles.
	Therefore, we may suppose \(n\geq 7\), then \(G-E(C)\) is a \(2\)-regular graph and consists of two vertex-disjoint cycles,
	hence \(\cn(G) \leq 3\), and the statement holds.

	Thus, from now on, we suppose that \(n\geq 9\).
	By Corollary~\ref{cor:K4-free-hajos},
	we may suppose that \(G\) contains a subdivision of \(K_4\), say \(H\).
	Let \(x_1,x_2,x_3,x_4\) be the vertices of \(H\) with degree \(3\).
From now on, for each \(i,j\in\{1,2,3,4\}\) with \(i\neq j\),
let~\(P_{i,j}\) be the path in \(H\) joining~\(x_i\) to~\(x_j\),
where for \(\{i,j\}\neq\{i',j'\}\), the paths~\(P_{i,j}\) and~\(P_{i',j'}\)
have no internal vertex in common.
	Let~\(G'\) be the graph obtained from \(G-E(H)\) by adding a new vertex \(v\)
	adjacent to \(x_1,x_2,x_3,x_4\).
	Clearly, \(G'\) is an Eulerian graph, hence admits a cycle decomposition \(\D'\).
	Let \(C_1',C_2'\in\D'\) be cycles containing \(v\).
	Suppose, without loss of generality, that \(C_1'\) contains \(x_1,x_2\)
	and \(C_2'\) contains \(x_3,x_4\).
	Let \(Q_1 = C_1'-v\) and \(Q_2 = C_2'-v\),
	and put \(H^* = H + Q_1+Q_2\).
	The graph \(H^*\) is an Eulerian graph and \(x_1,x_2,x_3,x_4\)
	are isolated vertices in \(G^*=G-E(H^*)\).
	We claim that \(H^*\) is a cycle reducing subgraph of \(G\).
	For each \(k=1,2\) and \(i,j\in\{1,2,3,4\}\) with \(i\neq j\),
	let \(S^k_{i,j}\) be the set of internal vertices of \(P_{i,j}\) in \(V(Q_k)\),
	and put \(s^k_{i,j} = |S^k_{i,j}|\).
	Furthermore, put \(\Sigma = \sum_{k=1,2,\\ i,j\in \{1,2,3,4\}\\i\neq j} s^k_{i,j}\).
	In what follows, we give six decompositions of \(H^*\) into circuits.
	On each of these decompositions, we use Lemma~\ref{lemma:double-paths},
	to obtain a decomposition of \(H^*\) into cycles.
	Let \(\D_1 = \big\{Q_1+P_{1,4} + P_{4,3} + P_{3,2},Q_2+P_{3,1} + P_{1,2}+P_{2,4}\big\}\) (see Figure~\ref{fig:d1}).
	By Lemma~\ref{lemma:double-paths}, \(Q_1+P_{1,4} + P_{4,3} + P_{3,2}\)
	admits a decomposition into \(s^1_{1,4} + s^1_{4,3} + s^1_{3,2} + 1\) cycles,
	and \(Q_2+P_{3,1} + P_{1,2}+P_{2,4}\)
	admits a decomposition into \(s^2_{3,1} + s^2_{1,2} + s^2_{2,4} + 1\) cycles,
	hence \(H^*\) admits a decomposition into
	\(r_1=2 + s^1_{1,4} + s^1_{4,3} + s^1_{3,2} + s^2_{3,1} + s^2_{1,2} + s^2_{2,4}\) cycles.
	Analogously,
	with \(\D_2 = \big\{Q_1+P_{1,3} + P_{3,4} + P_{4,2},Q_2+P_{3,2} + P_{2,1}+P_{1,4}\big\}\) (see Figure~\ref{fig:d2}),
	we obtain a decomposition of \(H^*\)
	into \(r_2=2+ s^1_{1,3} + s^1_{3,4} + s^1_{4,2} + s^2_{3,2} + s^2_{2,1} + s^2_{1,4}\) cycles.
	Similarly,
	with \(\D_3 = \big\{Q_1+P_{1,2},Q_2+P_{3,2} + P_{2,4}, P_{1,3}+P_{3,4}+P_{4,1}\big\}\) (see Figure~\ref{fig:d3}),
	we obtain a decomposition of \(H^*\)
	into \(r_3 = 3 + s^1_{1,2} + s^2_{3,2}+s^2_{2,4}\) cycles
	(note that \(P_{1,3}+P_{3,4}+P_{4,1}\) is a cycle).
	Analogously,
	with \(\D_4 = \big\{Q_1+P_{1,2},Q_2+P_{3,1} + P_{1,4},P_{2,3}+P_{3,4}+P_{4,2}\big\}\) (see Figure~\ref{fig:d4}),
	we obtain a decomposition of \(H^*\)
	into \(r_4 = 3 + s^1_{1,2} + s^2_{3,1}+s^2_{1,4}\) cycles;
	with \(\D_5 = \big\{Q_1+P_{1,3} + P_{3,2},Q_2+P_{3,4},P_{1,4}+P_{4,2}+P_{2,1}\big\}\) (see Figure~\ref{fig:d5}),
	we obtain a decomposition of \(H^*\)
	into \(r_5 = 3 + s^1_{1,3} + s^1_{3,2}+s^2_{3,4}\) cycles;
	and
	with \(\D_6 = \big\{Q_1+P_{1,4} + P_{4,2},Q_2+P_{3,4},P_{1,3}+P_{3,2}+P_{2,1}\big\}\) (see Figure~\ref{fig:d6}),
	we obtain a decomposition of \(H^*\)
	into \(r_6 = 3 + s^1_{1,4} + s^1_{4,2}+s^2_{3,4}\) cycles.
        \begin{figure}[ht]
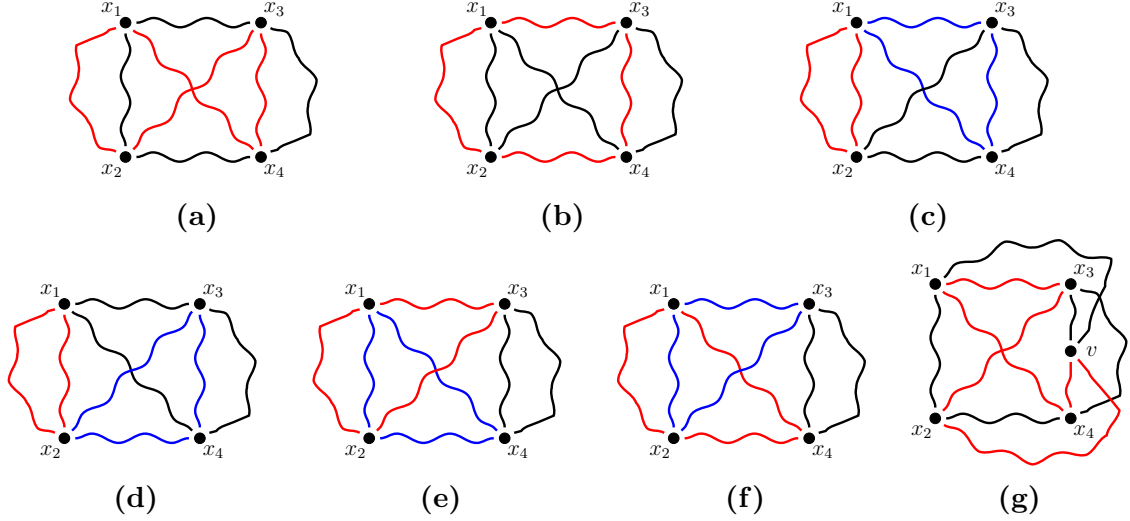

          \centering
          \begin{subfigure}[b]{.3\linewidth}
            \centering\scalebox{.7}{\input{Figures/d1.tikz}}%
            \caption{}\label{fig:d1}
          \end{subfigure}%
          \begin{subfigure}[b]{.3\linewidth}
            \centering\scalebox{.7}{\input{Figures/d2.tikz}}%
            \caption{}\label{fig:d2}
          \end{subfigure}%
          \begin{subfigure}[b]{.3\linewidth}
            \centering\scalebox{.7}{\input{Figures/d3.tikz}}%
            \caption{}\label{fig:d3}
          \end{subfigure}%

          \begin{subfigure}[b]{.25\linewidth}
            \centering\scalebox{.7}{\input{Figures/d4.tikz}}%
            \caption{}\label{fig:d4}
          \end{subfigure}%
          \begin{subfigure}[b]{.25\linewidth}
            \centering\scalebox{.7}{\input{Figures/d5.tikz}}%
            \caption{}\label{fig:d5}
          \end{subfigure}%
          \begin{subfigure}[b]{.25\linewidth}
            \centering\scalebox{.7}{\input{Figures/d6.tikz}}%
            \caption{}\label{fig:d6}
          \end{subfigure}%
		\begin{subfigure}[b]{.2\linewidth}
			\centering\scalebox{.7}{\input{Figures/z5.tikz}}%
			\caption{\label{fig:z-5}}
		\end{subfigure}
          \caption{Figures~(a)--(f) show the six decompositions \(D_1, D_2, D_3, D_4, D_5, D_6\) of \(H^*\);
          		Figure~(g) shows the case where \(s_{3,4}^1=1\) and \(s_{i,j}^k=0\) for \((k,i,j)\neq (1,3,4)\).}
        \end{figure}

	Finally, it is not hard to check that \(\sum_{i=1}^6 r_i = 16 + 2\Sigma\).
	Note that, \(G^*\) has at least \(4+\Sigma\) isolated vertices, hence \(n\geq 4+\Sigma\).
	Since \(n\geq 9\), we have \(n-1 \geq \max\{3+\Sigma,8\}\).
	First, suppose that \(H^*=G\).
	If \(r_i\leq \lfloor (n-1)/2\rfloor\) for any \(i\in\{1,2,3,4,5,6\}\),
	then the statement holds.
	Thus, suppose, for a contradiction, that \(r_i\geq \lfloor (n-1)/2\rfloor + 1 \geq \lfloor \max\{3+\Sigma,8\}/2\rfloor +1\)
	for every \(i=1,2,3,4,5,6\).
	Therefore,
	\begin{align*}
	16 + 2\Sigma =
	\sum_{i=1}^6 r_i &\geq
	6\big(\lfloor \max\{3+\Sigma,8\}/2\rfloor + 1\big) \\
	&\geq  6\big(\max\{(2+\Sigma)/2,4\}+1\big) \\
	&= \max\{6+3\Sigma,24\} + 6 \\
	&= \max\{12+3\Sigma,30\}.
	\end{align*}
	Therefore, we have \(\Sigma \leq 4\) and \(\Sigma \geq 7\), a contradiction.
	Therefore, if \(H^*=G\), \(r_i \leq \lfloor (n-1)/2\rfloor\) for some \(i\in\{1,2,3,4,5,6\}\),
	which implies that \(G\) is a Haj\'os graph.

	Thus, we may assume that \(H^*\neq G\).
	If \(r_i\leq \lfloor (4+\Sigma)/2\rfloor\) for any \(i\in\{1,2,3,4,5,6\}\),
	then \(H^*\) is a proper cycle reducing subgraph of \(G\).
	Thus, suppose, for a contradiction, that \(r_i\geq \lfloor (4+\Sigma)/2\rfloor + 1\)
	for every \(i=1,2,3,4,5,6\).
	Therefore,
	\(
	16 + 2\Sigma =
	\sum_{i=1}^6 r_i \geq
	6\left(\lfloor (4+\Sigma)/2\rfloor + 1\right)
	= 18 + 6\lfloor \Sigma/2\rfloor
	\geq 18 + 6\left(\frac{\Sigma-1}{2}\right)
	\)
	which implies that \(\Sigma\leq 1\).
	Now, if \(\Sigma=0\)
	or one in \(\{s^1_{1,4},s^1_{2,3},s^1_{1,2},s^2_{4,2},s^2_{3,1},s^2_{4,3}\}\)
	is at least \(1\),
	then \(\D_2\) is a cycle decomposition of \(H^*\) of size \(2\).
	If one in \(\{s^1_{1,3},s^1_{2,4},s^2_{4,1},s^2_{3,2}\}\)
	is at least \(1\),
	then \(\D_1\) is a cycle decomposition of \(H^*\) of size \(2\).
	Thus, \(s^1_{3,4}=1\) or \(s^2_{1,2}=1\).
	Suppose, without loss of generality, that \(s^1_{3,4}=1\) and \(S^1_{3,4} = \{v\}\),
	and let \(R_1=Q_1(x_1,v)\) and \(R_2=Q_1(v,x_2)\).
	Let \(t_i\) be the number of internal vertices of \(R_i\) in \(R_i \cap Q_2\),
	for \(i=1,2\).
	Suppose, without loss of generality, that \(t_1\leq t_2\).
	Note that \(G-E(H^*)\) has at least \(5+ t_1+t_2 \geq 5 + 2t_1\) isolated vertices.
	Moreover, \(R_2 + P_{3,4}(v,x_4)+P_{4,1} + P_{1,3} + P_{3,2}\) is a cycle in \(G\),
	and by Lemma~\ref{lemma:double-paths}, \(R_1 + P_{3,4}(v,x_3) + Q_2 + P_{4,2}+P_{2,1}\) (see Figure~\ref{fig:z-5})
	can be decomposed into at most \(1 + t_1\) cycles.
	Thus, \(H^*\) can be decomposed into \(2+t_1\leq \lfloor (5+2t_1)/2\rfloor\) cycles.
	Therefore, \(H^*\) is a \((2+t_1)\)-cycle reducing subgraph of \(G\).
	By the minimality of \(G\), \(G-E(H^*)\) is a Haj\'os graph,
	and Since \(H^*\neq G\), by Lemma~\ref{lemma:cycle-reducing}, \(G\) is a Haj\'os graph.
\end{proof}

\section{Concluding remarks}\label{sec:conclusion}

Conjectures~\ref{conj:gallai} and~\ref{conj:hajos} are important problems that have attracted the attention of many researchers.
Although they have been verified for many classes of graphs, 
there still is space for 
new ideas and techniques.
We believe we can extended the technique presented here to be to deal with these conjectures for other classes of graphs,
for example, graphs with treewidth at most~$4$. 

\bibliography{bibliografia}

\end{document}